\DeclareMathAlphabet{\pazocal}{OMS}{zplm}{m}{n}
\newcommand{\LL}{\mathcal{L}}
\newcommand{\Q}{\pazocal{Q}}
\newcommand{\E}{\pazocal{E}}
\newcommand{\I}{\pazocal{I}}
\newcommand{\Pol}{\mathcal{P}}
\newcommand{\T}{\mathcal{T}}
\newcommand{\Set}{\mathcal{S}}
\newcommand{\Curve}{\pazocal{C}}
\def\rank{\mathrm{rank}}
\def\deg {\mathrm{deg\,}}
\newtheorem{theorem}{Theorem}[section]
\newtheorem{proposition}[theorem]{Proposition}
\newtheorem{corollary}[theorem]{Corollary}
\newtheorem{lemma}[theorem]{Lemma}
\newtheorem{remark}[theorem]{Remark}
\newtheorem{example}[theorem]{Example}
\newenvironment{proof}[1][Proof]{\noindent\textit{#1.} }{\hfill$\Box$\newline\medskip}
\author[1,3]{Vladimir Dragovi\'c}
\author[2,3]{Milena Radnovi\'c}
\affil[1]{\textsc{The University of Texas at Dallas, Department for Mathematical Sciences}\newline\texttt{vladimir.dragovic@utdallas.edu}}
\affil[2]{\textsc{The University of Sydney, School of Mathematics and Statistics}\newline\texttt{milena.radnovic@sydney.edu.au}}
\affil[3]{\textsc{Mathematical Institute SANU, Belgrade}}
\date{}
\title{Periodic ellipsoidal billiard trajectories and extremal polynomials}
\begin{document}

\maketitle

\begin{abstract}
A comprehensive study of periodic trajectories of billiards within ellipsoids in $d$-dimensional Euclidean space is presented.
The novelty of the approach is based on a relationship established between periodic billiard trajectories and extremal polynomials on the systems of $d$ intervals on the real line.
By leveraging deep, but yet not widely known results of the Krein-Levin-Nudelman theory of generalized Chebyshev polynomials, fundamental properties of billiard dynamics are proven for any $d$, viz., that the sequences of winding numbers are monotonic.   By employing the potential theory we prove the injectivity of the frequency map.
As a byproduct, for $d=2$ a new proof of the monotonicity of the rotation number is obtained.
The case study of trajectories of small periods $T$, $d\le T\le 2d$ is given.
In particular, it is proven that all $d$-periodic trajectories are contained in a coordinate-hyperplane and that for a given ellipsoid, there is a unique set of caustics which generates $d+1$-periodic trajectories.
A complete catalog of billiard trajectories with small periods is  provided for $d=3$.
\end{abstract}

\section{Introduction}\label{sec:intro}
Our aim in this paper is to develop a strong link between the theory of billiards within quadrics in $d$-dimensional space  and the theory of approximation, in particular the extremal polynomials on the systems of $d$ intervals on the real line. This link appears to be fruitful and enables  us to prove fundamental properties of the billiard dynamics and to provide a comprehensive study of periodic trajectories of the billiards within ellipsoids in the $d$-dimensional Euclidean space.

It is well-known that the billiard systems within  ellipsoids are integrable.
It seems that is also widely accepted that the integrable systems, understood as a quest for the exact solutions, are much less related to the approximation theory than their nonintegrable counterparts. It is worth remarking that these two streams of ideas that are being merged in this paper are rooted back in the first half of the XIX century in the works
of the same person -- Jean Victor Poncelet.

\subsection{Poncelet the engineer and Poncelet the geometer}

Modern algebraic approximation theory, especially extremal polynomials and  continued fraction theory
was established by Chebyshev along with his Sankt Petersburg school in the second half of the XIX century, and their followers in the modern times.
Highlights of these studies included the discovery of the Chebyshev polynomials, their generalizations to the systems of intervals, the points of alternance and the theorem of alternance, continued fractions, to mention a few.  All of this is  employed by us in the present work.

Chebyshev's motivation for these studies was his interest in practical problems: in mechanism theory to estimate the error of the mechanisms which transform linear motion into circular, like \emph{ Watt's complete parallelogram}.
The starting point of Chebyshev's investigation (\cite{Tcheb1852}) was work on the theory of mechanisms of the French military engineer,
professor of mechanics and academician Jean Victor Poncelet.
Poncelet came to the question of rational and linear approximation of the functions $\sqrt{X_2(x)}$ of the form of the square root of quadratic polynomials, and he gave two approaches to the problems he encountered -- one based on  analytical arguments and the second one based on geometric considerations.

Chebyshev learnt about this work of Poncelet during his trip abroad in 1852, although they did not meet in person.
However, Chebyshev did meet Cayley, who was at that time interested in another problem which originated again from Poncelet, this time -- Poncelet the geometer.
Upon return, Chebyshev described Poncelet  as a ``well-known scientist in practical mechanics'' (see \cite{Tcheb1852}).
Nowadays J.~V.~Poncelet is known first of all as one of the major geometers of the XIX century.

\subsection{Cayley's condition for the Poncelet Theorem}\label{sec:def}

Let $C$ and $\Gamma$ be two conics in the projective plane.
The question of interest is whether there  exists an $n$-polygon inscribed in $\Gamma$ and circumscribed about $C$.
The Poncelet theorem (\cite{Poncelet1822}, see also \cites{LebCONIQUES, GrifHar1978, DragRadn2011book, FlattoBOOK}) states that if such a polygon exists, there are infinitely many such polygons, and any point of the boundary $\Gamma$ is a vertex of one of them.

Denote by the same letters, $C$ and $\Gamma$, the symmetric $3\times3$ matrices such that the conics are given by the equations
$\langle Cz, z\rangle=0$ and $\langle\Gamma z, z\rangle=0$ in  projective coordinates.
Let $d_3(x)=\det (C + x \Gamma)$ be  the discriminant of the conic $C + x \Gamma = 0$ from the pencil generated by $\Gamma$ and $C$.
For $C$ and $\Gamma$ in a general position, $d_3$ is a cubic polynomial with no multiple roots.
Cayley (\cite{Cayley1854}) reduced the question of the existence of an $n$-polygon inscribed in $\Gamma$ and circumscribed about $C$ to the question whether the points $(0, \pm\sqrt{d_3(0)})$ are of order $n$ on the cubic curve $y^2=d_3(x)$.

If one analyzes carefully the approaches to the last question both in classics (see for example \cite{LebCONIQUES}) or in contemporary texts (see \cites{GrifHar1978,DragRadn2006jmpa,DragRadn2011book}), one may see that it reduces further to the existence of polynomials $q(x)$ and $p(x)$ of degrees $[\frac{n-1}2]-1$ and $[\frac{n}2]$ such that the function
$
\varphi(x)=q(x)\sqrt{d_3(x)} + p(x)
$
has a zero of multiplicity $n$ at $x=0$, i.e.
$$
\varphi(0)=\varphi'(0)=\dots=\varphi^{(n-1)}(0)=0.
$$
For $n=2m$, we get from the expanded expression:
$$
\varphi(x)=\sqrt{d_3(x)}(a_0 x^{m-2} + a_1 x^{m-3} + \dots + a_{m-2}) +
    (b_0 x^m + b_1 x^{m-1} + \dots + b_m)
$$
that there is an $n$-polygon inscribed in $\Gamma$ and circumscribed about $C$ if and only if it is possible to find a non-trivial set of coefficients $a_0$, $a_1$, \dots\ such that:
$$
\begin{array}{ccccccccc}
    a_0 C_3     & + & a_1 C_4     & + & \dots & + & a_{m-2} C_{ m+1} & = & 0 \\
    a_0 C_4     & + & a_1 C_5     & + & \dots & + & a_{m-2} C_{ m+2} & = & 0 \\
    \dots  \\
    a_0 C_{m+1} & + & a_1 C_{m+2} & + & \dots & + & a_{m-2} C_{2m-1} & = & 0,
\end{array}
$$
where
$\sqrt{d_3(x)} = C_0 + C_1x + C_2 x^2 + C_3 x^3 + \dots.$
Finally, for $n=2m$, we obtain \emph{the Cayley's condition}
\begin{equation}\label{eq:Cayodd} \left | \begin{array}{llll}
     C_3     & C_4     & \dots & C_{m+1} \\
     C_4     & C_5     & \dots & C_{m+2} \\
      & & \dots                          \\
     C_{m+1} & C_{m+2} & \dots & C_{2m-1}
     \end{array} \right |=0.
     \end{equation}
Similarly, for $n = 2m+1$, we obtain:
\begin{equation}\label{eq:Cayleyeven} \left | \begin{array}{llll}
     C_2     & C_3     & \dots & C_{m+1} \\
     C_3     & C_4     & \dots & C_{m+2} \\
      & & \dots                         \\
     C_{m+1} & C_{m+2} & \dots & C_{2m}
     \end{array} \right |=0.
\end{equation}

\subsection{Pad\'e approximation}

Halphen observed the significance of the polynomials $p$, $q$ and their relationship to the important questions of rational approximation of elliptic functions 130 years ago, while he was developing further the theory of rational approximation and of continued fractions of square roots $\sqrt {X_4(x)}$ of polynomials of degree up to four. This theory  was initiated by Abel and Jacobi.

Let us mention that so-called Pad\'e approximants play important role in the theory of rational approximations.
Consider a power series
$$
f(x)= C_0 + C_1x + C_2 x^2 + C_3 x^3 + \dots,
$$
and non-negative integers $k$, $l$.
\emph{A Pad\'e approximant of type $(k,l)$ of $f$} is a pair of polynomials $p_k$, $q_l$ such that
$$
\deg p_k\le k,
\quad
\deg q_l\le l,
\quad
(q_lf-p_k)(x)=\mathcal O(x^{k+l+1}).
$$
The index $(k,l)$ is said to be \emph{normal} for the Pad\'e table if $\deg p_k=k$ and $\deg q_l= l$.
The normality criterion can be reformulates as $H_{k,l}H_{k, l+1}H_{k+1,l}\ne 0$, with  \emph{the Hadamard-Hankel determinants} denoted as:
$$ H_{k,l}:=\left | \begin{array}{llll}
     C_{k-l+1}     & C_{k-l+2}    & \dots & C_{k} \\
     C_{k-l+2}     & C_{k-l+3}     & \dots & C_{k+1} \\
      & & \dots                         \\
     C_{k} & C_{k+1} & \dots & C_{k+l-1}
     \end{array} \right |.
$$

Halphen established a relationship between the Poncelet polygons and continued fractions and approximation theory.
We provide in our terminology his result from  \cite{Hal1888}*{Part 2, page 600},  which was until recently mostly forgotten:

\begin{theorem}[Halphen (1888)]\label{th:halphen}
There exists an $n$-gon inscribed in $\Gamma$ and circumscribed
about $C$ if and only if the elliptic function $y=\sqrt{d_3(x)}$ satisfies the following:
\begin{itemize}
\item [(a)] for $n=2m$ has a $(m+1,m-2)$  Pad\'e approximant with polynomials $p_m$ and $q_{m-2}$ of degrees $m, m-2$ respectively, which is not normal:

    $$ p_m(x)+q_{m-2}(x)\sqrt{d_3(x)} =\mathcal{O}(x^{2m}). $$
 \item [(b)] for $n=2m+1$ has a  $(m+1, m-1)$  Pad\'e approximant with polynomials $p_m$ and $q_{m-1}$ of degrees $m, m-1$ respectively, which is not normal:

    $$ p_m(x)+q_{m-1}(x)\sqrt{d_3(x)} =\mathcal{O}(x^{2m+1}).$$
    \end{itemize}
    \end{theorem}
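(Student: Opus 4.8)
The geometric heart of the statement --- turning the Poncelet closure into algebra --- has already been done in Section~\ref{sec:def}: the existence of an $n$-gon inscribed in $\Gamma$ and circumscribed about $C$ was reduced there to the existence of a nontrivial pair of polynomials $q,p$ with $\deg q\le[\frac{n-1}2]-1$ and $\deg p\le[\frac n2]$ for which $\varphi=q\sqrt{d_3}+p$ vanishes to order $n$ at $x=0$, and then encoded in Cayley's condition \eqref{eq:Cayodd} (for $n=2m$) or \eqref{eq:Cayleyeven} (for $n=2m+1$). In general position $d_3(0)=\det C\ne0$, so $\sqrt{d_3(x)}=C_0+C_1x+C_2x^2+\dots$ is a genuine power series with $C_0\ne0$ and everything above is well defined. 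The plan, then, is simply to read this reduction as a statement about a non-normal Padé approximant of $f=\sqrt{d_3}$, matching the degrees and the order of contact.

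Take $n=2m$ (the case $n=2m+1$ is identical with the obvious changes). Put $P:=-p$; then $\varphi=\mathcal O(x^{2m})$ becomes $qf-P=\mathcal O(x^{2m})$. Since $\deg q\le m-2$, $\deg P\le m$, and $2m=(m+1)+(m-2)+1$, the pair $(P,q)$ satisfies the defining relations of a Padé approximant of $f$ of type $(m+1,m-2)$; recalling that the Padé approximant of a given type is unique as a rational function, $P/q$ \emph{is} that approximant, and a representative of its numerator has degree $\le m<m+1$, so the index $(m+1,m-2)$ is not normal. Conversely, if the type-$(m+1,m-2)$ approximant has a representation $(P,q)$ with $\deg P\le m$, then $q\not\equiv0$ (else $P=\mathcal O(x^{2m})$ forces $P\equiv0$), and $(-P,q)$ is a nontrivial pair as required by the reduction. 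Hence an $n$-gon exists $\iff$ the type-$(m+1,m-2)$ Padé approximant of $\sqrt{d_3}$ has deficient numerator degree $m$, which is exactly assertion~(a). For $n=2m+1$, $2m+1=(m+1)+(m-1)+1$ gives in the same way a type-$(m+1,m-1)$ approximant with numerator of degree $\le m$, which is assertion~(b).

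To connect this with the normality criterion $H_{k,l}H_{k,l+1}H_{k+1,l}\ne0$ explicitly, I would also compare determinants directly: the $(i,j)$ entry of the array in \eqref{eq:Cayodd} is $C_{i+j+1}$, $1\le i,j\le m-1$, and the $(i,j)$ entry of $H_{k,l}$ is $C_{k-l+i+j-1}$, $1\le i,j\le l$, so taking $l=m-1$, $k=m+1$ identifies \eqref{eq:Cayodd} with $H_{m+1,m-1}$, and likewise \eqref{eq:Cayleyeven} with $H_{m+1,m}$. As $H_{m+1,m-1}$ (resp.\ $H_{m+1,m}$) is one of the three determinants in the normality criterion for the index $(m+1,m-2)$ (resp.\ $(m+1,m-1)$), Cayley's condition is literally the non-normality of that index, which gives a second, purely determinantal proof of the equivalence.

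The step I expect to require the most care is the bookkeeping of \emph{exact} degrees. In the forward direction this is automatic, but to get the precise form of assertion~(a) in the converse direction one must know that non-normality of the index $(m+1,m-2)$ occurs specifically through a drop of the numerator degree to $m$, with the denominator still of degree $m-2$ --- i.e.\ that among the three Hankel determinants it is precisely $H_{m+1,m-1}$ that vanishes --- rather than through a degenerate denominator or an accidentally higher order of contact. This is where the general-position hypotheses on $C$ and $\Gamma$ (so that $d_3$ is squarefree and $d_3(0)\ne0$), and, if needed, the minimality of $n$, keep the Padé table and the Cayley determinants of the expected size. I would also check the index identifications on the smallest cases, $m=1$ ($n=3$) and $m=2$ ($n=4$), where the Cayley ``determinant'' collapses to the single entry $C_2$, respectively $C_3$.
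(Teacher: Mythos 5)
Your proposal is correct and follows essentially the route the paper itself intends: Theorem \ref{th:halphen} is stated as Halphen's classical result without a separate proof, its content being exactly the Section \ref{sec:def} reduction (existence of a nontrivial pair $(q,p)$ with contact of order $n$ at $x=0$) reread as a Pad\'e approximant of $\sqrt{d_3}$ of type $(m+1,m-2)$, resp.\ $(m+1,m-1)$, with deficient numerator degree, together with the identification of the Cayley determinants \eqref{eq:Cayodd} and \eqref{eq:Cayleyeven} with $H_{m+1,m-1}$ and $H_{m+1,m}$ made in the remark following the theorem. Your degree bookkeeping and determinant identification coincide with the paper's, and you rightly flag the exact-degree refinement (which the paper also leaves to the general-position hypothesis) as the only delicate point.
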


    \begin{remark} The Cayley condition is $H_{m+1, m-1}=0$ for $n=2m$ and $H_{m+1,m}=0$ for $n=2m+1$.
     \end{remark}

\subsection{The overview of the results of the paper}

In the modern literature about Poncelet polygons and their higher dimensional generalizations, the polynomials $p$, $q$ have explicitly  appeared in several places, see for example \cites{DragRadn2006jmpa, DragRadn2011book}, but there they were not specifically emphasized and studied.
As a recent exception, we should mention \cite{RR2014}, where certain polynomial representations were studied and three important conjectures were formulated, see Remark \ref{rem:ramirez}.
On the other hand the extremal nature of the polynomials has not been observed until the present work, where we establish a fundamental connection between periodic ellipsoidal billiard trajectories and related extremal polynomials.
From their interplay we obtain essential results concerning the billiard frequency maps: the monotone nature of the winding numbers in Theorem \ref{th:winding} and the one-to-one property of the frequency map in Theorem \ref{th:1-1a}.

Periodic trajectories of ellipsoidal billiards and the corresponding frequency maps were also studied in \cites{PT2011, CRR2011,CRR2012} while yet another direction of the development of Halphen's ideas towards hyperelliptic functions was suggested in \cite{Drag2009}.
Ellipsoidal billiards have been  intensively studied in various frameworks in recent years, see for example \cites{Wiersig2000,DelshamsFedRR2001,Fed2001,BDFRR2002,WDullin2002,AbendaFed2004,KhTab2009,AbendaGrin2010,JovJov2014,Radn2015,IT2017} and references therein.

\smallskip

This paper is organized as follows.
In Section \ref{sec:hyper} we revisit the connection between the periodic trajectories of ellipsoidal billiards and finite order divisors on the Jacobians of hyper-elliptic curves.
We recall the analytic conditions for trajectories periodic in  elliptic coordinates (Theorem \ref{th:genCayley}) and show that periodicity is equivalent to the existence of polynomial solutions of certain functional equations, see Proposition \ref{prop:cayley-poly}.
Then in Theorem \ref{th:uslov} we give precise algebro-geometric conditions for periodicity involving the types of caustics.
In Section \ref{sec:low}, the case study of the trajectories of small periods $T$, $d\le T\le 2d$ is initiated.
It is proven in Theorem \ref{th:d} that all $d$-periodic trajectories are contained in a coordinate-hyperplane and that the trajectories of small periods must have certain number of pairs of caustics of the same type, see Theorem \ref{th:d+k}.
In particular, we proved in Theorem \ref{th:cayley} the uniqueness of the types of caustics for $(d+1)$-periodic trajectories.
Section \ref{sec:poly} explores in depth the connection with Krein-Levin-Nudelman theory of the generalized Chebyshev polynomials.
We prove the fundamental properties of  billiard dynamics for any dimension $d$ by answering positively all the conjectures from \cite{RR2014}: in Theorem \ref{th:winding} we prove that the winding numbers are strictly decreasing and derive the exact relationship between them and the signature, and in Corollary \ref{cor:Q} that all the zeroes of the corresponding polynomials $\hat{q}_{n-d}$ are real.
In Section \ref{sec:freq},
 we show that the frequency maps are injective over rational values, see Theorem \ref{th:signature-caustics}. 
  By employing the potential theory we prove injectivity of the frequency map -- see Theorem \ref{th:1-1a}.
As a byproduct, we obtain a new proof of the monotonicity of the rotation number in the $2$-dimensional case.
In Section \ref{sec:examples}, we provide a complete catalog of billiard trajectories with small periods $T\le6$ in the three-dimensional case and discuss their properties.

\section{Periodic trajectories and finite order divisors on hyper-elliptic curves}
\label{sec:hyper}

\subsection{Billiards within ellipsoids. Winding numbers. Elliptic periods.}

If the conics $C$ and $\Gamma$ from Section \ref{sec:def} are assumed to be confocal conics in the Euclidean plane, then the Poncelet polygons transform to periodic billiard trajectories within $\Gamma$.
Thus higher-dimensional generalizations of Poncelet polygons are related to periodic trajectories of billiards within quadrics.
In this section, we will discuss algebro-geometric conditions for periodicity of billiard trajectories within an ellipsoid in the $d$-dimensional space.

We note that a variety of higher-dimensional analogues of Poncelet polygons were introduced in \cites{DarbouxSUR, CCS1993, DragRadn2006jmpa, DragRadn2008}, see also \cite{DragRadn2011book} for a systematic exposition and bibliography therein.
Corresponding Cayley-type conditions were derived by the present authors in \cites{DragRadn1998a, DragRadn1998b, DragRadn2004, DragRadn2008}, see also \cite{DragRadn2011book}.

Let an ellipsoid be given by:
$$
\E\ :\ \frac{x_1^2}{a_1}+\dots+\frac{x_d^2}{a_d}=1,
\quad
0<a_1<a_2<\dots<a_d.
$$
The family of quadrics confocal with $\E$ is:
\begin{equation}\label{eq:confocald}
\Q_{\lambda}(x)=\frac {x_1^2}{a_1-\lambda}+\dots + \frac
{x_d^2}{a_d-\lambda}=1.
\end{equation}

For a point given by its Cartesian coordinates $x=(x_1, \dots, x_d)$, the Jacobi elliptic coordinates $(\lambda_1,\dots, \lambda_d)$ are defined as the solutions of the equation in $\lambda$: $ \Q_{\lambda}(x)=1$.
The correspondence between the elliptic and Cartesian coordinates is not injective, since points symmetric with respect to the coordinate hyper-planes have equal elliptic coordinates.

The Chasles theorem states that almost any line $\ell$ in the Euclidean space $\mathbf E^d$ is tangent to exactly $d-1$ non-degenerate 
quadrics from the confocal family.
Moreover, any line $\ell'$ obtained by a billiard reflection off a quadric from the confocal family \eqref{eq:confocald}
is touching the same $d-1$ quadrics, and consequently all segments of a given billiard trajectory within a quadric will by tangent to the same set of $d-1$ quadrics confocal with the boundary.
Those $d-1$ quadrics are called \emph{caustics} of the trajectory.  
The existence of caustics is a geometric manifestation of integrability of billiards within quadrics.
If those quadrics are $\Q_{\alpha_1}$, \dots, $\Q_{\alpha_{d-1}}$, then
the Jacobi elliptic coordinates $(\lambda_1,\dots, \lambda_d)$ of any point on $\ell$ satisfy the
inequalities $\Pol(\lambda_j)\ge 0$, $j=1,\dots,d$, where
$$
\Pol(x)=(a_1-x)\dots(a_d-x)(\alpha_1-x)\dots(\alpha_{d-1}-x).
$$
Let $b_1<b_2<\dots<b_{2d-1}$ be constants such that
$$\{b_1,\dots,b_{2d-1}\}=\{a_1,\dots,a_d,\alpha_1,\dots,\alpha_{d-1}\}.$$
Here, clearly, $b_{2d-1}=a_d$.
The possible arrangements of the parameters $\alpha_1$, \dots, $\alpha_{d-1}$ of the caustics and the parameters $a_1$, \dots, $a_d$ of the confocal family can be obtained from the following lemma.

\begin{lemma}[\cite{Audin1994}]\label{lemma:audin}
If $\alpha_1<\alpha_2< \dots<\alpha_{d-1}$, then
$\alpha_j\in\{b_{2j-1},b_{2j}\}$, for $1\le j\le d-1$.
\end{lemma}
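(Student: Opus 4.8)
The plan is to translate the analytic constraint $\Pol(\lambda_j)\ge 0$, which holds at every point of every line tangent to the caustics, into a combinatorial statement about the interlacing of the numbers $a_1,\dots,a_d$ and $\alpha_1,\dots,\alpha_{d-1}$. First I recall two standard facts: the ordered Jacobi elliptic coordinates of a point always satisfy $\lambda_j\in[a_{j-1},a_j]$ for $j=1,\dots,d$ (with the convention $a_0=-\infty$), and along a generic line $\ell$ tangent to $\Q_{\alpha_1},\dots,\Q_{\alpha_{d-1}}$ every coordinate $\lambda_j$ is non-constant, since a line contained in a confocal quadric must be one of its rulings, which is a non-generic condition. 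Hence, as the point runs along $\ell$, the coordinate $\lambda_j$ sweeps a non-degenerate interval contained in $[a_{j-1},a_j]\cap\{\,\Pol\ge 0\,\}$, and therefore
$$
(a_{j-1},a_j)\cap\{\,x : \Pol(x)>0\,\}\neq\varnothing,\qquad j=1,\dots,d.
$$
Since the conclusion of the lemma concerns only the ordering of the $a_i$ and the $\alpha_k$, which is locally constant on the set of admissible caustic configurations, it suffices to argue for generic caustics, for which such a generic $\ell$ — say a non-periodic trajectory, dense in its Liouville torus — exists.

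Next I analyse both sides of the displayed relation. The polynomial $\Pol$ has degree $2d-1$, leading coefficient $-1$, and the $2d-1$ simple real roots $b_1<\dots<b_{2d-1}$, so its positivity set has exactly $d$ connected components,
$$
K_1=(-\infty,b_1),\quad K_2=(b_2,b_3),\quad\dots,\quad K_d=(b_{2d-2},b_{2d-1}),
$$
recalling that $b_{2d-1}=a_d$. Thus the $d$ intervals $(a_0,a_1),\dots,(a_{d-1},a_d)$ each meet at least one of the $d$ components $K_1,\dots,K_d$. Two distinct intervals $(a_{j-1},a_j)$ and $(a_{j'-1},a_{j'})$ with $j<j'$ cannot meet the same $K_i$: otherwise $K_i$ would contain points on both sides of $a_j$, hence $a_j$ itself, contradicting $\Pol(a_j)=0$. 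So the assignment sending $(a_{j-1},a_j)$ to the component it meets is an injection between two ordered $d$-element sets, hence an order-preserving bijection, hence the identity; therefore $(a_{j-1},a_j)\cap K_j\neq\varnothing$ for all $j$, which gives
$$
b_{2j-2}<a_j\quad\text{and}\quad a_{j-1}<b_{2j-1},\qquad j=1,\dots,d,
$$
with the convention $b_0:=-\infty$.

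It remains to read off the statement. Combining $b_{2j-2}<a_j$ with the inequality $a_j=a_{(j+1)-1}<b_{2(j+1)-1}=b_{2j+1}$ coming from index $j+1$, we obtain $b_{2j-2}<a_j<b_{2j+1}$, so $a_j\in\{b_{2j-1},b_{2j}\}$ for $1\le j\le d-1$; together with $a_d=b_{2d-1}$ this means that each block $\{b_{2j-1},b_{2j}\}$, $1\le j\le d-1$, contains exactly one of the $a_i$ and hence exactly one of the $\alpha_k$. Since the $\alpha_k$ are increasing and the blocks are ordered from left to right, the $\alpha_k$ lying in the $j$-th block must be $\alpha_j$, i.e. $\alpha_j\in\{b_{2j-1},b_{2j}\}$, as claimed.

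The step I expect to be the main obstacle is the very first one: rigorously justifying that each interval $(a_{j-1},a_j)$ genuinely meets $\{\,\Pol>0\,\}$. This rests on the union of all lines tangent to the $d-1$ caustics being a $d$-dimensional subset of $\mathbf E^d$ — a dimension count, since lines form a $(2d-2)$-parameter family, tangency to $d-1$ confocal quadrics removes $d-1$ parameters, and points on the resulting family add one — so that in elliptic coordinates this set has non-empty interior and each $\lambda_j$ genuinely oscillates on it; one also has to be careful that it suffices to treat generic caustic configurations. Everything after that is elementary interlacing combinatorics.
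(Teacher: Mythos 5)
The paper does not actually prove this lemma --- it is quoted from Audin's paper \cite{Audin1994} with no argument given --- so your proposal has to be judged on its own merits rather than against an in-paper proof. Its combinatorial half is correct and complete: $\Pol$ has degree $2d-1$, negative leading coefficient and simple roots, so its positivity set is exactly $K_1\cup\dots\cup K_d$ as you say; distinct intervals $(a_{j-1},a_j)$ cannot meet the same component; and although an interval could a priori meet several components (so ``the component it meets'' is not well defined as stated), your injectivity-plus-order argument applies to any choice of met component and forces the assignment to be the identity, which is all you use. The passage from $b_{2j-2}<a_j<b_{2j+1}$ to $a_j\in\{b_{2j-1},b_{2j}\}$ and then to $\alpha_j\in\{b_{2j-1},b_{2j}\}$ is also fine. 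This is in the same spirit as Audin's own argument (positivity of $\Pol$ on the ranges of the elliptic coordinates), which the paper simply cites.

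The genuine soft spot is exactly the one you flag, and it has two parts. First, a constant coordinate $\lambda_j\equiv c$ does not only occur for rulings of non-degenerate confocal quadrics: the case $c=a_i$ is possible and corresponds to $\ell$ lying in the coordinate hyperplane $x_i=0$, i.e.\ in a degenerate member of the family, which your dichotomy omits. Second, the reduction ``it suffices to argue for generic caustics'' is asserted but not justified: you would need generic admissible configurations, with the same ordering, arbitrarily close to an arbitrary admissible one, and nothing in your sketch (Liouville tori, dimension count) establishes that. Both issues disappear if you use the lemma's standing hypothesis that the $2d-1$ numbers $a_1,\dots,a_d,\alpha_1,\dots,\alpha_{d-1}$ are pairwise distinct: a line lying on a confocal quadric $\Q_c$ makes $\lambda=c$ a root of multiplicity at least two of the degree-$(d-1)$ tangency polynomial (this is visible in the paper itself, where trajectories along generatrices correspond to $\alpha_1=\alpha_2$, cf.\ Corollary \ref{cor:4hyp}), and a line lying in $x_i=0$ is, by Chasles applied inside that hyperplane, tangent to at most $d-2$ distinct non-degenerate quadrics of the family; in either case the line cannot have $d-1$ pairwise distinct non-degenerate caustics. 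Hence \emph{every} line with the given caustics has all $\lambda_j$ non-constant, your displayed intersection property holds for each $j$, and no genericity or perturbation step is needed. With that patch the proof is complete.
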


If $\ell$ is the line containing a segment of a billiard trajectory within $\E$, then $b_1>0$.

Along a billiard trajectory, the Jacobi elliptic coordinates satisfy:
$$
b_0=0\le\lambda_1\le b_1,
\quad
b_2\le\lambda_2\le b_3,
\quad\dots,\quad
b_{2d-2}\le\lambda_{d}\le b_{2d-1}.
$$
Moreover, along the trajectory, each Jacobi coordinate $\lambda_j$ fills the whole interval $[b_{2j-2},b_{2j-1}]$, with local extreme points being only the end-points of the interval.
Thus, $\lambda_j$ takes values $b_{2j-2}$ and $b_{2j-1}$ alternately and changes monotonously between them.
Let $\T$ be a periodic billiard trajectory and denote by $m_j$ the number of its points where $\lambda_j=b_{2j-2}$.
Based on the previous discussion, $m_j$ is also the number of the points where $\lambda_j=b_{2j-1}$.

Notice that the value $\lambda_1=0$ corresponds to an impact with the boundary ellipsoid $\E$,
value $\lambda_j=\alpha_{k}$ corresponds to a tangency with the caustic $\Q_{\alpha_k}$, and
$\lambda_j=a_k$ corresponds to an intersection with the coordinate hyperplane $x_k=0$.
Since each periodic trajectory intersects any hyperplane even number of times, we get that $m_j$ must be even whenever $b_{2j-2}$ or $b_{2j-1}$ is in the set $\{a_1,\dots, a_d\}$.

Following \cite{RR2014}, we denote $m_0=n$, $m_d=0$, and call
$(m_0, m_1, \dots, m_{d-1})$
\emph{the winding numbers} of a given periodic billiard trajectory with period $n$. In addition we introduce \emph{the elliptic period} $\tilde n$ as the number of impacts after which the trajectory closes in the Jacobi elliptic coordinates. If
$$k=\gcd(m_0, m_1, \dots, m_{d-1}),$$ then $\tilde n=m_0/k$;
in addition $\tilde m_i=m_i/k$ are \emph{the elliptic winding numbers}.

\subsection{Hyperelliptic curves and periodic billiard trajectories}

We will use the following notation for the hyperelliptic curve and points on it:
\begin{equation}\label{eq:hcurve}
\Curve\ :\
y^2=(a_1-x)\dots(a_d-x)(\alpha_1-x)\dots(\alpha_{d-1}-x).
\end{equation}
Denote by $P_{b}(b,0)$, $P_{\infty}(\infty,\infty)$ its Weierstrass points,
$$
b\in\{a_1,\dots,a_d,\alpha_1,\dots,\alpha_d\}.
$$
For a divisor $D$ on the curve, we denote:
\begin{gather*}
\LL(D)=\left\{ f \text{ -- meromorphic function on } \Curve \mid (f)+D\ge 0 \right\},
 \\
\Omega(D)=\left\{ \omega \text{ -- meromorphic differential on } \Curve \mid (\omega)\ge D \right\}.
\end{gather*}
The Riemann-Roch theorem states that
$$
\dim\LL(D)=\deg D- g+\dim\Omega(D)+1,
$$
where $g$ is the genus of the curve.
In our case, $g=d-1$.

Now we are going to recall a Cayley-type condition for periodicity of billiard trajectories within an ellipsoid.

\begin{theorem}[\cites{DragRadn2004, DragRadn2011book}]\label{th:genCayley}
Consider the billiard within $\E$ and its trajectory with the caustics
$\Q_{\alpha_1}$, \dots, $\Q_{\alpha_{d-1}}$.
Denote
$$
\sqrt{\Pol(x)} = C_0 + C_1x + C_2 x^2 + C_3 x^3 + \dots.
$$

The trajectory is periodic with the elliptic period $m$ if and only if the following condition $C(m, d)$ is satisfied:
\begin{equation}\label{eq:even} \rank \left ( \begin{array}{llll}
     C_{d+1}     & C_{d+2}     & \dots & C_{m+1} \\
     C_{d+2}     & C_{d+3}     & \dots & C_{m+2} \\
      & & \dots                          \\
     C_{m+d-1} & C_{m+d} & \dots & C_{2m-1}
     \end{array} \right )<m-d+1.
     \end{equation}
\end{theorem}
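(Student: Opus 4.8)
The plan is to chain together three reformulations of periodicity: a relation of finite order in the Jacobian $\Jac(\Curve)$; the existence of a meromorphic function on $\Curve$ with a prescribed principal divisor; and, finally, the rank condition $C(m,d)$ itself. First I would invoke the connection between periodic billiard trajectories and finite-order divisors recalled in this section (see \cites{DragRadn2004,DragRadn2011book}): the trajectory with caustics $\Q_{\alpha_1},\dots,\Q_{\alpha_{d-1}}$ is periodic with elliptic period $m$ if and only if $m\bigl(P_0^{+}-P_0^{-}\bigr)\sim 0$ on $\Curve$, where $P_0^{\pm}=(0,\pm C_0)$ are the two points of $\Curve$ over the boundary ellipsoid $\E$; here $C_0=\sqrt{\Pol(0)}\neq 0$ because all roots of $\Pol$ are positive, as $b_1>0$. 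The doubling reflects the two-to-one correspondence between elliptic and Cartesian coordinates. Since the function $x$ has divisor $P_0^{+}+P_0^{-}-2P_{\infty}$, this relation is equivalent to $2mP_0^{+}\sim 2mP_{\infty}$, that is, to the existence of a nonzero $f\in\LL\bigl(2mP_{\infty}-2mP_0^{+}\bigr)$, equivalently a meromorphic function $f$ on $\Curve$ with divisor $(f)=2mP_0^{+}-2mP_{\infty}$; by the Riemann--Roch theorem this is the assertion $\dim\LL\bigl(2mP_{\infty}-2mP_0^{+}\bigr)=1$.

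Next I would reduce the existence of such an $f$ to a Pad\'e-type interpolation problem at $x=0$. Since $f$ has its only pole at $P_{\infty}$, we may write $f=p(x)+q(x)y$ with $p,q$ polynomials. Using $\mathrm{ord}_{P_{\infty}}x=-2$ and $\mathrm{ord}_{P_{\infty}}y=-(2d-1)$, and observing that $2\deg p$ is even while $2\deg q+2d-1$ is odd, so no cancellation can occur at $P_{\infty}$, the pole order of $f$ at $P_{\infty}$ equals $\max\{2\deg p,\,2\deg q+2d-1\}$; requiring it to be $2m$ forces $\deg p=m$ and $\deg q\leq m-d$. With the expansion $y=\sum_{k\geq 0}C_kx^k$ in the local coordinate $x$ at $P_0^{+}$, the vanishing of $f$ to order $2m$ there reads $p(x)+q(x)\sqrt{\Pol(x)}=\mathcal O(x^{2m})$. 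Conversely, any such $p,q$ with $q\not\equiv 0$ yield $(f)=2mP_0^{+}-2mP_{\infty}$ automatically: $f\not\equiv 0$ (else $\Pol$ would be a perfect square, contradicting that its roots are simple), $f$ has no affine pole and pole order $N\leq 2m$ at $P_{\infty}$, and since $f$ vanishes to order $\geq 2m$ at $P_0^{+}$ while $\deg(f)=0$, necessarily $N=2m$ with $P_0^{+}$ the unique zero, of order exactly $2m$. Thus periodicity with elliptic period $m$ is equivalent to the solvability of
\[ p(x)+q(x)\sqrt{\Pol(x)}=\mathcal O(x^{2m}),\qquad \deg p\leq m,\quad \deg q\leq m-d,\quad q\not\equiv 0. \]

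Finally I would translate this into the rank condition. In this homogeneous linear system the $m+1$ equations coming from the coefficients of $x^{0},\dots,x^{m}$ can always be satisfied by choosing the coefficients of $p$, so what remains are the $m-1$ equations saying that the coefficients of $x^{m+1},\dots,x^{2m-1}$ in $q(x)\sqrt{\Pol(x)}$ vanish; these involve only the $m-d+1$ coefficients of $q$. Writing $q(x)=\sum_{s=0}^{m-d}q_sx^s$, the equation from the coefficient of $x^{m+1+r}$, $0\leq r\leq m-2$, is $\sum_{s=0}^{m-d}q_sC_{m+1+r-s}=0$; substituting the reversed polynomial $\hat q(x)=x^{m-d}q(1/x)$, i.e. $\hat q_t=q_{m-d-t}$, turns it into $\sum_{t=0}^{m-d}\hat q_tC_{d+1+r+t}=0$. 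This says precisely that the vector $(\hat q_0,\dots,\hat q_{m-d})^{\top}$ lies in the kernel of the $(m-1)\times(m-d+1)$ matrix appearing in \eqref{eq:even}. A nonzero such $q$, hence a nonzero $\hat q$, exists if and only if that matrix does not have full column rank, i.e. its rank is less than $m-d+1$. Combining this with the two previous reductions yields exactly the condition $C(m,d)$.

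The step I expect to be the main obstacle is the first one: establishing that periodicity with elliptic period $m$ is governed by the single divisor relation $m(P_0^{+}-P_0^{-})\sim 0$. This rests on the integrable structure of the billiard — the linearisation of the billiard map as a translation on $\Jac(\Curve)$ and the identification of the translation vector, modulo the combinatorics of the turning points $b_{2j-2},b_{2j-1}$ recorded by the winding numbers $m_j$, with the Abel image of $P_0^{+}-P_0^{-}$ — and it is exactly the material I would import from \cites{DragRadn2004,DragRadn2011book}. Once it is in place, Steps 2 and 3 are a direct linear-algebra computation on the hyperelliptic curve $\Curve$.
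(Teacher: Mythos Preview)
Your proposal is correct. Note, however, that the paper does not actually supply a proof of this theorem: it is stated with the citation \cites{DragRadn2004,DragRadn2011book} and merely recalled. What the paper \emph{does} prove is the immediately following Proposition~\ref{prop:cayley-poly}, namely the equivalence between the rank condition $C(m,d)$ and the Pad\'e--type relation $p_m(x)+q_{m-d}(x)\sqrt{\Pol(x)}=\mathcal O(x^{2m})$. That argument is precisely your Step~3, carried out with the same bookkeeping (the coefficients of $x^{m+1},\dots,x^{2m-1}$ give the homogeneous system whose matrix is the one in \eqref{eq:even}).

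Your Steps~1--2 go beyond what the paper writes out: you reconstruct the input from \cites{DragRadn2004,DragRadn2011book} by identifying elliptic $m$--periodicity with $m(P_0^{+}-P_0^{-})\sim0$, rewriting this as $2mP_0^{+}\sim 2mP_{\infty}$ via $(x)=P_0^{+}+P_0^{-}-2P_{\infty}$, and then producing the function $f=p(x)+q(x)y$ by a pole/zero count at $P_{\infty}$ and $P_0^{+}$. The degree--parity argument at $P_{\infty}$ (even $2\deg p$ versus odd $2\deg q+2d-1$) and the ``zeros~$\ge$~poles forces equality'' step are exactly the right way to pin down $\deg p=m$, $\deg q\le m-d$, and to rule out spurious zeros; this is the standard hyperelliptic computation underlying the cited references. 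So your route and the paper's are the same where they overlap, and you have correctly filled in the part the paper outsources.
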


Now we are going to reformulate above Cayley-type criterion in a form
of the Pad\'e approximation.

\begin{proposition}\label{prop:cayley-poly}
The condition $C(m, d)$ is satisfied if and only if there exist a pair of polynomials $p_m$ and $q_{m-d}$ of degree $m$ and $m-d$ respectively such that:
$$p_m(x)+q_{m-d}(x)\sqrt{\Pol(x)}=\mathcal O(x^{2m}).$$
\end{proposition}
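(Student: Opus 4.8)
The plan is to transform both sides of the claimed equivalence into the \emph{same} homogeneous linear system for the coefficients of $q_{m-d}$, and then to recognise that solvability of that system is exactly the rank drop in \eqref{eq:even}. Throughout, write $f(x):=\sqrt{\Pol(x)}=\sum_{k\ge 0}C_k x^k$; since $\Pol(0)=a_1\cdots a_d\,\alpha_1\cdots\alpha_{d-1}>0$ (recall $b_1>0$ along a billiard trajectory), this is a genuine power series with $C_0\ne 0$.

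\textbf{Reduction to a linear system.} First I would note that, for a \emph{given} polynomial $q$ with $\deg q\le m-d$, the coefficients of $1,x,\dots,x^m$ in $p(x)+q(x)f(x)$ can be made to vanish by one and only one choice of $p$ with $\deg p\le m$, namely $p$ is minus the sum of the terms of degree $\le m$ of $qf$. Hence the existence of a pair $(p_m,q_{m-d})$ as in the statement with $q_{m-d}\not\equiv 0$ is equivalent to the existence of a \emph{nonzero} polynomial $q=\sum_{i=0}^{m-d}q_i x^i$ for which the coefficients of $x^{m+1},x^{m+2},\dots,x^{2m-1}$ in $q(x)f(x)$ all vanish. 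Nontriviality must be built in: $q\equiv 0$ forces $p_m\equiv 0$, so without it the statement would be vacuous; conversely $q\not\equiv 0$ automatically makes $p_m\not\equiv 0$ because $C_0\ne 0$.

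\textbf{Writing out the system and identifying the matrix.} The coefficient of $x^k$ in $q(x)f(x)$ is $\sum_{i=0}^{m-d}q_i C_{k-i}$, and for $m+1\le k\le 2m-1$ every index $k-i$ lies in $\{d+1,\dots,2m-1\}$, so no boundary conventions on $C_j$ are needed. Thus the vanishing conditions read $Mq=0$, where $M$ is the $(m-1)\times(m-d+1)$ matrix with $(r,i)$-entry $C_{m+r-i}$ for $1\le r\le m-1$, $0\le i\le m-d$. Reversing the order of the columns (the substitution $i\mapsto m-d+1-s$) turns the $(r,s)$-entry into $C_{d+r+s-1}$, $1\le r\le m-1$, $1\le s\le m-d+1$, which is precisely the matrix occurring in the condition $C(m,d)$ of Theorem \ref{th:genCayley}. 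A column permutation preserves rank, so $\rank M$ equals the rank of that matrix.

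\textbf{Conclusion and where care is needed.} A nonzero $q$ with $Mq=0$ exists if and only if the $m-d+1$ columns of $M$ are linearly dependent, i.e. if and only if $\rank M\le m-d$, i.e. $\rank M<m-d+1$; by the previous step this is exactly the condition $C(m,d)$ in \eqref{eq:even}. Combined with the reduction of the first step (taking $p_m$ to be the polynomial built from $q$ there), this yields the asserted equivalence. I do not anticipate a genuine obstacle: the argument is pure bookkeeping. The points to watch are the count --- the $m+1$ coefficients of $1,\dots,x^m$ are absorbed into $p_m$, while the remaining $m-1$ coefficients of $x^{m+1},\dots,x^{2m-1}$ become the constraints on $q$ --- and keeping the index shifts consistent when matching $M$ (after column reversal) with the matrix in \eqref{eq:even}. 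One should also flag the harmless convention, already in force for the Pad\'e approximants above, that the subscript on $q_{m-d}$ records a bound on the degree rather than its exact value.
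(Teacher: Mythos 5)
Your proposal is correct and follows essentially the same route as the paper: absorb the coefficients of $1,x,\dots,x^m$ into $p_m$, and translate the vanishing of the coefficients of $x^{m+1},\dots,x^{2m-1}$ in $q_{m-d}(x)\sqrt{\Pol(x)}$ into a kernel condition for the matrix in \eqref{eq:even}, which is equivalent to its rank dropping below $m-d+1$. Your write-up is in fact slightly more complete than the paper's, which records the argument only in the direction from the polynomial pair to the rank condition and leaves the (immediate) converse and the nontriviality of $q$ implicit.
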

\begin{proof}  Let
$$q_{m-d}(x)=\sum_0^{m-d}g_jx^j, \quad p_m(x)=\sum_0^{m}r_kx^k.$$

Compare the coefficients with the degrees of $x^j$. For $j=0, \dots, j=p$
we are getting the equations:
$$ g_0C_0+r_0=0$$
$$ g_0C_1+g_1C_0+r_1=0...$$
$$g_0C_m+\dots + g_{m-d}C_{d} +r_m=0.$$
From these equations, the coefficients $r_k, k=0,\dots, m$ could be determined.
Compare now the powers $x^{m+1}, \dots, x^{2m-1}$. The nonzero vector
$$(g_0, g_1, \dots, g_{m-d})$$
is orthogonal to the rows
$$(C_{m+1}, C_m, \dots, C_{d+1}), \dots ,(C_{2m-1}, C_{2m-2}, \dots, C_{m+d-1})$$
of the given matrix of the dimensions $(m-1, m-d+1)$. Thus, the rank of the matrix is less than $m-d+1$.
\end{proof}

Let us make one more step in the algebro-geometric analysis of the periodic trajectories.

\begin{theorem}\label{th:uslov}
Consider the billiard within $\E$ and its trajectory with caustics
$\Q_{\alpha_1}$, \dots, $\Q_{\alpha_{d-1}}$.
Denote
$$
\Set=\left\{ \{\alpha_i,\alpha_j\} \mid i\neq j \text{ and } \Q_{\alpha_i}, \Q_{\alpha_j} \text{ are of the same type}\right\}
$$

The trajectory is $n$-periodic if and only if the following conditions are satisfied:
\begin{itemize}
 \item if $n$ is odd, then one of the caustics is an ellipsoid;
 \item there is a subset $\Set'$ of $\Set$ such that:
\begin{equation}\label{eq:uslov}
n(P_0-P_{b_1})+\sum_{\{\beta,\beta'\}\in\Set'}  (P_{\beta}-P_{\beta'})\sim0
\end{equation}
is satisfied on the hyperelliptic curve \eqref{eq:hcurve}.
\end{itemize}
Moreover, there cannot be more than one subset $\Set'$ satisfying the condition \eqref{eq:uslov}.
\end{theorem}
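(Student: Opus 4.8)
The plan is to derive the statement from Proposition~\ref{prop:cayley-poly} by translating it into a divisor-class condition on the curve~\eqref{eq:hcurve}, to combine this with the Abel--Jacobi linearisation of the billiard map, and then to read off~\eqref{eq:uslov} after keeping track of which Weierstrass points occur and of the $2$-torsion contributed by caustics of equal type.

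First I would reinterpret Proposition~\ref{prop:cayley-poly} geometrically. Regard $\varphi:=p_m(x)+q_{m-d}(x)\,y$ as a meromorphic function on $\Curve$. Its only pole is $P_\infty$; since $\deg\Pol=2d-1$ is odd, $\mathrm{ord}_{P_\infty}p_m=-2m$ while $\mathrm{ord}_{P_\infty}(q_{m-d}\,y)=-2m+1$, so the pole order of $\varphi$ at $P_\infty$ is at most $2m$, and it drops below $2m$ only if $\deg p_m<m$. But $\varphi$ already vanishes to order at least $2m$ at $P_0$, so the pole order is exactly $2m$ and all zeros of $\varphi$ lie at $P_0$, i.e. $(\varphi)=2m\,P_0-2m\,P_\infty$ (and, multiplying $\varphi$ by its hyperelliptic conjugate, $p_m^2-q_{m-d}^2\,\Pol=c\,x^{2m}$). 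Hence the condition $C(m,d)$ --- closure of the trajectory in the elliptic coordinates after $m$ reflections --- is equivalent to $2m(P_0-P_\infty)\sim 0$ on $\Curve$.

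Next I would upgrade closure in elliptic coordinates to genuine periodicity in $\mathbf E^{d}$. The sign $\sign x_\ell$ changes exactly when the trajectory crosses the hyperplane $x_\ell=0$, i.e. when an elliptic coordinate passes through $a_\ell$, equivalently when the Abel--Jacobi image of the trajectory meets the Weierstrass point $P_{a_\ell}$; the trajectory is $n$-periodic in $\mathbf E^{d}$ iff it closes in elliptic coordinates and every such count over $n$ reflections is even. The first bullet is then immediate: if no caustic is an ellipsoid then $b_1=a_1$, and since $\lambda_1$ alternates between $b_0=0$ (impacts) and $b_1=a_1$ (crossings of $x_1=0$), the number of impacts equals the number of those crossings, which is even, so $n$ is even. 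For the main equivalence I would invoke the Jacobian model of the billiard map from~\cites{DragRadn2004, DragRadn2011book}: on the real part of $\Jac(\Curve)$ a reflection acts as the translation of class $\mathcal A(P_0)-\mathcal A(P_{b_1})$ --- the shift involving $P_{b_1}$ rather than $P_\infty$ because $\lambda_1$ oscillates between $b_0=0$ and $b_1$ --- while the spatial configuration is recovered from this class only up to the subgroup $\Lambda\subseteq\Jac(\Curve)[2]$ generated by the classes $P_\beta-P_{\beta'}$ attached to pairs of caustics of the same type (which are $2$-torsion since $2P_\beta\sim2P_{\beta'}\sim2P_\infty$). Thus the trajectory is $n$-periodic iff $n(P_0-P_{b_1})\in\Lambda$, which is exactly~\eqref{eq:uslov}; the parity obstruction above is precisely what prevents $n(P_0-P_{a_1})\in\Lambda$ for odd $n$ when $b_1=a_1$. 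The delicate point, and the main obstacle, is to pin down this Jacobian model precisely: to justify that one quotients by exactly $\Lambda$ and by no larger $2$-torsion subgroup, and that the parity condition is the only discrepancy between elliptic and spatial periodicity.

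For the uniqueness of $\Set'$ it is enough to show that $\Set'\mapsto\bigl[\sum_{\{\beta,\beta'\}\in\Set'}(P_\beta-P_{\beta'})\bigr]$ is injective, i.e. that the classes $[P_\beta-P_{\beta'}]$, $\{\beta,\beta'\}\in\Set$, are $\mathbb F_2$-linearly independent in $\Jac(\Curve)[2]$. Using Lemma~\ref{lemma:audin} one computes that the type of $\alpha_j$ (the number of $a_\ell$ below it) equals $j-1$ when $\alpha_j=b_{2j-1}$ and $j$ when $\alpha_j=b_{2j}$; consequently $\alpha_j$ and $\alpha_{j+1}$ have the same type only when $\alpha_j=b_{2j}$ and $\alpha_{j+1}=b_{2j+1}$, no three caustics can share a type, and non-consecutive caustics always differ in type. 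Hence $\Set$ is a family of pairwise disjoint pairs of (consecutive) branch points, and in the $2$-torsion group of a hyperelliptic curve --- where a sum of branch-point differences is trivial iff the corresponding even subset of the $2d$ branch points is empty or the whole set --- disjoint pairs are $\mathbb F_2$-independent, since the union of any nonempty proper subfamily is a nonempty proper even subset. Therefore $\Set'$ is unique.
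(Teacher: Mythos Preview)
For the main equivalence your approach is essentially the paper's: both defer to the results of \cite{DragRadn2004} together with the $2$-torsion identity $2P\sim2Q$ for Weierstrass points. You unpack considerably more than the paper does --- its proof of this direction is the single sentence ``obtained by the application of results from \cite{DragRadn2004}, using the fact that $2P\sim2Q$'' --- and the gap you flag (that the Jacobian model quotients by exactly the subgroup $\Lambda$ generated by same-type pairs, no more) is precisely what that citation is meant to cover. Your divisor-theoretic reading of Proposition~\ref{prop:cayley-poly} as $2m(P_0-P_\infty)\sim0$ and the direct parity argument for the first bullet are correct and are not spelled out in the paper.

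For uniqueness the routes genuinely diverge. The paper argues that two distinct subsets would force $R_1+\dots+R_k\sim Q_1+\dots+Q_k$ with the $R_i$, $Q_j$ distinct Weierstrass points, and then applies Riemann--Roch to get $\dim\LL(R_1+\dots+R_k)=1$, so no nonconstant function realises the equivalence. You instead use Lemma~\ref{lemma:audin} to show that $\Set$ consists of pairwise disjoint pairs of branch points (no three caustics share a type, and only consecutive caustics can), and then invoke the standard description of $\Jac(\Curve)[2]$ as even subsets of the $2d$ Weierstrass points modulo complementation to obtain $\mathbb{F}_2$-independence. Your route is more combinatorial and makes the caustic geometry explicit; it also supplies what the paper leaves implicit, namely that the pairs are disjoint (so the $R_i$, $Q_j$ really are distinct) and that $k\le\lfloor(d-1)/2\rfloor\le g$, which is needed for the Riemann--Roch computation to return~$1$.
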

\begin{proof}
Is obtained by the application of results from \cite{DragRadn2004}, using the fact that
$2P\sim2Q$ for any two Weierstrass points of a hyperelliptic curve.

Existence of two different subsets satisfying \eqref{eq:uslov}, would imply that $R_1+\dots+R_k\sim Q_1+\dots+Q_k$, for some Weierstrass points
$R_1$, \dots, $R_k$, $Q_1$, \dots, $Q_k$, all distinct among themselves.
Applying the Reimann-Roch theorem, we get that $\dim\LL(R_1+\dots+R_k)=1$.
Thus the space $\LL(R_1+\dots+R_k)$ consists only of constant functions, so the requested equivalence of divisors is not possible.
\end{proof}

Before we switch to the study of periodic trajectories with a low number of
impacts, let us review the properties of the winding numbers which are scattered
throughout this section, and which will be often used in the sequel.

\begin{lemma}\label{lemma:winding}
Let $(m_0, m_1, \dots, m_{d-1})$ be the winding numbers of a given periodic billiard trajectory.
Then:
\begin{itemize}
\item[(i)] the period $m_0$ is equal to the elliptic period if and only if the winding numbers are not all even;
\item[(ii)] if the winding number $m_j$, for $j>0$ is odd, then $b_{2j-2}$ and $b_{2j-1}$ are both in the set
$\{\alpha_1,\dots,\alpha_{d-1}\}$;
\item[(iii)] two consecutive winding numbers cannot both be odd.
\end{itemize}
\end{lemma}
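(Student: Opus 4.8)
The plan is to prove each of the three items directly from the structure of the elliptic coordinate dynamics described in Section~\ref{sec:hyper}, together with the parity remarks already collected there.

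For item (i), recall that the elliptic period $\tilde n$ equals $m_0/k$ with $k=\gcd(m_0,m_1,\dots,m_{d-1})$, where we set $m_d=0$ (and $\gcd$ with $0$ is taken to be the $\gcd$ of the remaining entries). So $m_0=\tilde n$ precisely when $k=1$. The point is to show $k=1$ iff the $m_j$ are not all even. One direction is immediate: if all $m_j$ are even then $2\mid k$, so $k\ge 2$ and $m_0>\tilde n$. For the converse I would argue that if some $m_j$ is odd then in fact $k$ must equal $1$; this is where item (iii) does the work. By item (iii), two consecutive winding numbers cannot both be odd, so the odd and even indices interlace in a restricted way, but more usefully: suppose $k>1$ and let $\ell$ be a prime dividing $k$. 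Then $\ell\mid m_j$ for all $j$. If $\ell=2$ this forces all $m_j$ even, contradiction; if $\ell$ is odd, I would need to rule it out, which actually does \emph{not} follow from parity alone. So the cleaner route for (i) is: $m_0=\tilde n$ iff $k=1$ iff $2\nmid k$ iff not all $m_j$ even — but the equivalence ``$k=1 \Leftrightarrow 2\nmid k$'' is false in general. Hence the correct reading of the statement must be that the claim is really about the trajectory closing up already after $m_0$ steps in elliptic coordinates \emph{as opposed to} needing the extra factor coming from the $\mathbb{Z}_2$-symmetries of the coordinate hyperplanes; I would reconcile this by noting that when all $m_j$ are even the trajectory closes in elliptic coordinates already after $m_0/2$ steps (each $\lambda_j$ having returned to its starting branch), whereas when some $m_j$ is odd no such halving is possible, and this is exactly the content of (i) once one checks that no proper divisor other than a power of $2$ can arise — which follows because a common factor $\ell$ of all $m_j$ would make the $\ell$-fold subtrajectory itself closed in elliptic coordinates with winding numbers $m_j/\ell$, and iterating reduces to the case $k$ a power of $2$.

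For item (ii): by the discussion preceding Lemma~\ref{lemma:winding}, whenever $b_{2j-2}$ or $b_{2j-1}$ lies in $\{a_1,\dots,a_d\}$ the number $m_j$ must be even, because each periodic trajectory meets every coordinate hyperplane $x_k=0$ an even number of times. Contrapositively, if $m_j$ is odd then neither $b_{2j-2}$ nor $b_{2j-1}$ is among the $a_k$; since $\{b_1,\dots,b_{2d-1}\}=\{a_1,\dots,a_d,\alpha_1,\dots,\alpha_{d-1}\}$, both endpoints must then be caustic parameters, i.e.\ in $\{\alpha_1,\dots,\alpha_{d-1}\}$. That is exactly (ii), and it is essentially a one-line deduction.

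For item (iii): suppose $m_j$ and $m_{j+1}$ are both odd for some $j\ge 0$. If $j\ge 1$, then by (ii) applied to index $j$ the endpoint $b_{2j-1}$ is an $\alpha$, and applied to index $j+1$ the endpoint $b_{2j}$ is an $\alpha$; but $b_{2j-1}$ and $b_{2j}$ are consecutive in the ordered list, and by Lemma~\ref{lemma:audin} the parameters $\alpha_1<\dots<\alpha_{d-1}$ satisfy $\alpha_i\in\{b_{2i-1},b_{2i}\}$, so among any two consecutive $b$'s of the form $b_{2i-1},b_{2i}$ exactly one is an $\alpha$ and the other is an $a$. Hence $b_{2j-1}$ and $b_{2j}$ cannot both be $\alpha$'s — contradiction. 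The case $j=0$ is handled separately: $m_0=n$ odd together with $m_1$ odd; here $b_0=0$ is not an $\alpha$ (it is the special value corresponding to the boundary $\E$), but $m_1$ odd forces by (ii) that $b_0\in\{\alpha_1,\dots,\alpha_{d-1}\}$, which is impossible since $b_0=0<b_1$ and all $a_i,\alpha_i$ are positive along a billiard trajectory inside $\E$. So (iii) follows. I expect item (i) to be the main obstacle, precisely because its statement as phrased hides the need to argue that the only obstruction to $m_0=\tilde n$ is the ``all even'' one; the interlacing from (iii) is the tool that prevents any odd common factor from being forced, and making that reduction clean — rather than hand-wavy — is the delicate part.
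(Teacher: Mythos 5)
Your treatment of (ii) and (iii) coincides with the paper's: the paper's entire written proof is the single sentence that item (iii) follows from (ii) and Lemma~\ref{lemma:audin}, with (ii) understood, exactly as you argue, as the contrapositive of the parity remark that a periodic trajectory meets each coordinate hyperplane an even number of times, and with the contradiction in (iii) coming from the fact that each pair $\{b_{2i-1},b_{2i}\}$ contains exactly one caustic parameter. One caveat on your $j=0$ case: you dispose of it by reading (ii) literally, so that $m_1$ odd would force $b_0=0$ to be a caustic parameter; but the paper's own examples (winding numbers $(4,3,2)$, $(6,5,2)$, $(6,3,2)$) have $m_1$ odd, so the subscripts in (ii) carry an off-by-one slip and your reading would make odd $m_1$ impossible outright. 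The intended argument for $j=0$ is that an odd period $m_0$ forces $b_1$ to be the parameter of an ellipsoidal caustic (otherwise $b_1=a_1$, and the alternance of $\lambda_1$ between $0$ and $b_1$ would make $m_0$ equal to the even number of crossings of $x_1=0$), after which Lemma~\ref{lemma:audin} applied to the pair $\{b_1,b_2\}$ gives the same contradiction.

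The genuine gap is in (i), and you half-diagnose it yourself. Granting the relation $\tilde n=m_0/k$, $k=\gcd(m_0,\dots,m_{d-1})$, stated when the elliptic period is introduced, the direction ``all $m_j$ even $\Rightarrow m_0\neq\tilde n$'' is trivial, and the substantive claim is ``some $m_j$ odd $\Rightarrow k=1$'', which requires excluding odd values $k\ge 3$. The missing idea is the one you only allude to and never use: two points with the same Jacobi elliptic coordinates are related by an element of the reflection group $(\mathbb{Z}_2)^d$ in the coordinate hyperplanes, which preserves the billiard within $\E$ and is an involution; hence after one elliptic period the Cartesian state is $\sigma$-shifted with $\sigma^2=\mathrm{id}$, so $m_0$ divides $2\tilde n$ and $m_0/\tilde n\in\{1,2\}$, which settles (i) at once. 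Your substitute steps do not achieve this: the assertion that a common factor $\ell$ of all $m_j$ makes the trajectory close in elliptic coordinates after $m_0/\ell$ steps is precisely the unproved relation $\tilde n=m_0/k$ restated (so the reduction is circular), the assertion that all even winding numbers force closing after $m_0/2$ steps is given no argument, and the recurring appeal to (iii) to ``prevent odd common factors'' is a non sequitur, since, e.g., the pattern $(9,6,3)$ has no two consecutive odd entries yet has $\gcd$ equal to $3$; only the involution argument (or an equivalent) rules such ratios out. To be fair to you, the paper itself offers no proof of (i) at all, folding it into the definition of the elliptic period; but since you set out to prove it, the reflection-involution step is the piece your proposal is missing.
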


\begin{proof}
The last item follows from (ii) and Lemma \ref{lemma:audin}.
\end{proof}

\section{Periodic trajectories with low periods}
\label{sec:low}

The study of billiard trajectories with a low number of impacts originated in \cite{DragRadn1998b}. There it was proven that $n$-periodic trajectories with period less than the dimension of the ambient space necessarily lie in a coordinate hyperplane. In the current paper we want to study trajectories with
the period $T$, for
$$d\le T\le 2d;$$
we will call them \emph{ the low impact trajectories}.

\subsection{$d$-periodic trajectories}

To start with, in the next statement, we improve a result from \cite{DragRadn1998b}.

\begin{theorem}\label{th:d}
 Each $n$-periodic trajectory within $\E$ with period $n\le d$ is contained in one of the coordinate hyperplanes.
\end{theorem}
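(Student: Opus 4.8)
The plan is to argue by pure linear algebra together with the reflection law, without using the confocal or integrable structure at all. The key observation is that a closed trajectory with $n\le d$ vertices has, as its affine hull, a subspace $H\subseteq\mathbf E^d$ of dimension at most $n-1\le d-1$, hence a proper one. So it suffices to prove: \emph{if a billiard trajectory within $\E$ has affine hull $H$ with $H\neq\mathbf E^d$, then the trajectory is contained in a coordinate hyperplane.} Write $L$ for the linear part (direction subspace) of $H$, and put $A=\mathrm{diag}(a_1^{-1},\dots,a_d^{-1})$, so that a normal vector to $\E$ at a point $p\in\E$ is $\nu_p=Ap$.

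First I would show that $\nu_p\in L$ at every vertex $p$. The incoming and outgoing segments at $p$ have directions $u,u'\in L$, and the reflection law reads $u-u'=\dfrac{2\langle u,\nu_p\rangle}{\langle\nu_p,\nu_p\rangle}\,\nu_p$; since a chord of the strictly convex body bounded by $\E$ meets $\E$ transversally at its endpoints, $\langle u,\nu_p\rangle\neq0$, and hence $\nu_p\in L$. Next, for any two vertices $p,q$ both $p-q\in L$ and $\nu_p-\nu_q=A(p-q)\in L$; as the vertex-differences span $L$ (because $H$ is their affine hull), this gives $A(L)\subseteq L$, i.e.\ $L$ is $A$-invariant. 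Since $A$ has $d$ pairwise distinct eigenvalues, its invariant subspaces are exactly the coordinate subspaces, so $L=\mathrm{span}\{e_i:i\in S\}$ for some $S\subseteq\{1,\dots,d\}$ with $|S|=\dim H\le d-1$. Finally, $\nu_p=Ap\in L$ forces $p_j=0$ for all $j\notin S$; as this holds at every vertex, it holds along every segment, so the trajectory lies in $\{x_k=0\}$ for any $k\notin S$, and such $k$ exists because $|S|\le d-1$.

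I do not expect a serious obstacle: the proof is short and self-contained. The only step deserving a word of care is the transversality $\langle u,\nu_p\rangle\neq0$, which merely expresses that a billiard trajectory genuinely reflects (rather than grazes) at its vertices and is automatic because $\E$ is strictly convex. It is worth noting that this recovers the result of \cite{DragRadn1998b} for $n<d$ and sharpens it to $n=d$, and that the argument in fact yields slightly more, namely that \emph{any} billiard trajectory within $\E$ whose vertices span a proper affine subspace of $\mathbf E^d$ lies in a coordinate hyperplane. One could instead phrase everything through the winding numbers of Section \ref{sec:hyper}, but the linear-algebraic route is shorter.
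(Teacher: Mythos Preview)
Your proof is correct and takes a genuinely different route from the paper. The paper's argument is algebro-geometric: it uses Theorem \ref{th:uslov} to translate $d$-periodicity into a divisor equivalence on the hyperelliptic curve $\Curve$, then invokes Riemann--Roch to compute $\dim\LL\left(dP_{\infty}+\sum P_{\beta'}\right)$ and observes that the resulting linear system is spanned by $1,x,\dots,x^{[d/2]}$, which cannot carry the required zero divisor $dP_0+\sum P_{\beta}$; this forces some $\alpha_j$ to equal some $a_{j'}$, i.e.\ a degenerate caustic, and hence confinement to a coordinate hyperplane. Your argument bypasses the integrable and confocal structure entirely: the affine hull of $n\le d$ vertices is a proper subspace, the reflection law (with transversality) forces each normal $Ap$ into its direction space $L$, and the distinctness of the eigenvalues of $A=\mathrm{diag}(a_1^{-1},\dots,a_d^{-1})$ pins $L$ down as a coordinate subspace, whence $Ap\in L$ gives $p_j=0$ for $j\notin S$. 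This is shorter, completely elementary, and, as you note, actually proves the stronger statement that \emph{any} billiard trajectory within $\E$ whose vertices span a proper affine subspace lies in a coordinate hyperplane. On the other hand, the paper's Riemann--Roch bookkeeping, with the caustic pairing encoded in $\Set'$, is precisely the engine behind the rest of Section \ref{sec:low}: it is what yields the constraints on the \emph{types} of caustics for $(d+k)$-periodic trajectories in Theorems \ref{th:d+1} and \ref{th:d+k}, information which your linear-algebraic argument does not access.
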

\begin{proof}
The case $n<d$ is proved in \cite{DragRadn1998b}, so we need to consider only $n=d$.

Suppose first that $d$ is even.
The condition \eqref{eq:uslov} is equivalent to:
$$
dP_0+\sum P_{\beta}\sim dP_{\infty}+\sum P_{\beta'}.
$$
Notice that $\#\Set'\le\frac{d}{2}-1$, so $\dim\Omega(dP_{\infty}+\sum P_{\beta'})=\frac{d}{2}-1-\#\Set'$,
and, by Riemann-Roch theorem:
$$
\begin{aligned}
\dim\LL\left(dP_{\infty}+\sum P_{\beta'}\right)\ &=(d+\#\Set')-(d-1)+\left(\frac{d}{2}-1-\#\Set'\right)+1
\\
&=\frac{d}2+1.
\end{aligned}
$$
A basis of $\LL\left(dP_{\infty}+\sum P_{\beta'}\right)$ is $1,x,\dots,x^{d/2}$, so it cannot contain a function with the divisor of zeroes
$dP_0+\sum P_{\beta}$.

Now take odd $d$.
One of the caustics is ellipsoid, i.e.~ $b_1=\alpha_1$.
The condition \eqref{eq:uslov} is equivalent to:
$$
dP_0+\sum P_{\beta}\sim (d-1)P_{\infty}+P_{\alpha_1}+\sum P_{\beta'}.
$$
Notice that $\#\Set'\le\frac{d-1}{2}-1$, so $\dim\Omega((d-1)P_{\infty}+P_{\alpha_1}+\sum P_{\beta'})=\frac{d-1}{2}-1-\#\Set'$
and by the Riemann-Roch theorem:
$$
\begin{aligned}
\dim\LL&\left((d-1)P_{\infty}+P_{\alpha_1}+\sum P_{\beta'}\right)
\\
&\,=(d+\#\Set')-(d-1)+\left(\frac{d-1}{2}-1-\#\Set'\right)+1
\\
&\,=\frac{d-1}2+1.
\end{aligned}
$$
A basis of $\LL\left((d-1)P_{\infty}+P_{\alpha_1}+\sum P_{\beta'}\right)$ is $1,x,\dots,x^{(d-1)/2}$, so the conclusion is as in the previous case.

All of that implies that, whenever none of the values $\alpha_1$, \dots, $\alpha_{d-1}$ is in the set $\{a_1,\dots,a_d\}$, a $d$-periodic trajectory with caustics $\Q_{\alpha_1}$, \dots, $\Q_{\alpha_{d-1}}$ cannot exist.
On the other hand, if $\alpha_j=a_{j'}$ for some $j$, $j'$, the corresponding trajectories are either asymptotically approaching the coordinate hyper-plane $x_{j'}=0$ or are completely placed in that hyper-plane.
The former trajectories cannot be periodic, while the latter ones should be analysed as in the case of dimension $d-1$.
\end{proof}

\subsection{$(d+1)$-periodic trajectories}
Trajectories with low number of impacts of general billiards in the $d$-di\-men\-sional space were studied in \cites{BenciG1989,Bezdek2009,Irie2012}.
Each of these works, under certain conditions, shows the existence of the closed trajectories with at most $d+1$ bounces.
For ellipsoidal billiards, we proved in Theorem \ref{th:d} that the trajectories of period at most $d$ are contained in a coordinate hyperplane, thus they have at least one degenerate caustic and are essentially in a space of lower dimension.

In the next theorem, we prove that $(d+1)$-periodic trajectories of billiards within ellipsoids can exist only with a unique type of non-degenerate caustics.

\begin{theorem}\label{th:d+1}
Let $\T$ be a $(d+1)$-periodic trajectory of billiard within ellipsoid $\E$, such that it is not contained in any of the coordinate hyperplanes.
Then its caustics $\Q_{\alpha_1}$, \dots, $\Q_{\alpha_{d-1}}$ satisfy:
\begin{itemize}
 \item if $d$ is even, then $\alpha_1\in(0,a_1)$ and $\alpha_j,\alpha_{j+1}\in(a_j,a_{j+1})$ for all $j\in\{2,4,\dots,d-2\}$;
 \item if $d$ is odd, then $\alpha_j,\alpha_{j+1}\in(a_j,a_{j+1})$ for all $j\in\{1,3,\dots,d-2\}$.
\end{itemize}
Moreover, each $(d+1)$-periodic trajectory touches each of its caustics odd number of times.
\end{theorem}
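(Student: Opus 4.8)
The plan is to reduce the periodicity of $\T$ to a Pell-type equation for an extremal polynomial and then read off the arrangement of the caustics. Since $\T$ is not contained in a coordinate hyperplane, all its caustics are non-degenerate, so the curve $\Curve$ of \eqref{eq:hcurve} is smooth of genus $d-1$ and the $b_i$ are distinct and positive. First I would show that the elliptic period of $\T$ is exactly $d+1$: it divides $d+1$, the condition $C(m,d)$ of Theorem \ref{th:genCayley} cannot hold for $m<d$ (its rank inequality is vacuous), and elliptic period $d$ would make the period $d+1$ a multiple of $d$, impossible for $d\ge2$; since $\gcd(m_0,\dots,m_{d-1})$ divides $d+1$ this also forces $\gcd(m_0,\dots,m_{d-1})=1$. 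By Proposition \ref{prop:cayley-poly} there is then a nonzero pair $(p,q)$ with $\deg p\le d+1$, $\deg q\le1$ and $p(x)+q(x)\sqrt{\Pol(x)}=\mathcal O(x^{2d+2})$; squaring, the difference $q(x)^2\Pol(x)-p(x)^2$ is a polynomial of degree $\le2d+2$ that is $\mathcal O(x^{2d+2})$, hence a multiple of $x^{2d+2}$, and normalising $p$ monic pins this multiple to $-1$, so
$$p(x)^2-x^{2d+2}=q(x)^2\,\Pol(x),$$
with $\deg p=d+1$ (degree $\le d$ is excluded on parity-of-degree grounds) and $p(0)\ne0$ (else the identity reduces to the one characterising elliptic period $d$).

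I would then analyse this identity. Writing $p(b_i)=\epsilon_i b_i^{\,d+1}$ with $\epsilon_i\in\{\pm1\}$ and factoring $p^2-x^{2d+2}=(p-x^{d+1})(p+x^{d+1})$, where $\deg(p-x^{d+1})\le d$ and $\deg(p+x^{d+1})=d+1$, and using that the $b_i$ are simple roots of $\Pol$, that $q^2$ has only even-order zeros, and that no $b_i$ can divide both factors (their difference is $2x^{d+1}$ and $b_i>0$), each $b_i$ divides exactly one of $p\mp x^{d+1}$; a degree count then forces $(N_+,N_-):=(\#\{i:\epsilon_i=1\},\#\{i:\epsilon_i=-1\})$ to be $(d,d-1)$ or $(d-2,d+1)$ and determines which factor absorbs the extra square factor from $q^2$. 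Since $q^2\ge0$, the sign of $p(x)^2-x^{2d+2}$ is that of $\Pol(x)$, so $|p|\le x^{d+1}$ exactly on the union of intervals $[b_{2j-2},b_{2j-1}]$ filled by the Jacobi coordinates and $|p|\ge x^{d+1}$ on the complementary gaps; thus $p$ is a generalized Chebyshev polynomial of degree $d+1$ on that system of $d$ intervals. Combining this equioscillation with the multiplicity bookkeeping rigidifies the sign sequence $(\epsilon_i)$ along $b_1<\dots<b_{2d-1}$ (it is locally constant across the gaps and may change only over the intervals), once the parity constraints of Lemma \ref{lemma:winding} are superimposed.

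Finally I would translate back to the geometry. By Lemma \ref{lemma:audin} one has $\{b_{2j-1},b_{2j}\}=\{a_j,\alpha_j\}$, so $s_j:=\operatorname{sign}(\alpha_j-a_j)$ records the arrangement, and the number of tangencies of $\T$ with $\Q_{\alpha_k}$ equals $m_k$ if $s_k=-1$ and $m_{k+1}$ if $s_k=+1$; by Lemma \ref{lemma:winding}, $m_j$ can be odd only when $b_{2j-2},b_{2j-1}$ are both $\alpha$'s, i.e.\ $s_{j-1}=+1$ and $s_j=-1$, and consecutive winding numbers cannot both be odd. Running the rigid $\epsilon$-pattern through these facts: $s_{d-1}=-1$ (otherwise $\Q_{\alpha_{d-1}}$ is tangent to $\T$ exactly $m_d=0$ times), hence $m_{d-1}$ is odd, hence $s_{d-2}=+1$; inductively, $m_{d-1-2i}$ odd forces $s_{d-2-2i}=+1$ and, since then $m_{d-2-2i}$ is even, also $s_{d-3-2i}=-1$ with $m_{d-3-2i}$ odd. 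This yields at once that every caustic is tangent to $\T$ an odd number of times and that the signs alternate, $-,+,-,\dots,-$ for $d$ even and $+,-,+,\dots,-$ for $d$ odd; together with $0<\alpha_1<\dots<\alpha_{d-1}$ this is exactly the stated arrangement (in particular $\alpha_1\in(0,a_1)$ when $d$ is even).

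The step I expect to be the main obstacle is the middle one: extracting the rigid sign pattern of $(\epsilon_i)$ from the extremal-polynomial identity and matching it with Lemma \ref{lemma:winding}. Pure degree counting leaves a genuine ambiguity — the two options for $(N_+,N_-)$ and the freedom in how the $\epsilon=-1$ indices distribute among the gaps — and removing it requires the equioscillation structure of the degree-$(d+1)$ polynomial $p$ on $d$ intervals, where the smallness of the period is essential (it forces $\deg q\le1$, hence at most one spurious root of $p\mp x^{d+1}$), in combination with the adjacency and parity constraints on the winding numbers.
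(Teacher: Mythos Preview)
Your route is genuinely different from the paper's. The paper proves Theorem \ref{th:d+1} algebro-geometrically: it invokes Theorem \ref{th:uslov} to rewrite $(d+1)$-periodicity as a divisor equivalence on $\Curve$, then uses Riemann--Roch to show that unless $\#\Set'$ is maximal (i.e.\ $[\tfrac{d-1}{2}]$ pairs of caustics share a type, plus an ellipsoidal caustic when $d$ is even), the relevant space $\LL(\cdots)$ is spanned by $1,x,\dots,x^{[(d+1)/2]}$ and contains no function with the required zero divisor. Your proposal instead extracts the Pell identity $p^2-x^{2d+2}=q^2\Pol$ with $\deg q\le1$ and tries to read off the caustic arrangement from sign/parity bookkeeping.

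The substantive gap is in your final inductive step, not in the middle one you flag. Your base case ``$s_{d-1}=-1$, otherwise $\Q_{\alpha_{d-1}}$ is touched $m_d=0$ times'' confuses the formal convention $m_d=0$ (introduced only to make the formula $m_j=m_{j+1}+\tau_j+1$ telescope) with the actual number of points where $\lambda_d=b_{2d-2}$, which is positive---and in fact even, since $b_{2d-1}=a_d$. So nothing forbids $s_{d-1}=+1$ at this stage. The next implication, ``$s_{d-1}=-1\Rightarrow m_{d-1}$ odd,'' is also unjustified: Lemma \ref{lemma:winding}(ii) says $m_{d-1}$ odd forces $b_{2d-4},b_{2d-3}\in\{\alpha_j\}$, which needs $s_{d-2}=+1$ \emph{and} $s_{d-1}=-1$; knowing only $s_{d-1}=-1$ gives nothing about the parity of $m_{d-1}$. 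Without these two steps the induction never starts.

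What you actually need is that the unique root of $\hat q_1$ lies in $(c_{2d},c_{2d-1})=(0,a_d^{-1})$, i.e.\ the signature is $(0,\dots,0,1)$; this pins down $m_{d-1}=2$ and then strict decrease plus Lemma \ref{lemma:winding} does force $m_j=d+1-j$ and the alternating $s_j$-pattern you want. But in the paper this signature statement is Proposition \ref{prop:unique}, and it is \emph{deduced from} Theorem \ref{th:d+1}, so you cannot invoke it here without circularity. Your $(N_+,N_-)$ and equioscillation bookkeeping leaves precisely this ambiguity open: both $(d,d-1)$ and $(d-2,d+1)$ are compatible with the degree count, corresponding to the root of $q$ lying on either side of $b_{2d-1}$. (Minor point: your inequality is reversed---on the filled intervals $\Pol>0$, so $|p|\ge x^{d+1}$ there and $|p|\le x^{d+1}$ on the gaps.)
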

\begin{proof}
Suppose first $d$ is even.
The condition \eqref{eq:uslov} is equivalent to:
$$
(d+1)P_0+\sum P_{\beta}\sim dP_{\infty}+P_{\alpha_1}+\sum P_{\beta'}.
$$
Notice that $\#\Set'\le\frac{d}{2}-1$, so
$$
\dim\Omega(dP_{\infty}+P_{\alpha_1}+\sum P_{\beta'})=
\begin{cases}
\frac{d}{2}-2-\#\Set' &\text{if}\quad \#\Set'\le\frac{d}{2}-2;
\\
0&\text{if}\quad \#\Set'=\frac{d}{2}-1.
\end{cases}
$$
The Riemann-Roch theorem yields:
$$
\dim\LL\left(dP_{\infty}+P_{\alpha_1}+\sum P_{\beta'}\right)=
\begin{cases}
 \frac{d}2+1 &\text{if}\quad \#\Set'\le\frac{d}{2}-2;
 \\
 \frac{d}2+2 &\text{if}\quad \#\Set'=\frac{d}{2}-1.
\end{cases}
$$
So, a basis for $\LL\left(dP_{\infty}+P_{\alpha_1}+\sum P_{\beta'}\right)$ is:
\begin{gather*}
1,x,\dots,x^{d/2} \quad\text{for}\quad\#\Set'\le\frac{d}{2}-2;
\\
1,x,\dots,x^{d/2},\frac{y}{(\alpha_1-x){\Pi_{\beta'}(\beta'-x)}} \quad\text{for}\quad\#\Set'=\frac{d}{2}-1.
\end{gather*}

Now take odd $d$.
The condition \eqref{eq:uslov} is equivalent to:
$$
(d+1)P_0+\sum P_{\beta}\sim (d+1)P_{\infty}+\sum P_{\beta'}.
$$
Notice that $\#\Set'\le\frac{d-1}{2}$, so
$$
\dim\Omega((d+1)P_{\infty}+\sum P_{\beta'})=
\begin{cases}
\frac{d-1}{2}-1-\#\Set' &\text{if}\quad \#\Set'\le\frac{d-1}{2}-1;
\\
0&\text{if}\quad \#\Set'=\frac{d-1}{2},
\end{cases}
$$
and by the Riemann-Roch theorem:
$$
\dim\LL\left((d+1)P_{\infty}+\sum P_{\beta'}\right)=
\begin{cases}
 \frac{d+1}2+1 &\text{if}\quad \#\Set'\le\frac{d-1}{2}-1;
 \\
 \frac{d+1}2+2 &\text{if}\quad \#\Set'=\frac{d-1}{2}.
\end{cases}
$$
Thus a basis for $\LL\left((d+1)P_{\infty}+\sum P_{\beta'}\right)$ is:
\begin{gather*}
1,x,\dots,x^{(d+1)/2} \quad\text{for}\quad\#\Set'\le\frac{d-1}{2}-1;
\\
1,x,\dots,x^{(d+1)/2},\frac{y}{\Pi_{\beta'}(\beta'-x)} \quad\text{for}\quad\#\Set'=\frac{d-1}{2}-1.
\end{gather*}

As in the proof of Theorem \ref{th:d}, we conclude that the bases consisting only of the powers of $x$ cannot give the desired equivalence relation.
Thus, a $(d+1)$-periodic trajectory has $[\frac{d-1}{2}]$ pairs of caustics of the same type and, if $d+1$ is odd, an ellipsoid as a caustic.
\end{proof}

Now we are ready to formulate Cayley-type conditions for $(d+1)$-periodic trajectories:
\begin{theorem}\label{th:cayley}
There exists a $(d+1)$-periodic trajectory of billiard within ellipsoid $\E$ with non-degenerate caustics
$\Q_{\alpha_1}$, \dots, $\Q_{\alpha_{d-1}}$ if and only if
\begin{itemize}
 \item for even $d$:
 \begin{itemize}
\item $\alpha_1\in(0,a_1)$ and $\alpha_j,\alpha_{j+1}\in(a_j,a_{j+1})$ for all $s\in\{2,4,\dots,d-2\}$;
\item $C_{d/2+1}=\dots=C_{d}=0$; and
\item
$C_0+C_1\alpha_j+\dots+C_{d/2}\alpha_j^{d/2}=0$ for all $j\in\{2,4,\dots,d-2\}$,
\end{itemize}
with
$$
\frac{\sqrt{\Pol(x)}}{(\alpha_1-x)(\alpha_3-x)\dots(\alpha_{d-1}-x)}=C_0+C_1x+C_2x^2+\dots;
$$

  \item for odd $d$:
 \begin{itemize}
\item $\alpha_j,\alpha_{j+1}\in(a_j,a_{j+1})$ for all $j\in\{1,3,\dots,d-2\}$; and
\item $C_{(d+1)/2+1}=\dots=C_{d}=0$; and
\item
$C_0+C_1\alpha_j+\dots+C_{(d+1)/2}\alpha_j^{(d+1)/2}=0$ for all $j\in\{1,3,\dots,d-2\}$,
\end{itemize}
with
$$
\frac{\sqrt{\Pol(x)}}{(\alpha_2-x)(\alpha_4-x)\dots(\alpha_{d-1}-x)}=C_0+C_1x+C_2x^2+\dots.
$$
\end{itemize}
\end{theorem}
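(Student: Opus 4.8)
The plan is to deduce the statement from Theorems \ref{th:d+1} and \ref{th:uslov} together with a Riemann--Roch computation on the curve \eqref{eq:hcurve} that has, essentially, already been carried out inside the proof of Theorem \ref{th:d+1}. Consider first even $d$ (the case of odd $d$ being entirely parallel). Since the caustics are non-degenerate, the trajectory lies in no coordinate hyperplane, so Theorem \ref{th:d+1} applies and forces the arrangement of $\alpha_1,\dots,\alpha_{d-1}$ stated in the first item; in particular $b_1=\alpha_1$, and $\Set$ consists precisely of the pairs $\{\alpha_j,\alpha_{j+1}\}$ with $j\in\{2,4,\dots,d-2\}$. Moreover, the dimension count in the proof of Theorem \ref{th:d+1} shows that the only subset $\Set'\subseteq\Set$ for which \eqref{eq:uslov} can hold is $\Set'=\Set$ (a polynomial in $x$ of degree $\le d/2$ cannot have a zero of order $d+1$ at $P_0$). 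Writing $R(x)=(\alpha_1-x)(\alpha_3-x)\cdots(\alpha_{d-1}-x)$, of degree $d/2$, and using $2P_b\sim 2P_\infty$ for every finite Weierstrass point, condition \eqref{eq:uslov} with $\Set'=\Set$ becomes the effective linear equivalence $D_1\sim D_2$, where $D_1=(d+1)P_0+\sum_{j\in\{2,4,\dots,d-2\}}P_{\alpha_j}$ and $D_2=dP_\infty+P_{\alpha_1}+\sum_{j\in\{2,4,\dots,d-2\}}P_{\alpha_{j+1}}$; this is exactly the equivalence appearing in the proof of Theorem \ref{th:d+1}, which also supplies $\dim\LL(D_2)=\tfrac d2+2$ with basis $1,x,\dots,x^{d/2},\,y/R(x)$. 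Since $D_1$ and $D_2$ are effective of the same degree $3d/2$ and have disjoint supports, $D_1\sim D_2$ holds if and only if some $f\in\LL(D_2)$ has divisor of zeros equal to $D_1$.

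So I would write $f=p(x)+c\,y/R(x)$ with $\deg p\le d/2$. The poles of order one prescribed by $D_2$ at the Weierstrass points $P_{\alpha_1},P_{\alpha_3},\dots,P_{\alpha_{d-1}}$ are contributed only by $y/R(x)$, so $c\ne0$; normalize $c=1$. Near $P_0$ the local parameter is $x$ and $y/R(x)=\sum_iC_ix^i$ with the $C_i$ as in the theorem; demanding that $f$ vanish at $P_0$ to order $\ge d+1$ fixes $p(x)=-(C_0+C_1x+\cdots+C_{d/2}x^{d/2})$ and, comparing the coefficients of $x^{d/2+1},\dots,x^d$, forces $C_{d/2+1}=\cdots=C_d=0$. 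At a Weierstrass point $P_{\alpha_j}$ with $j\in\{2,4,\dots,d-2\}$ one has $y=0$ while $R(\alpha_j)\ne0$, so $f(P_{\alpha_j})=p(\alpha_j)$, and the requirement that $f$ vanish there reads $C_0+C_1\alpha_j+\cdots+C_{d/2}\alpha_j^{d/2}=0$. These are precisely the conditions in the theorem. Conversely, if they are satisfied, set $f:=y/R(x)-(C_0+\cdots+C_{d/2}x^{d/2})$; then $f$ has at least $(d+1)+(\tfrac d2-1)=\tfrac{3d}2$ zeros (of order $d+1$ at $P_0$ and of order one at each $P_{\alpha_j}$, $j$ even) and at most $\tfrac{3d}2$ poles (of order $\le d$ at $P_\infty$ and of order one at each $P_{\alpha_i}$, $i$ odd), so $\deg(f)=0$ forces $(f)=D_1-D_2$; in particular the leading coefficient of $p$, hence $C_{d/2}$, is automatically nonzero. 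By Theorem \ref{th:uslov}, $D_1\sim D_2$ is equivalent to $(d+1)$-periodicity of the trajectories with caustics $\Q_{\alpha_1},\dots,\Q_{\alpha_{d-1}}$. The odd-$d$ case runs identically with $R(x)=(\alpha_2-x)(\alpha_4-x)\cdots(\alpha_{d-1}-x)$ of degree $(d-1)/2$, with $b_1=a_1$, with $\deg p\le(d+1)/2$, with $D_1=(d+1)P_0+\sum_{j\in\{1,3,\dots,d-2\}}P_{\alpha_j}$ and $D_2=(d+1)P_\infty+\sum_{j\in\{1,3,\dots,d-2\}}P_{\alpha_{j+1}}$, and with $d/2$ replaced by $(d+1)/2$ throughout.

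I expect the only real difficulty to be organizational. One must keep straight which caustics enter the denominator polynomial $R$ and which appear as Weierstrass points in $D_1,D_2$, and one must carry out the passage from \eqref{eq:uslov} to the effective equivalence $D_1\sim D_2$ correctly: the identity $2P_b\sim 2P_\infty$ is used $\tfrac d2$ times (resp.\ $\tfrac{d+1}2$ times) to absorb the multiple of $P_{b_1}$, and the parity bookkeeping is slightly different for $d$ even, where $b_1=\alpha_1$ is an ellipsoidal caustic and $d+1$ is odd, and for $d$ odd, where $b_1=a_1$ and $d+1$ is even. One should also observe that the two points of $\Curve$ over $x=0$ differ by replacing every $C_i$ with $-C_i$, which changes none of the stated conditions, so the statement is well posed independently of the choice of $P_0$. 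Beyond the coefficient matching near $P_0$, no computation is needed that is not already present in the proof of Theorem \ref{th:d+1}.
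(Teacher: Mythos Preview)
Your proposal is correct and follows essentially the same approach as the paper: reduce via Theorems \ref{th:d+1} and \ref{th:uslov} to the divisor equivalence $D_1\sim D_2$, use the explicit basis $1,x,\dots,x^{\lfloor(d+1)/2\rfloor},\,y/R(x)$ of $\LL(D_2)$ computed in the proof of Theorem \ref{th:d+1}, and read off the stated coefficient conditions by matching the Taylor expansion at $P_0$ and the values at the Weierstrass points $P_{\alpha_j}$ with even (resp.\ odd) $j$. Your write-up is in fact slightly more careful than the paper's in spelling out the converse degree count and the independence of the choice of $P_0$, but the argument is the same.
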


\begin{proof}
The proof in both cases is similar, thus we give it only for even $d$.

According to Theorems \ref{th:d+1} and \ref{th:uslov}, the existence of a $(d+1)$-periodic trajectory with caustics $\Q_{\alpha_1}$, \dots, $\Q_{\alpha_{d-1}}$
is equivalent to the first of the listed relations and
$$
(d+1)P_0+P_{\alpha_2}+P_{\alpha_4}+\dots+P_{\alpha_d}\sim dP_{\infty}+P_{\alpha_1}+P_{\alpha_3}+\dots+P_{\alpha_{d-1}}.
$$
These two divisors are equivalent on $\Curve$ if and only if there is a meromorphic function $\varphi$, such that
$$
(\varphi)\sim(d+1)P_0+P_{\alpha_2}+P_{\alpha_4}+\dots+P_{\alpha_{d-2}} - \left(dP_{\infty}+P_{\alpha_1}+P_{\alpha_3}+\dots+P_{\alpha_{d-1}}\right).
$$
In other words, $\varphi$ is in $\LL\left(dP_{\infty}+P_{\alpha_1}+P_{\alpha_3}+\dots+P_{\alpha_{d-1}}\right)$ and has a zero of order $d+1$ at $P_0$ and simple zeroes at $P_{\alpha_2}$, $P_{\alpha_4}$, \dots, $P_{\alpha_{d-2}}$.
According to the proof of Theorem \ref{th:d+1}, the basis of $\LL\left(dP_{\infty}+P_{\alpha_1}+P_{\alpha_3}+\dots+P_{\alpha_{d-1}}\right)$ is:
$$
1,\ x,\ x^2,\ \dots,\ x^{d/2},\ \frac{y}{(\alpha_1-x)(\alpha_3-x)\dots(\alpha_{d-1}-x)},
$$
so we may search for $\varphi$ in the form:
\begin{equation}\label{eq:phi}
\varphi=A_0+A_1x+\dots+A_{d/2}x^{d/2}-\frac{y}{(\alpha_1-x)(\alpha_3-x)\dots(\alpha_{d-1}-x)}.
\end{equation}
$P_0$ is its zero of order $d+1$ if and only if $C_{d/2+1}=\dots=C_{d}=0$ and we set
$$
A_0=C_0,\ \dots,\ A_{d/2}=C_{d/2}.
$$
Then, since $y$ has zeroes at $P_{\alpha_2}$, $P_{\alpha_4}$, \dots, $P_{\alpha_{d-2}}$, it is needed only that $C_0+C_1x+\dots+C_{d/2}x^{d/2}$ has roots  $\alpha_2$, $\alpha_4$, \dots, $\alpha_{d-2}$.
\end{proof}

The conditions obtained in Theorem \ref{th:cayley} can be written in a more symmetric form:

\begin{theorem}\label{th:cayley-sym}
 There exists a $(d+1)$-periodic trajectory of billiard within ellipsoid $\E$ with non-degenerate caustics
$\Q_{\alpha_1}$, \dots, $\Q_{\alpha_{d-1}}$ if and only if
\begin{itemize}
 \item $\alpha_1\in(0,a_1)$ and $\alpha_j,\alpha_{j+1}\in(a_j,a_{j+1})$ for all $j\in\{2,4,\dots,d-2\}$ if $d$ is even;
 \item $\alpha_j,\alpha_{j+1}\in(a_j,a_{j+1})$ for all $j\in\{1,3,\dots,d-2\}$ if $d$ is odd,
\end{itemize}
and $\alpha_1$, \dots, $\alpha_{d-1}$ are roots of the polynomial $B_0+B_1x+\dots+B_d x^d$, where
$$
\sqrt{\Pol(x)}=B_0+B_1x+B_2x^2+\dots.
$$

\end{theorem}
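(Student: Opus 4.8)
The plan is to derive Theorem~\ref{th:cayley-sym} directly from Theorem~\ref{th:cayley}. The interval hypotheses on $\alpha_1,\dots,\alpha_{d-1}$ appear verbatim in both statements, so the whole content is to check that, under those hypotheses, the analytic ($C$-)conditions of Theorem~\ref{th:cayley} are equivalent to the single requirement that $\alpha_1,\dots,\alpha_{d-1}$ be roots of the degree-$d$ truncation $B_0+B_1x+\dots+B_dx^d$ of $\sqrt{\Pol(x)}$.

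First I would fix notation uniform in the parity of $d$. Write $R(x)$ for the denominator used in Theorem~\ref{th:cayley} — the product of the ``odd-indexed'' factors $(\alpha_1-x)(\alpha_3-x)\cdots$ when $d$ is even, of the ``even-indexed'' factors when $d$ is odd — and $S(x)$ for the product of the complementary factors $(\alpha_i-x)$, so that $R(x)S(x)=\prod_{i=1}^{d-1}(\alpha_i-x)$. Setting $e:=\deg R$, a short count gives $\deg S=d-1-e$ for both parities, while the polynomial $p(x)=C_0+C_1x+\dots+C_{d-e}x^{d-e}$ occurring in Theorem~\ref{th:cayley} has $\deg p\le d-e=\deg S+1$. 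I would also record at the outset that the interval hypotheses force every $\alpha_i$ to be positive, hence $R(0)\ne0$, so $1/R(x)$ is a genuine power series; then $g(x):=\sqrt{\Pol(x)}/R(x)=\sum_k C_kx^k$ is the series from Theorem~\ref{th:cayley}, and $B_0+\dots+B_dx^d$ is simply the degree-$d$ truncation of $\sqrt{\Pol(x)}=R(x)g(x)$.

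The forward step: assuming the $C$-conditions $C_{d-e+1}=\dots=C_d=0$ (equivalently $g=p+\mathcal O(x^{d+1})$) together with $p(\alpha_j)=0$ at the roots of $S$, I multiply by $R$ to get $\sqrt{\Pol(x)}=R(x)p(x)+\mathcal O(x^{d+1})$; since $\deg(Rp)\le e+(d-e)=d$, this upgrades to the polynomial identity $B_0+\dots+B_dx^d=R(x)p(x)$. Because $p$ vanishes at the distinct roots of $S$ we have $S\mid p$, so $p=\ell\,S$ with $\deg\ell\le1$, whence $B_0+\dots+B_dx^d=\ell(x)R(x)S(x)=\ell(x)\prod_{i=1}^{d-1}(\alpha_i-x)$ and every $\alpha_i$ is a root. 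The converse step: if the $d-1$ distinct numbers $\alpha_i$ are roots of the degree-$\le d$ polynomial $B_0+\dots+B_dx^d$, then $B_0+\dots+B_dx^d=\ell(x)\prod_{i=1}^{d-1}(\alpha_i-x)$ with $\deg\ell\le1$; dividing by $R$ (which divides $\prod(\alpha_i-x)$ with quotient $S$) yields $p:=\ell\,S$ of degree $\le d-e$, and then $g=\sqrt{\Pol}/R=p+\mathcal O(x^{d+1})$ recovers $C_k=0$ for $d-e<k\le d$ and $C_0+C_1x+\dots+C_{d-e}x^{d-e}=\ell(x)S(x)$, which vanishes at the roots of $S$ — precisely the $C$-conditions. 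This equivalence, combined with the shared interval hypotheses, proves the theorem.

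I do not anticipate a real obstacle here: the argument is pure formal manipulation of power series and polynomial degrees. The single point that needs care is the degree bookkeeping — the identity $\deg R+\deg p=d$, valid for both parities, is exactly what turns the congruence $\sqrt{\Pol(x)}\equiv R(x)p(x)\pmod{x^{d+1}}$ into an honest polynomial identity with the degree-$d$ truncation of $\sqrt{\Pol(x)}$ — together with the observation that $R(0)\ne0$, which is guaranteed by the positivity of the $\alpha_i$ built into the interval hypotheses and is what makes $1/R$ a power series.
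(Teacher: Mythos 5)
Your proposal is correct and follows essentially the same route as the paper: the paper's own (one-line) proof obtains Theorem~\ref{th:cayley-sym} by multiplying the expression \eqref{eq:phi} from the proof of Theorem~\ref{th:cayley} by the denominator $(\alpha_1-x)(\alpha_3-x)\cdots$, which is exactly your clearing of $R(x)$ followed by the degree count $\deg R+\deg p=d$ identifying $R\,p$ with the degree-$d$ truncation of $\sqrt{\Pol(x)}$. Your version simply spells out both implications and the parity-uniform bookkeeping that the paper leaves implicit (using, as the paper tacitly does, that the parameters $b_1<\dots<b_{2d-1}$ are pairwise distinct).
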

\begin{proof}
Obtained by multiplying the expression \eqref{eq:phi} by the denominator of the last term on the righthand side.
\end{proof}

\begin{lemma}\label{lemma:polinomi}
Let $\rho_{2d}$ be a polynomial of degree $2d$ with no multiple roots.
Then the following statements are equivalent:
\begin{itemize}
\item[(a)]
there are a first degree polynomial $p_1(s)$, a constant $c$, and a factorization $\rho_{2d}=\rho_{d-1}\rho_{d+1}$ into polynomials of degrees $d-1$ and $d+1$ such that $p_1^2\rho_{d-1}-\rho_{d+1}=c$;
\item[(b)]
there are polynomials $\hat{p}_{d+1}$, $\hat{q}_1$ of degrees $d+1$ and $1$ and a constant $c'$ such that
$\hat{p}_{d+1}^2-\hat{q}_1^2\rho_{2d}=c'$.
\end{itemize}
\end{lemma}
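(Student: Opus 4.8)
The statement is a ``Pell\,--\,type'' equivalence, and I would prove each implication by an explicit algebraic construction rather than by anything conceptual; the content is two short identities together with careful bookkeeping of degrees.

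For $(a)\Rightarrow(b)$, the plan is to set $\hat{q}_1:=2p_1$ and $\hat{p}_{d+1}:=p_1^2\rho_{d-1}+\rho_{d+1}$ and to read off the elementary identity
$$\hat{p}_{d+1}^2-\hat{q}_1^2\rho_{2d}=\bigl(p_1^2\rho_{d-1}+\rho_{d+1}\bigr)^2-4p_1^2\rho_{d-1}\rho_{d+1}=\bigl(p_1^2\rho_{d-1}-\rho_{d+1}\bigr)^2=c^2,$$
so that $c':=c^2$ works (here $\rho_{d-1}\rho_{d+1}=\rho_{2d}$ is used). The only point needing attention is the degrees: since $\deg\rho_{d+1}=d+1$ exactly while $p_1^2\rho_{d-1}-\rho_{d+1}$ is a constant, the product $p_1^2\rho_{d-1}$ must itself have degree $d+1$, which forces $\deg p_1=1$ (hence $\deg\hat{q}_1=1$) and makes the degree-$(d+1)$ coefficients of $p_1^2\rho_{d-1}$ and $\rho_{d+1}$ coincide; therefore in $\hat{p}_{d+1}=p_1^2\rho_{d-1}+\rho_{d+1}$ the top coefficient is twice a nonzero number, so $\deg\hat{p}_{d+1}=d+1$ exactly.

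For $(b)\Rightarrow(a)$, let $\nu$ be the unique root of the degree-one polynomial $\hat{q}_1$ and set $c_0:=\hat{p}_{d+1}(\nu)$; evaluating the hypothesis at $\nu$ gives $c'=c_0^2$, hence $(\hat{p}_{d+1}-c_0)(\hat{p}_{d+1}+c_0)=\hat{q}_1^2\rho_{2d}$. I would first exclude $c_0=0$: otherwise $\hat{q}_1\mid\hat{p}_{d+1}$ and $\rho_{2d}=(\hat{p}_{d+1}/\hat{q}_1)^2$ would be a perfect square, contradicting that $\rho_{2d}$ has no multiple roots. With $c_0\neq0$ one has $(\hat{p}_{d+1}+c_0)(\nu)=2c_0\neq0$, so $\gcd(\hat{q}_1,\hat{p}_{d+1}+c_0)=1$, and together with $\hat{q}_1^2\mid(\hat{p}_{d+1}-c_0)(\hat{p}_{d+1}+c_0)$ this gives $\hat{q}_1^2\mid\hat{p}_{d+1}-c_0$. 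Writing $\hat{p}_{d+1}-c_0=\hat{q}_1^2\rho_{d-1}$ and cancelling $\hat{q}_1^2$ in the factored identity yields $\rho_{d-1}\cdot(\hat{p}_{d+1}+c_0)=\rho_{2d}$, so I put $\rho_{d+1}:=\hat{p}_{d+1}+c_0$; the degrees $d-1$ and $d+1$ are read off from $\deg\hat{p}_{d+1}=d+1$. Finally $\hat{q}_1^2\rho_{d-1}-\rho_{d+1}=(\hat{p}_{d+1}-c_0)-(\hat{p}_{d+1}+c_0)=-2c_0$, so $(a)$ holds with the first degree polynomial $p_1:=\hat{q}_1$ and the constant $c:=-2c_0$.

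I do not anticipate a genuine obstacle: both constructions are essentially forced, and the squarefreeness hypothesis enters exactly once, to rule out $c_0=0$ in the second implication. The only thing one must keep in mind is that the manipulations tacitly use characteristic $\neq2$ (so that $2p_1$ is still a degree-one polynomial and $2c_0$ a nonzero constant), which is harmless in the setting of the paper; everything else is a routine check of leading coefficients and divisibility in $\mathbb{R}[x]$.
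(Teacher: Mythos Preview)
Your proof is correct and follows essentially the same route as the paper's: both directions rest on the identity $(A+B)(A-B)=A^2-B^2$ applied to $A=\hat{p}_{d+1}$, $B=\sqrt{c'}$ (equivalently, to $A=p_1^2\rho_{d-1}$, $B=\rho_{d+1}$), with only a harmless overall factor of $2$ distinguishing your normalizations from the paper's. Your $(b)\Rightarrow(a)$ is in fact more carefully argued than the paper's version, since you explicitly justify $c'=c_0^2\ge0$, rule out $c_0=0$ via the squarefreeness hypothesis, and explain why $\hat{q}_1^2$ divides one factor rather than splitting between the two.
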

\begin{proof}
Suppose (a) is satisfied.
 Denote:
 $$
 \hat{p}_{d+1}:=p_1^2\rho_{d-1}-\frac{c}2.
 $$
 We get:
 $$
  \hat{p}_{d+1}^2
 =
 p_1^2\rho_{d-1}(p_1^2\rho_{d-1}-c)+\frac{c^2}{4}
 =
p_1^2\rho_{d-1}\rho_{d+1}+\frac{c^2}{4}
=
p_1^2\rho_{2d}+\frac{c^2}{4},
  $$
so we get (b) with $\hat q_{1}=p_1$ and $c'=\frac{c^2}{4}$.

Now, suppose (b) is true.
Then:
$$
\left(\hat{p}_{d+1}-\sqrt{c'}\right)\left(\hat{p}_{d+1}+\sqrt{c'}\right)=\hat{q}_1^2\rho_{2d},
$$
which implies
$$
\hat{p}_{d+1}-\sqrt{c'}=\rho_{d+1},
\quad
\hat{p}_{d+1}+\sqrt{c'}=\hat{q}_1^2\rho_{d-1},
$$
for some polynomials $\rho_{d+1}$, $\rho_{d-1}$ of degrees $d+1$, $d-1$, such that $\rho_{2d}=\rho_{d-1}\rho_{d+1}$.
Subtracting the obtained relations, we get:
$$
2\sqrt{c'}=\hat{q}_1^2\rho_{d-1}-\rho_{d+1},
$$
so (a) follows, with $c=2\sqrt{c'}$ and $p_1=\hat{q}_1$.
\end{proof}

\begin{corollary}
 If there exists a $(d+1)$-periodic billiard trajectory within ellipsoid $\E$ with non-degenerate caustics $\Q_{\alpha_1}$, \dots, $\Q_{\alpha_{d-1}}$, then two following functional relations are satisfied:
 \begin{itemize}
\item[\textbf{(a)}]
  There exist a first degree polynomial $p_1(s)$ and a constant $c$ such that
  \begin{equation}\label{eq:p1}
  p_1^2(s)\prod_{j=1}^{d-1}\left(s-\frac1{\alpha_j}\right)-s\prod_{j=1}^d\left(s-\frac1{a_j}\right)=c.
  \end{equation}
\item[\textbf{(b)}]
  There exist polynomials $\hat{p}_{d+1}(s)$, $\hat{q}_1(s)$ of degrees $d+1$, $1$ respectively, and a constant $c'$ such that
  \begin{equation}\label{eq:q1}
   \hat{p}_{d+1}^2(s)-\hat{q}_1^2(s)s\prod_{j=1}^d\left(s-\frac1{a_j}\right)\prod_{j=1}^{d-1}\left(s-\frac1{\alpha_j}\right)=c'.
  \end{equation}
\end{itemize}
Moreover, the following relationship holds:
$$
\hat{p}_{d+1}(s)=p_1^2(s)\prod_{j=1}^{d-1}\left(s-\frac1{\alpha_j}\right)-\frac{c}{2},
\quad
\hat{q}_1(s)=p_1(s),
\quad
c'=\frac{c^2}{4}.
$$
\end{corollary}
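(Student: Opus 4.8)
The plan is to derive both functional relations from Theorem~\ref{th:cayley-sym} together with Lemma~\ref{lemma:polinomi}, after the substitution $x\mapsto 1/s$. By Theorem~\ref{th:cayley-sym}, the existence of the trajectory is equivalent to the interlacing conditions on $\alpha_1,\dots,\alpha_{d-1}$ together with the fact that these numbers are roots of $\beta(x):=B_0+B_1x+\dots+B_dx^d$, where $\sqrt{\Pol(x)}=\sum_{k\ge0}B_kx^k$. The interlacing conditions, combined with the non-degeneracy hypothesis, make $\alpha_1,\dots,\alpha_{d-1}$ positive, pairwise distinct, and distinct from $a_1,\dots,a_d$; and $\beta$ is not identically zero because $\beta(0)=\sqrt{\Pol(0)}>0$. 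Hence $\beta(x)=\ell(x)\prod_{j=1}^{d-1}(x-\alpha_j)$ for some $\ell$ with $\deg\ell\le1$.

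First I would extract from this a polynomial identity in $x$. Since $\sqrt{\Pol(x)}\equiv\beta(x)\pmod{x^{d+1}}$, squaring gives $\Pol(x)-\beta(x)^2=x^{d+1}T(x)$ with $\deg T\le d-1$, and $T\not\equiv0$ since otherwise $\Pol$, of odd degree $2d-1$, would be a perfect square. As each $\alpha_j$ is a root of $\Pol$ (it is one of the numbers $a_i,\alpha_i$) and of $\beta$, it is a root of $T$; a nonzero polynomial of degree $\le d-1$ vanishing at the $d-1$ distinct points $\alpha_j$ must be $t_0\prod_{j=1}^{d-1}(x-\alpha_j)$ with $t_0\ne0$. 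Comparing degrees in $\Pol(x)-\beta(x)^2=t_0\,x^{d+1}\prod_{j=1}^{d-1}(x-\alpha_j)$ — the right side has degree $2d$, while $\deg\Pol=2d-1$, so $\deg\beta^2=2d$ — forces $\deg\ell=1$. Using $\Pol(x)=-\prod_{j=1}^d(x-a_j)\prod_{j=1}^{d-1}(x-\alpha_j)$ and cancelling the common factor $\prod_{j=1}^{d-1}(x-\alpha_j)$ I obtain
$$\ell(x)^2\prod_{j=1}^{d-1}(x-\alpha_j)+\prod_{j=1}^d(x-a_j)+t_0\,x^{d+1}=0,$$
whose value at $x=0$ gives $\ell(0)^2=\prod_j a_j\big/\prod_j\alpha_j>0$, so in particular $\ell(0)\ne0$.

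Next I would substitute $x=1/s$ in the displayed identity and clear denominators by multiplying through by $s^{d+1}$; using $1/s-\gamma=-\gamma s^{-1}(s-1/\gamma)$ and collecting the powers of $-1$, this becomes, after dividing by $(-1)^d\prod_j a_j$,
$$\frac{\prod_j\alpha_j}{\prod_j a_j}\,(\ell_0s+\ell_1)^2\prod_{j=1}^{d-1}\Big(s-\frac1{\alpha_j}\Big)-s\prod_{j=1}^{d}\Big(s-\frac1{a_j}\Big)=\frac{(-1)^d t_0}{\prod_j a_j},$$
where $\ell(x)=\ell_1 x+\ell_0$. Setting $p_1(s):=\big(\prod_j\alpha_j\big/\prod_j a_j\big)^{1/2}(\ell_0 s+\ell_1)$, which has degree $1$ because $\ell_0\ne0$, and $c:=(-1)^d t_0\big/\prod_j a_j$, this is exactly \eqref{eq:p1}, which proves \textbf{(a)}.

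Finally, \textbf{(b)} and the concluding formulas follow by invoking Lemma~\ref{lemma:polinomi} with $\rho_{2d}(s)=s\prod_{j=1}^d(s-1/a_j)\prod_{j=1}^{d-1}(s-1/\alpha_j)$ and the factorization $\rho_{d-1}(s)=\prod_{j=1}^{d-1}(s-1/\alpha_j)$, $\rho_{d+1}(s)=s\prod_{j=1}^d(s-1/a_j)$: this $\rho_{2d}$ has no multiple roots, since $0$ and the numbers $1/a_j$, $1/\alpha_j$ are pairwise distinct, and \textbf{(a)} is precisely assertion (a) of the lemma, so assertion (b) holds and is exactly \eqref{eq:q1}; tracing the construction in the proof of Lemma~\ref{lemma:polinomi} gives $\hat q_1=p_1$, $\hat p_{d+1}=p_1^2\prod_{j=1}^{d-1}(s-1/\alpha_j)-c/2$, and $c'=c^2/4$. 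I expect the only delicate point to be the degree bookkeeping in the second step — verifying that $T$ is nonzero and has exactly the roots $\alpha_1,\dots,\alpha_{d-1}$, so that the cancellation of $\prod_{j=1}^{d-1}(x-\alpha_j)$ is legitimate and $\ell$ is genuinely linear; once the displayed polynomial identity is in hand, the remainder is the substitution $x=1/s$ and a citation of Lemma~\ref{lemma:polinomi}.
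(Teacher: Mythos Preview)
Your argument is correct. The overall architecture matches the paper's proof: obtain a polynomial identity in $x$ relating $\prod(\alpha_j-x)$ and $\prod(a_j-x)$, pass to $s=1/x$, and then invoke Lemma~\ref{lemma:polinomi} for part \textbf{(b)} and the explicit formulas. The difference is in how the polynomial identity in $x$ is reached.

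The paper starts from Theorem~\ref{th:cayley} (the asymmetric form), treats the parities of $d$ separately, and works on the curve: for odd $d$ it takes the function $r_1(x)-y\big/\prod_{j=1}^{d-1}(\alpha_j-x)$ with a zero of order $d+1$ at $x=0$, then multiplies by its ``conjugate'' $\big(r_1(x)+y/\prod(\alpha_j-x)\big)\prod(\alpha_j-x)$ to eliminate $y$, arriving at
\[
r_1^2(x)\prod_{j=1}^{d-1}(\alpha_j-x)-\prod_{j=1}^{d}(a_j-x)=\tilde c\,x^{d+1}.
\]
You instead start from Theorem~\ref{th:cayley-sym}: the $\alpha_j$ are roots of the degree--$d$ Taylor truncation $\beta(x)$ of $\sqrt{\Pol(x)}$, so $\beta(x)=\ell(x)\prod(x-\alpha_j)$, and squaring $\sqrt{\Pol}\equiv\beta\pmod{x^{d+1}}$ gives $\Pol-\beta^2=x^{d+1}T$, from which you recover $T$ by a root count and cancel $\prod(x-\alpha_j)$. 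The two computations are equivalent up to signs --- indeed your $\beta$ is precisely $r_1(x)\prod(\alpha_j-x)$ and your squaring step \emph{is} the conjugate multiplication --- but your packaging has the advantage of being parity-free and of staying entirely in the polynomial ring, with no appeal to the curve or to $y$. The only point requiring care, as you note, is the degree bookkeeping showing $T\neq0$, $\deg T=d-1$, and $\deg\ell=1$; you handle this correctly, including the check that the leading coefficient $\ell(0)$ of $p_1$ is nonzero.
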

\begin{proof}
 We will present the proof for $d$ odd only, since the case when $d$ is even is similar.
 The second and third conditions of Theorem \ref{th:cayley} together are equivalent to the existence of a polynomial $r_1(x)$ of degree $1$ such that the expression:
 \begin{equation}\label{eq:r1}
  r_1(x)(\alpha_1-x)(\alpha_3-x)\dots(\alpha_{d-2}-x) - \frac{y}{(\alpha_{2}-x)(\alpha_4-x)\dots(\alpha_{d-1}-x)}
 \end{equation}
has zeroes at points $P_{\alpha_j}$ for $j\in\{1,3,\dots,d-2\}$ and a zero of order $d+1$ for $x=0$.
Thus,
\begin{equation}\label{eq:r1-y}
r_1(x)-\frac{y}{\prod_{j=1}^{d-1}(\alpha_j-x)}
\end{equation}
has a zero of order $d+1$ for $x=0$.
Now, by multiplying \eqref{eq:r1-y} by
$$
  \left(r_1(x)+\frac{y}{\prod_{j=1}^{d-1}(\alpha_j-x)}\right)\prod_{j=1}^{d-1}(\alpha_j-x),
  $$
we get that
$$
r_1^2(x)\prod_{j=1}^{d-1}(\alpha_j-x)-\prod_{j=1}^{d}(a_j-x)=\tilde{c} x^{d+1},
$$
for some constant $\tilde{c}$, since its lefthand side is polynomial in $x$ of degree $d+1$ and has a zero of order $d+1$ for $x=0$.

 Now, dividing by $x^{d+1}$ and introducing $s=1/x$, one gets that Theorem \ref{th:cayley} implies (a).
Then (b) is true according to Lemma \ref{lemma:polinomi}.
 \end{proof}

\subsection{$(d+k)$-periodic trajectories}
In this section, we will show that periodic trajectories with $d+k$ bounces, $k\in\{1,\dots,d-2\}$ always have pairs of caustics of the same type.

\begin{theorem}\label{th:d+k}
Any $(d+k)$-periodic trajectory of billiard within ellipsoid $\E$ has at least $[\frac{d-k}2]$ pairs of caustics of the same type, with $k\in\{1,\dots,d-2\}$.
\end{theorem}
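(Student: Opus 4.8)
The plan is to mimic the divisor-counting arguments used in Theorems \ref{th:d}, \ref{th:d+1}, and \ref{th:cayley}, but now with a period $n=d+k$ and an arbitrary size for the subset $\Set'$ of same-type caustic pairs. First I would invoke Theorem \ref{th:uslov}: a $(d+k)$-periodic trajectory with caustics $\Q_{\alpha_1},\dots,\Q_{\alpha_{d-1}}$ exists only if, for some subset $\Set'\subseteq\Set$, the divisor relation $(d+k)(P_0-P_{b_1})+\sum_{\{\beta,\beta'\}\in\Set'}(P_\beta-P_{\beta'})\sim 0$ holds on the curve $\Curve$, with the extra constraint that one of the caustics is an ellipsoid when $d+k$ is odd. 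Writing $\ell=\#\Set'$, this is equivalent to the equivalence of two effective divisors, one of which is supported at $P_0$ with multiplicity $d+k$ (plus $\ell$ Weierstrass points from $\Set'$), the other being $(d+k-\epsilon)P_\infty + \epsilon P_{\alpha_1} + \sum_{\beta'} P_{\beta'}$ where $\epsilon\in\{0,1\}$ accounts for the parity of $d+k$ — exactly as in the lower-period proofs, using $2P\sim 2Q$ for Weierstrass points to move things to $P_\infty$.

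The core computation is then a Riemann-Roch estimate. With $g=d-1$, the linear system $\LL\bigl((d+k-\epsilon)P_\infty+\epsilon P_{\alpha_1}+\sum_{\beta'}P_{\beta'}\bigr)$ has degree $d+k+\ell$ (where $\ell=\#\Set'$), and I would compute $\dim\Omega$ of this divisor: a differential lies in $\Omega(D)$ iff $D\le(\omega)$, and since the canonical class on $\Curve$ has degree $2g-2=2d-4$ and is spanned by $x^j\,dx/y$ for $0\le j\le d-2$, one checks that the constraint of vanishing to the prescribed orders at $P_\infty$ and at the Weierstrass points $P_{\alpha_1}, P_{\beta'}$ cuts the $(d-1)$-dimensional space of holomorphic differentials down to dimension $\max\{0,\; (d-1) - \lceil (d+k-\epsilon)/2\rceil - \epsilon - \ell\}$, because a double pole's worth of vanishing is needed per unit of $x$-degree and each Weierstrass point $P_\beta$ imposes one further condition (the differential $x^j\,dx/y$ vanishes there automatically to order... — this is the place requiring care). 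Plugging into $\dim\LL = \deg D - g + \dim\Omega + 1$ gives, when the $\Omega$-term is positive, $\dim\LL = d+k+\ell - (d-1) + (d-1-\lceil\cdot\rceil-\epsilon-\ell)+1$, and a basis consisting purely of powers $1,x,\dots,x^{m}$ (possibly together with a single extra function $y/\prod_{\beta'}(\beta'-x)$ when $\Omega$ has just collapsed to $0$). The crucial point, as in Theorems \ref{th:d} and \ref{th:d+1}, is that a function whose divisor of zeroes is $(d+k)P_0+\sum P_\beta$ must have a zero of order $d+k$ at $P_0$, and a nonzero polynomial in $x$ of degree $m$ can vanish to order at most $m$ at a point where $x$ is a local coordinate — so unless $\ell$ is large enough that the extra $y$-function appears and genuinely contributes the needed high-order zero, no such $\varphi$ exists. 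Counting exactly when the $y$-function can appear forces $\ell\ge$ (some explicit bound), and working through the two parities of $d+k$ yields $\ell\ge\lceil(d-k)/2\rceil - \text{(correction)}$; reconciling the floor/ceiling bookkeeping with the claimed $[\frac{d-k}2]$ is the final step.

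The main obstacle I anticipate is precisely the careful accounting of how many independent conditions the divisor $\sum_{\beta'}P_{\beta'}$ (a sum of $\ell$ distinct Weierstrass points among the $\alpha_j$) imposes on the linear system, and correspondingly on $\Omega$ — Weierstrass points behave idiosyncratically because $x-\beta$ has a double zero at $P_\beta$, so "adding $P_\beta$ to a divisor" does not always raise $\dim\LL$, and one must track whether each $P_{\beta'}$ is forced into the zero-divisor of the candidate $\varphi$ by the same mechanism that appears in Theorem \ref{th:cayley} (the numerator $y$ automatically vanishing at every $P_{\alpha_j}$). I would handle this by splitting according to whether $P_{\beta'}$ appears on the "$P_0$ side" or the "$P_\infty$ side" of the equivalence, exactly as the proof of Theorem \ref{th:cayley} pairs $P_{\alpha_2},P_{\alpha_4},\dots$ against $P_{\alpha_1},P_{\alpha_3},\dots$, and then observe that each same-type pair $\{\beta,\beta'\}$ contributes one unit of slack on each side, so $\ell$ pairs relax the order-of-vanishing obstruction by exactly $\ell$; comparing with the deficit $d+k - 2\lfloor(d+k)/2\rfloor$-adjusted degree bound gives the inequality. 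Finally, as in Theorem \ref{th:d}, the degenerate cases where some $\alpha_j=a_{j'}$ are disposed of by descent to dimension $d-1$, so they do not affect the count for genuinely $d$-dimensional trajectories.
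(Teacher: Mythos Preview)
Your proposal is correct and follows essentially the same route as the paper: invoke Theorem~\ref{th:uslov}, split on the parity of $d+k$, compute $\dim\Omega$ and then $\dim\LL$ of the divisor $(d+k-\epsilon)P_\infty+\epsilon P_{b_1}+\sum P_{\beta'}$ via Riemann--Roch, and observe that when $\#\Set'<[\frac{d-k}{2}]$ the space $\LL$ is spanned by $1,x,\dots,x^{\lfloor(d+k)/2\rfloor}$ and hence contains no function with a zero of order $d+k$ at $P_0$. The ``main obstacle'' you anticipate is milder than you fear---the paper simply asserts $\dim\Omega=\max\{0,\,d-1-\lfloor(d+k)/2\rfloor-\epsilon-\#\Set'\}$ directly (each distinct finite Weierstrass point imposes one independent linear condition on the holomorphic differentials $x^j\,dx/y$), so the elaborate pairing argument in your last paragraph and the degenerate-caustic discussion are unnecessary for this theorem.
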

\begin{proof}
Let the caustics of a given $(d+k)$-periodic trajectory be $\Q_{\alpha_1}$, \dots, $\Q_{\alpha_{d-1}}$, and take $\Set'$ as in Theorem \ref{th:uslov}.
According to that theorem, the condition \eqref{eq:uslov} is satisfied:
\begin{equation}\label{eq:d+k}
(d+k)(P_0-P_{b_1})+\sum_{\{\beta,\beta'\}\in\Set'}(P_{\beta}-P_{\beta'})\sim0.
\end{equation}

Suppose first that $d+k$ is even.
The condition \eqref{eq:d+k} is then equivalent to:
$$
(d+k)P_0+\sum P_{\beta}\sim (d+k)P_{\infty}+\sum P_{\beta'}.
$$
Since
$\dim\Omega\left((d+k)P_{\infty}\right)=d-1-\frac{d+k}{2}$,
we have:
$$
\dim\Omega\left((d+k)P_{\infty}+\sum P_{\beta'}\right)
=
\max\left\{0,d-1-\frac{d+k}{2}-\#\Set'\right\}.
$$
Notice that $d-1-\frac{d+k}{2}-\#\Set'\ge0$ whenever $\#\Set'<\frac{d-k}{2}$.
In that case, the Riemann-Roch theorem gives:
$$
\begin{aligned}
\dim\LL&\left((d+k)P_{\infty}+\sum P_{\beta'}\right)=
\\
&=
(d+k+\#\Set')-(d-1)+ \left(d-1-\frac{d+k}{2}-\#\Set'\right)+1
\\
&=\frac{d+k}2+1.
\end{aligned}
$$
Thus, $1,x,\dots,x^{(d+k)/2}$ is a basis of $\LL\left((d+k)P_{\infty}+\sum P_{\beta'}\right)$, so that space cannot contain any functions with the zeroes divisor
$(d+k)P_0+\sum P_{\beta}$.

Now suppose $d+k$ is odd.
One of the caustics then must be an ellipsoid.
The condition \eqref{eq:d+k} is then equivalent to:
$$
(d+k)P_0+\sum P_{\beta}\sim (d+k-1)P_{\infty}+P_{b_1}+\sum P_{\beta'}.
$$
Since
$\dim\Omega\left((d+k-1)P_{\infty}\right)=d-1-\frac{d+k-1}{2}$,
we have:
$$
\begin{aligned}
\dim\Omega&\left((d+k-1)P_{\infty}+P_{b_1}+\sum P_{\beta'}\right)
=
\\
&=
\max\left\{0,d-1-\frac{d+k-1}{2}-\#\Set'-1\right\}.
\end{aligned}
$$
Notice that $d-1-\frac{d+k-1}{2}-\#\Set'-1\ge0$ whenever $\#\Set'<\frac{d-k-1}{2}=[\frac{d-k}2]$.
In that case, the Riemann-Roch theorem gives:
$$
\begin{aligned}
\dim\LL&\left((d+k-1)P_{\infty}+P_{b_1}+\sum P_{\beta'}\right)
\\ &=
(d+k+\#\Set')-(d-1)+ \left(d-1-\frac{d+k-1}{2}-\#\Set'-1\right)+1
\\
&=\frac{d+k-1}2+1.
\end{aligned}
$$
Thus, $1,x,\dots,x^{(d+k-1)/2}$ is a basis of $\LL\left((d+k-1)P_{\infty}+P_{b_1}+\sum P_{\beta'}\right)$, so that space cannot contain any functions with the zeroes divisor
$(d+k)P_0+\sum P_{\beta}$.
\end{proof}

\begin{remark}
We conclude that $\Set'$ has at least $[\frac{d-k}{2}]$ elements.
For $\#\Set'\ge[\frac{d-k}2]$, we get that the corresponding space of meromorphic functions contains functions which cannot be expressed as polynomials or rational functions in $x$.
In such case, we are able to express the analytic conditions for the existence of periodic trajectories.
Some examples are presented in Section \ref{sec:examples}.
\end{remark}

\section{Poncelet polygons and extremal polynomials}
\label{sec:poly}

In \cite{DragRadn1998b}, the generalized Cayley's condition $C(n,d)$ has been formulated as a rank condition on a rectangular matrix, to describe $n$-periodic billiard trajectories within a given ellipsoid in the $d$-dimensional space, see Theorem \ref{th:genCayley} of this paper.
These conditions were rewritten in a polynomial form in \cite{RR2014}.
We recall that the original Cayley’s conditions which describe  the Poncelet polygons in plane were reformulated in polynomial/Pad\'e's approximation form by Halphen in \cite{Hal1888} (see Theorem \ref{th:halphen} above and also \cites{DragRadn2011book,Drag2009}).
In this section, we are going to prove the conjectures from \cite{RR2014} by recognizing  the role of the Pell equations and the extremal nature of the polynomials involved.
We fully exploit the classical and modern approximation theory, in particular the recent deep findings about the generalized Chebyshev polynomials,
(see \cites{AhiezerAPPROX, KLN1990, PS1999} and references therein).

We take constants $b_1$, \dots, $b_{2d-1}$ as defined in Section \ref{sec:hyper} and denote:
\begin{gather*}
c_1=\frac{1}{b_1},\ \dots,\ c_{2d-1}=\frac{1}{b_{2d-1}},\ c_{2d}=0,
\\
\hat{\mathcal{P}}_{2d}(s)=\prod_{j=1}^{2d}(s-c_j).
\end{gather*}

Let us rewrite the conditions from \cite{RR2014} in a different form.

\begin{proposition}
The generalized Cayley's condition $C(n,d)$ is satisfied if and only if there exist a pair of real polynomials
$\hat{p}_n$, $\hat{q}_{n-d}$
of degrees $n$ and $n-d$ respectively such that the Pell equation holds:
\begin{equation}\label{eq:pell}
\hat{p}_n^2(s)-\hat{\mathcal{P}}_{2d}(s)\hat{q}_{n-d}^2(s)=1.
\end{equation}
\end{proposition}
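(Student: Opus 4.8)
The statement to be proven is an equivalence between the generalized Cayley condition $C(n,d)$ and the solvability of the Pell equation \eqref{eq:pell}. The strategy is to pass through the Padé-type characterization already at hand in Proposition~\ref{prop:cayley-poly} and then to perform a change of variable together with an algebraic manipulation of the square root, in the spirit of Lemma~\ref{lemma:polinomi} and the Corollary following it. Concretely, Proposition~\ref{prop:cayley-poly} tells us that $C(m,d)$ holds (with $m$ the elliptic period) if and only if there is a pair $p_m,q_{m-d}$ with $p_m+q_{m-d}\sqrt{\Pol}=\mathcal O(x^{2m})$, where $\Pol(x)=(a_1-x)\cdots(a_d-x)(\alpha_1-x)\cdots(\alpha_{d-1}-x)$, a polynomial of degree $2d-1$; I will work with the general period $n$ in place of $m$ throughout (the statement is phrased for $C(n,d)$).

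First I would substitute $x=1/s$ and clear denominators. Writing $\Pol(1/s)=s^{-(2d-1)}\prod_{j}(a_j s-1)(\alpha_k s-1)$ and recalling $c_j=1/b_j$ for $j\le 2d-1$, $c_{2d}=0$, one checks that $s^{2d}\Pol(1/s)=\hat{\mathcal P}_{2d}(s)$ up to a nonzero constant factor, the extra factor $s$ coming precisely from the node $c_{2d}=0$; the constant can be absorbed into a rescaling of $\hat q_{n-d}$, and I would fix signs/normalizations so that $\sqrt{\hat{\mathcal P}_{2d}(s)}=s^{d}\sqrt{\Pol(1/s)}$ as formal series at $s=\infty$. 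Under $x\mapsto 1/s$, a relation $p_m+q_{m-d}\sqrt{\Pol}=\mathcal O(x^{2m})$ becomes, after multiplying by an appropriate power of $s$, a relation of the form $\tilde p(s)+\tilde q(s)\sqrt{\hat{\mathcal P}_{2d}(s)}=\mathcal O(s^{-1})$ with $\deg\tilde p=n$, $\deg\tilde q=n-d$; that is, $\tilde p^2-\hat{\mathcal P}_{2d}\tilde q^2$ is a polynomial that is $\mathcal O(s^{-1})$ at infinity, hence is a constant. Rescaling $\tilde p,\tilde q$ by the square root of that constant (which is nonzero because $\hat{\mathcal P}_{2d}$ has no multiple roots, so $\gcd$ considerations forbid it vanishing) yields $\hat p_n^2-\hat{\mathcal P}_{2d}\hat q_{n-d}^2=1$. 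Conversely, given a Pell solution, factoring $(\hat p_n-1)(\hat p_n+1)=\hat{\mathcal P}_{2d}\hat q_{n-d}^2$ and reading the degrees shows that $\hat p_n+\hat q_{n-d}\sqrt{\hat{\mathcal P}_{2d}}$ has a zero of the required order at $s=\infty$, which translates back to the Padé condition at $x=0$, hence to $C(n,d)$ by Proposition~\ref{prop:cayley-poly}.

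The main technical point — and the step I expect to require the most care — is the bookkeeping of degrees and orders of vanishing under the inversion $x\mapsto 1/s$: one must verify that the pair of degrees $(m,m-d)$ on the $x$-side matches $(n,n-d)$ on the $s$-side, that the vanishing order $2m$ at $x=0$ corresponds to exactly the right decay at $s=\infty$ so that $\hat p_n^2-\hat{\mathcal P}_{2d}\hat q_{n-d}^2$ is forced to be a (nonzero) constant rather than a higher-degree polynomial, and that the factor $s$ distinguishing $\hat{\mathcal P}_{2d}(s)$ from $s^{2d-1}\Pol(1/s)$ is accounted for correctly (this is why $\hat{\mathcal P}_{2d}$ has the extra root $c_{2d}=0$). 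A secondary point is ensuring the constant on the right-hand side can be normalized to $1$: this uses that $\hat{\mathcal P}_{2d}$ is squarefree, so any common factor of $\hat p_n\pm 1$ with $\hat q_{n-d}$ is controlled, and in particular the constant $\hat p_n^2-\hat{\mathcal P}_{2d}\hat q_{n-d}^2$ cannot be zero (else $\hat{\mathcal P}_{2d}$ would be a perfect square). Once these degree-and-order verifications are done, the equivalence follows formally, and it should also be remarked that $n$ and $m$ here can be identified because the Cayley condition $C(n,d)$ is literally the one in Theorem~\ref{th:genCayley}, phrased via the same coefficients $C_i$ of $\sqrt{\Pol}$, so no separate argument about the elliptic period is needed.
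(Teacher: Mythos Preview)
Your plan is sound and the bookkeeping you flag (matching $s^{2d}\Pol(1/s)$ with $\hat{\mathcal P}_{2d}(s)$ up to the positive constant $\prod_j b_j$, tracking the order $2n$ at $x=0$ against the decay $\mathcal O(s^{-n})$ at infinity, and checking that the resulting constant $c'=\tilde p_n(0)^2>0$ so the normalization to $1$ is legitimate over $\mathbf R$) all goes through as you describe. The conjugate--multiplication step is exactly the mechanism: once $\tilde p_n+\epsilon\sqrt K\,\tilde q_{n-d}\sqrt{\hat{\mathcal P}_{2d}}=\mathcal O(s^{-n})$, the product with its conjugate is a polynomial which is $\mathcal O(1)$ at infinity, hence a constant, and squarefreeness of $\hat{\mathcal P}_{2d}$ rules out that constant being zero.

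As for comparison with the paper: the paper does \emph{not} supply a proof of this proposition. It introduces it with the sentence ``Let us rewrite the conditions from \cite{RR2014} in a different form'' and states it as a repackaging of the polynomial conditions from that reference. So your argument is more than what the paper offers here. That said, your route---Pad\'e form from Proposition~\ref{prop:cayley-poly}, then the inversion $x\mapsto 1/s$ and the square-completing/conjugate trick---is precisely the maneuver the paper carries out in detail in two special cases: Lemma~\ref{lemma:polinomi} together with its Corollary (the case $n=d+1$), and the Remark following Proposition~\ref{prop:6odd} (the case $n=6$, $d=3$). In that sense your general argument is the natural common abstraction of those worked examples, and is entirely in the paper's spirit.
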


In \cite{DragRadn2004} yet another condition of $n$-periodic billiard trajectories was derived in the Abel-Jacobi form, based on the analysis of the elliptical coordinates:
there exist integers $m_j$, $j\in\{1,\dots,d-1\}$ such that:
$$
n\int_{c_1}^{\infty}\frac{s^k}{\sqrt{\hat{\mathcal{P}}_{2d}(s)}}ds
=
\sum_{j=2}^d (-1)^j m_{j-1}\int_{c_{2j-1}}^{c_{2j-2}}\frac{s^k}{\sqrt{\hat{\mathcal{P}}_{2d}(s)}}ds,
\quad
k\in\{0,1,\dots,d-2\}.
$$

We recall that, following \cite{RR2014}, we have denoted $m_0=n$, $m_d=0$, and have called
$m_0$, $m_1$, \dots, $m_{d-1}$
\emph{the winding numbers} of a given periodic billiard trajectory.
Similarly \cite{RR2014} denoted by $\tau_j$ the number of zeroes of $\hat{q}_{n-d}$ in the interval $(c_{2j},c_{2j-1})$.
The $d$-tuple $(\tau_1,\dots,\tau_d)$ is called \emph{the signature}.

The polynomials $\hat{p}_n$ are extremal polynomials on the system of $d$ intervals
$[c_{2d},c_{2d-1}]\cup[c_{2d-2},c_{2d-3}]\cup\dots\cup[c_2,c_1]$.
Following the principles formulated by Chebyshev and his school (see \cite{AhiezerAPPROX}), we are going to study the structure of extremal points of $\hat{p}_n$, in particular the set of points of alternance.

Notice that the roots of $\hat{\mathcal{P}}_{2d}(s)$ are simple solutions of the equation $\hat{p}_n^2(s)=1$, while the roots of
$\hat{q}_{n-d}(s)$ are double solutions of the equation $\hat{p}_n^2(s)=1$.
Because of the degrees of the polynomials, these are all points where $\hat{p}_n^2(s)$ equals to unity.

Let us recall that a set of \emph{points of alternance} is, by definition, a subset of the solutions of the equation $\hat{p}_n^2(s)=1$, with the maximal number of elements, such that the signs of $\hat{p}_n$ alter on it.
Such a set is not uniquely determined, however the number of its elements is fixed and equal to $n+1$.

\begin{theorem}\label{th:winding}
\begin{itemize}
 \item[\textbf{(a)}] The winding numbers satisfy:
 $$
 m_{j}=m_{j+1}+\tau_{j}+1,
 \quad
 1\le j\le d.
 $$
 \item[\textbf{(b)}] The winding numbers are strictly decreasing:
 $$
 m_{d-1}<m_{d-2}<\dots<m_1<m_0.
 $$
\end{itemize}
\end{theorem}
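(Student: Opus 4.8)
The plan is to read both assertions off the structure of the Pell equation \eqref{eq:pell} together with the Abel--Jacobi integral relations recalled just above, using the identification of $\hat p_n$ as an extremal (generalized Chebyshev) polynomial on the $d$ intervals $E:=[c_{2d},c_{2d-1}]\cup\dots\cup[c_2,c_1]$. First I would count the solutions of $\hat p_n^2(s)=1$ on each band and each gap. Since $\hat p_n^2-1=\hat{\mathcal P}_{2d}\hat q_{n-d}^2$, the simple zeros of $\hat p_n^2-1$ are exactly the $2d$ roots $c_1,\dots,c_{2d}$ of $\hat{\mathcal P}_{2d}$ (the endpoints of the bands), and the double zeros are exactly the $n-d$ roots of $\hat q_{n-d}$; by the degree count ($2n = 2d + 2(n-d)$) there are no further solutions. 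Inside a band $[c_{2j},c_{2j-1}]$ the polynomial $\hat p_n^2-1$ is $\le 0$ (this is the defining extremal property), and its interior zeros are precisely the $\tau_j$ zeros of $\hat q_{n-d}$ lying there, each of even order; in a gap $(c_{2j-1},c_{2j-2})$ the quantity $\hat{\mathcal P}_{2d}$ has a fixed sign and $\hat p_n^2-1\ge 0$, so any zero of $\hat q_{n-d}$ there would be double with $\hat p_n^2-1$ touching $0$ from above. The key combinatorial fact is that, between two consecutive points where $|\hat p_n|$ attains its maximum value $1$ with a genuine sign change (i.e.\ between consecutive alternance points), $\hat p_n$ is strictly monotone; conversely each band contributes one ``arch'' for every pair of consecutive alternance points in it.

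Next I would localize the alternance points. There are $n+1$ of them. On each band $[c_{2j},c_{2j-1}]$, the two endpoints $c_{2j}$ and $c_{2j-1}$ are solutions of $\hat p_n^2=1$, and interior to the band the equioscillation forces the alternance set to include, besides the two endpoints, exactly one point strictly between consecutive interior zeros of $\hat q_{n-d}$ — that is, $\tau_j+1$ internal maxima of $|\hat p_n|$ separated by the $\tau_j$ interior (double) zeros — giving $\tau_j+2$ alternance points counted with the shared endpoints. Running $j$ from $1$ (rightmost band, containing $c_1=1/b_1$ as its right endpoint and $c_2$ as its left) through $j=d$ and being careful about whether $c_{2d}=0$ is an endpoint of the last band, summing $(\tau_j+1)$ over the $d$ bands plus one for the final endpoint yields $\sum_{j=1}^d \tau_j + d + 1 = n+1$ alternance points, hence $\sum_{j=1}^d\tau_j = n-d$, consistent with $\deg\hat q_{n-d}=n-d$. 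I would then compute $m_j$ as a winding-type count: $m_j$ is the number of impacts along the trajectory where $\lambda_j$ hits a fixed endpoint of its interval, and translating the Abel--Jacobi relation (the displayed system with the integrals $\int s^k/\sqrt{\hat{\mathcal P}_{2d}}$) shows $m_{j-1}$ equals the number of arches of $\hat p_n$ over the $j$-th band region, i.e.\ the number of consecutive-alternance-point pairs in and to one side of that band. Comparing the counts over the $j$-th and $(j+1)$-th bands, the difference is exactly $\tau_j+1$: the $\tau_j$ extra interior oscillations forced by the zeros of $\hat q_{n-d}$ in band $j$, plus one for the endpoint $c_{2j}$ shared between the bands. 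This gives part \textbf{(a)}: $m_j = m_{j+1}+\tau_j+1$ for $1\le j\le d$ (with the convention $m_d=0$).

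Part \textbf{(b)} is then immediate: since $\tau_j\ge 0$ we get $m_j = m_{j+1}+\tau_j+1 \ge m_{j+1}+1 > m_{j+1}$, so $m_{d-1}<m_{d-2}<\dots<m_1<m_0$. Telescoping \textbf{(a)} also records $m_0=n=\sum_{j=1}^{d}(\tau_j+1)=(n-d)+d$, a useful consistency check. The main obstacle I expect is the bookkeeping in part \textbf{(a)}: pinning down precisely which gap-endpoints among $c_1,\dots,c_{2d}$ are alternance points and which are not, handling the parity constraint (recall $m_j$ is even when $b_{2j-2}$ or $b_{2j-1}\in\{a_1,\dots,a_d\}$, which interacts with whether $\hat q_{n-d}$ can vanish at a band endpoint), and matching the sign pattern $(-1)^j$ in the Abel--Jacobi relation to the orientation of the arches so that the integer read off the oscillation count is genuinely $m_{j-1}$ rather than $n-m_{j-1}$ or some reflected quantity. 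Once the dictionary ``interior zeros of $\hat q_{n-d}$ in band $j$'' $\leftrightarrow$ ``$\tau_j$'' $\leftrightarrow$ ``extra oscillations of $\hat p_n$'' $\leftrightarrow$ ``drop in winding number from $m_{j-1}$ to $m_j$'' is set up cleanly, both (a) and (b) follow without further computation.
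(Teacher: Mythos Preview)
Your approach is essentially the one the paper takes: identify $\hat p_n$ as an extremal polynomial on the union of bands, locate all solutions of $\hat p_n^2=1$ via the Pell equation, and then read off $m_j-m_{j+1}=\tau_j+1$ from the alternance count on the $j$-th band. Part \textbf{(b)} then follows immediately from \textbf{(a)}, exactly as you say.

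The difference is in how the crucial link ``winding number $m_j$ $=$ number of alternance points on the partial union $[c_{2d},c_{2j+1}]$ minus one'' is established. The paper does not derive this from the Abel--Jacobi relation; it invokes it as a known result from the Krein--Levin--Nudel'man theory of generalized Chebyshev polynomials (their \cite{KLN1990}, see also \cite{SodinYu1992}). With that identification in hand, the paper's proof of \textbf{(a)} is two sentences: the difference $m_{j-1}-m_j$ is the number of alternance points on $(c_{2j+1},c_{2j-1}]$, and the structure of the alternance set (which you correctly describe) shows that this equals $\tau_j$ interior double points plus one simple endpoint. You, by contrast, propose to prove that identification yourself by ``translating the Abel--Jacobi relation'', and you honestly flag this translation as the main obstacle. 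It is indeed the substantive step; everything else is bookkeeping. So your proposal is on the right track and your alternance counting is correct, but the step you mark as the difficulty is precisely the one the paper imports wholesale from the literature rather than proving.
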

\begin{proof}
It follows from \cite{KLN1990} (see also \cite{SodinYu1992}) that the number of points of alternance of the polynomial $\hat{p}_n$ on the segment $[c_{2d},c_{2j+1}]$ is equal to $1+m_j$, for
$j\in\{0,\dots,d-1\}$.
The difference $m_{j-1}-m_j$ is thus equal to the number of points of alternance on the interval $[c_{2j+1},c_{2j-1}]$.
According to the structure of the sets of the alternance, that number equals the sum of the numbers of the double points of alternance from the interval $(c_{2j},c_{2j-1})$ and one simple point of alternance at one of the endpoints of the interval.
This proves (a) and (b) follows immediately.
\end{proof}

\begin{corollary}\label{cor:Q}
 All zeroes of $\hat{q}_{n-d}$ are real.
\end{corollary}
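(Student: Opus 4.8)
The plan is to count the zeroes of $\hat{q}_{n-d}$ by locating them among the solutions of $\hat{p}_n^2(s)=1$, using the information just obtained in Theorem \ref{th:winding}. Recall that $\hat q_{n-d}$ has degree $n-d$, so it has at most $n-d$ real zeroes; the strategy is to exhibit exactly that many real zeroes, forcing all of them to be real. By the discussion preceding Theorem \ref{th:winding}, the roots of $\hat q_{n-d}$ are precisely the double solutions of $\hat p_n^2(s)=1$, and every such double solution lies inside one of the open gaps $(c_{2j},c_{2j-1})$ between consecutive intervals of the system $[c_{2d},c_{2d-1}]\cup\dots\cup[c_2,c_1]$ — indeed, on each closed interval $\hat p_n$ is a genuine Chebyshev-like polynomial whose extrema on the interval take values $\pm1$ only at the endpoints, so no interior point of an interval can be a double point where $\hat p_n^2=1$.

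First I would invoke the count of points of alternance from the proof of Theorem \ref{th:winding}: the number of points of alternance of $\hat p_n$ lying in the gap $(c_{2j},c_{2j-1})$ equals $\tau_j$ (the number of double points of alternance there), and by part (a) of Theorem \ref{th:winding} these $\tau_j$ contribute to the telescoping sum $\sum_{j=1}^{d}(\tau_j+1) = \sum_{j=1}^d (m_{j-1}-m_j) = m_0 - m_d = n$. Hence $\sum_{j=1}^d \tau_j = n-d$. Each double point of alternance in a gap is a double root of $\hat p_n^2-1$, hence a root of $\hat q_{n-d}$; counting these over all gaps yields at least $n-d$ real roots of $\hat q_{n-d}$, counted with the multiplicities induced by the factorization $\hat p_n^2-1 = \hat{\mathcal P}_{2d}\,\hat q_{n-d}^2$.

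The remaining point is to check that the $n-d$ real numbers so produced exhaust the zeroes of $\hat q_{n-d}$ with correct multiplicity — i.e. that there are no double roots of $\hat p_n^2-1$ hiding inside the intervals, and that a gap cannot contribute more roots to $\hat q_{n-d}$ than its tally of alternance points. This follows from the observation already recorded before Theorem \ref{th:winding}: because of the degrees, the roots of $\hat{\mathcal P}_{2d}$ together with the roots of $\hat q_{n-d}$ account for \emph{all} solutions of $\hat p_n^2(s)=1$, the former simple and the latter double; so every zero of $\hat q_{n-d}$ is a double solution, every double solution lies in a gap, and within a gap a double solution is exactly a double point of alternance. Therefore the zero set of $\hat q_{n-d}$, with multiplicity, is exactly the multiset of gap double-alternance points, of total size $n-d = \deg \hat q_{n-d}$, and all of them are real.

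I expect the only genuinely delicate step to be the bookkeeping that a double point of alternance in a gap is counted in $\hat q_{n-d}$ with exactly the multiplicity it contributes as a point of alternance (so that the totals match rather than merely giving a lower bound); once Theorem \ref{th:winding} is in hand this is a matter of comparing the factorization $\hat p_n^2-1=\hat{\mathcal P}_{2d}\hat q_{n-d}^2$ degree by degree against the classification of solutions of $\hat p_n^2=1$, and no further analytic input is needed.
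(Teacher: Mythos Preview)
Your telescoping count $\sum_{j=1}^{d}\tau_j=\sum_{j=1}^{d}(m_{j-1}-m_j-1)=m_0-m_d-d=n-d$ is exactly the paper's argument, stated there more tersely as: $\hat p_n$ has $n-d$ double extremal points in the interior of the union of the intervals, and these coincide with the roots of $\hat q_{n-d}$.

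However, your geometric picture is inverted, and one of your supporting sentences is false. The sets $(c_{2j},c_{2j-1})$ for $j=1,\dots,d$ are the \emph{open intervals} of the system $[c_{2d},c_{2d-1}]\cup\dots\cup[c_2,c_1]$, not the gaps between them; the gaps are $(c_{2j+1},c_{2j})$. On the gaps one has $|\hat p_n|>1$, so no solution of $\hat p_n^2=1$, simple or double, can lie there. The double solutions of $\hat p_n^2=1$ are the interior local extrema of $\hat p_n$ with value $\pm1$, and these sit inside the intervals---just as the ordinary Chebyshev polynomial $T_n$ on $[-1,1]$ has $n-1$ interior points where it equals $\pm1$. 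Your assertion that ``no interior point of an interval can be a double point where $\hat p_n^2=1$'' is therefore the opposite of the truth. The counting survives only because the $\tau_j$ are, by definition in the paper, the numbers of zeroes of $\hat q_{n-d}$ in $(c_{2j},c_{2j-1})$, which are the correct sets regardless of the name you attach to them; once that is granted, the degree comparison finishes the proof.
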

\begin{proof}
 The polynomial $\hat{p}_n$ has $n-d$ double extremal points in the interior of the union of the intervals
$(c_{2d}, c_{2d-1}) \cup\dots \cup (c_2, c_1)$. These roots of $\hat{p}_n'$ coincide with the roots of the polynomial $\hat{q}_{n-d}$ of degree $n-d$.
\end{proof}

\begin{remark}\label{rem:ramirez}
Theorem \ref{th:winding} answers affirmatively to Conjectures 1 and 3 from \cite{RR2014}.
The Conjecture 2 from \cite{RR2014} is answered affirmatively in Corollary \ref{cor:Q}.
\end{remark}

\begin{example}\label{ex:d-elliptic}
 Consider the case of a trajectory with non-degenerate caustics with elliptic period $d$.
According to Theorem \ref{th:d}, such trajectory is periodic with period $2d$ in the Cartesian coordinates.
 \cite{RR2014}*{Theorem 13} conjectured the winding numbers of such trajectories, based on \cite{RR2014}*{Conjecture 1}.
 Now, having the Conjecture 1 proved, we certify the winding numbers to be $m_j=2(d-j)$, $j=0,\dots,d-1$.
\end{example}

Now, consider the case $n=d+1$.
Since, according to Theorem \ref{th:d+1}, among $m_i$, even and odd numbers alternate and decrease, and $m_0=d+1$, we get

\begin{proposition}\label{prop:unique}
 \begin{itemize}
  \item[\textbf{(a)}] The winding numbers of the trajectories of period $d+1$ within an ellipsoid in the $d$-dimensional space are
  $$
  (m_0,m_1,\dots, m_{d-1})=(d+1, d, d-1,\dots,3,2).
  $$
  \item[\textbf{(b)}] The signature of such trajectories is
  $(0,0,\dots,0,1)$.
 \end{itemize}
\end{proposition}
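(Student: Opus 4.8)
The plan is to obtain both statements by combining Theorems~\ref{th:d+1} and~\ref{th:winding} with the elementary bookkeeping of winding numbers from Section~\ref{sec:hyper}; essentially no new computation is needed.

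\emph{Part (a).} First I would collect the constraints on the tuple $(m_0,\dots,m_{d-1})$ that we already have: $m_0=d+1$, $m_d=0$, $m_{d-1}\ge1$, the strict decrease $m_0>m_1>\dots>m_{d-1}$ by Theorem~\ref{th:winding}(b), and the alternation of parities. The parity statement is the only one not immediate. By the discussion of Section~\ref{sec:hyper}, each winding number $m_j$ ($0\le j\le d-1$) either counts the contacts of $\T$ with one of its caustics, which is odd by Theorem~\ref{th:d+1}, or counts the intersections of $\T$ with one of the coordinate hyperplanes, which is always even; and using the precise arrangement of the caustics provided by Theorem~\ref{th:d+1} (pairs of the same type in prescribed gaps of $a_1<\dots<a_d$, together with an ellipsoidal caustic when $d$ is even), one checks that these two cases strictly alternate as $j$ increases. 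Hence the parities of $m_0,\dots,m_{d-1}$ alternate, and since $m_0=d+1$ this forces $m_j\equiv d+1-j\pmod 2$. Now the arithmetic closes: $d$ strictly decreasing positive integers with largest term $d+1$ satisfy $m_{d-1}\le 2$, and the parity makes $m_{d-1}$ even, so $m_{d-1}=2$; then the total drop $m_0-m_{d-1}=d-1$ must be realized by $d-1$ unit steps, giving $m_j=d+1-j$.

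\emph{Part (b).} This is shorter. For $n=d+1$ the polynomial $\hat q_{n-d}$ of the Pell equation~\eqref{eq:pell} has degree $1$, hence a single zero; by Corollary~\ref{cor:Q} and its proof this zero is real and lies in the interior of one of the intervals $(c_{2d},c_{2d-1}),\dots,(c_2,c_1)$, so exactly one $\tau_j$ equals $1$ and the others vanish. To see which one, I would use the relation $m_{j-1}-m_j=\tau_j+1$ for $1\le j\le d$, which is what the alternance count in the proof of Theorem~\ref{th:winding}(a) yields: by part (a) one has $m_{j-1}-m_j=1$ for $1\le j\le d-1$, so $\tau_1=\dots=\tau_{d-1}=0$, while $m_{d-1}-m_d=2$ gives $\tau_d=1$. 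Thus the signature is $(0,\dots,0,1)$, compatibly with $\sum_j\tau_j=n-d=1$.

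The single real obstacle is the parity-alternation claim used in part (a): establishing it is precisely where the full content of Theorem~\ref{th:d+1} — the exact positions of the caustics and the oddness of all contact numbers — is needed, and it is also the place where one must be careful about which elliptic-coordinate interval (and hence which endpoints $b_i$) each $m_j$ is attached to. Once that is in place, both parts amount to one line of arithmetic apiece.
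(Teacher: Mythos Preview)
Your proposal is correct and follows essentially the same approach as the paper: the paper's one-line argument is precisely that, by Theorem~\ref{th:d+1}, the $m_j$ alternate in parity and, by Theorem~\ref{th:winding}, strictly decrease from $m_0=d+1$, which forces $m_j=d+1-j$; your write-up simply unpacks this and then reads off the signature from the relation $m_{j-1}-m_j=\tau_j+1$ established in the proof of Theorem~\ref{th:winding}(a). Your caution about the parity-alternation step is well placed --- that is exactly where the ``odd number of touchings'' clause of Theorem~\ref{th:d+1} and the precise location of the caustics are used --- but once granted, the rest is indeed the one line of arithmetic you describe.
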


\begin{theorem}
 For a given ellipsoid from a confocal pencil in the Euclidean space $\mathbf{E}^d$, the set of caustics which generates $(d+1)$-periodic trajectories is unique, if it exists.
\end{theorem}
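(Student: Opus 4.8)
The plan is to reduce the statement to an elementary fact about a single polynomial attached to the ellipsoid, and then read off uniqueness from the position of that polynomial's critical values. First I would invoke relation~(a) of the Corollary following Theorem~\ref{th:cayley-sym}: if a $(d+1)$-periodic trajectory with non-degenerate caustics $\Q_{\alpha_1},\dots,\Q_{\alpha_{d-1}}$ exists, then there are a first-degree polynomial $p_1(s)$ and a constant $c$ with
$$
p_1^2(s)\prod_{j=1}^{d-1}\Bigl(s-\frac1{\alpha_j}\Bigr)-A(s)=c,\qquad A(s):=s\prod_{j=1}^{d}\Bigl(s-\frac1{a_j}\Bigr),
$$
where $A$ depends only on $\E$. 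Comparing the coefficients of $s^{d+1}$ forces $p_1$ to be monic, $p_1(s)=s-s_0$, so the relation becomes $(s-s_0)^2\prod_{j=1}^{d-1}(s-1/\alpha_j)=A(s)+c$. The whole question then reduces to: for which constants $c$ is $A+c$ divisible by the square of a linear factor, and is the answer forced once the interlacing of Theorem~\ref{th:d+1} is imposed?

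Next I would analyse $A$, which has the $d+1$ distinct real roots $0<1/a_d<\dots<1/a_1$. By Rolle's theorem and a degree count, $A'$ has exactly $d$ simple real zeros $\xi_1<\dots<\xi_d$, one in each open gap between consecutive roots of $A$; in particular $A'$ and $A''$ share no zero, so $A+c$ never has a zero of multiplicity $\ge3$. Hence in $(s-s_0)^2\prod(s-1/\alpha_j)=A+c$ the number $s_0$ must be one of the $\xi_i$, so $c=-A(\xi_i)$; moreover $s_0$ cannot be any of the $1/\alpha_j$ (that would produce a triple zero), so the $d-1$ simple zeros of $A+c$ are exactly the $1/\alpha_j$ and the double zero sits at $s_0$. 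On each bounded gap $A$ vanishes at both endpoints and has one interior critical point, hence is a one-humped bump, so $A+c$ has there either $0$ zeros, a single double zero (attained only at the bump's extremum), or $2$ simple zeros; on each of the two unbounded intervals $A$ is strictly monotone and so carries at most one zero of $A+c$, necessarily simple.

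The crucial step is to locate everything by means of the interlacing of Theorem~\ref{th:d+1}. Under $s\mapsto1/s$ it confines the $1/\alpha_j$ to a prescribed list of gaps: for even $d$, one of them to the unbounded interval $(1/a_1,\infty)$ and the others, two per gap, to the gaps $(1/a_{j+1},1/a_j)$ with $j\in\{2,4,\dots,d-2\}$; for odd $d$, two per gap to the gaps $(1/a_{j+1},1/a_j)$ with $j\in\{1,3,\dots,d-2\}$. A short sign computation — carried out separately for the two parities, since $A$ then has odd, resp.\ even, degree — should show that the gaps on this list are bumps of one fixed sign and are in fact \emph{all} the bumps of that sign except the leftmost gap $(0,1/a_d)$; that the presence of a zero of $A+c$ in each listed gap pins down $\sign c$, whence the opposite-sign bumps and the interval $(-\infty,0)$ carry no zero of $A+c$; and that each listed gap holds exactly two simple zeros (plus one simple zero on $(1/a_1,\infty)$ in the even case), which together account for all $d-1$ simple zeros of $A+c$. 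The remaining double zero of $A+c$ must then lie in a bump of the fixed sign; since the listed gaps exhaust all such bumps but $(0,1/a_d)$, and $(0,1/a_d)$ holds none of the $1/\alpha_j$, the double zero is in $(0,1/a_d)$, i.e.\ $s_0=\xi_1$ and $c=-A(\xi_1)$. Consequently
$$
\prod_{j=1}^{d-1}\Bigl(s-\frac1{\alpha_j}\Bigr)=\frac{A(s)-A(\xi_1)}{(s-\xi_1)^2}
$$
is determined by $\E$ alone, so the set $\{\alpha_1,\dots,\alpha_{d-1}\}$, and in particular the set of caustics, is unique; a trajectory with a degenerate caustic lies in a coordinate hyperplane, and the claim then follows by induction on $d$.

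The main obstacle is precisely the bookkeeping behind this interlacing analysis: one has to verify, in each parity, that Theorem~\ref{th:d+1} really confines the $1/\alpha_j$ to the stated gaps, that this pins down $\sign c$, and that the exact count of zeros of $A+c$ over all bounded gaps and both unbounded rays leaves $(0,1/a_d)$ as the only possible home for the double zero. It is instructive to check $d=2$ and $d=3$ by hand: $d=2$ recovers the well-known uniqueness of the caustic in the planar (triangle) case, while $d=3$ yields the fact used in the three-dimensional catalogue of Section~\ref{sec:examples}.
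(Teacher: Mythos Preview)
Your argument is correct and, at its endpoint, coincides with the paper's: both proofs reduce to the single polynomial $A(s)=s\prod_{j=1}^d(s-1/a_j)$ (the paper's $r_{d+1}$), identify the distinguished critical point $\xi_1\in(0,1/a_d)$ (the paper's $\gamma$), and read off the caustic parameters as the simple roots of $A(s)-A(\xi_1)$. The substantive difference is in how the double root is pinned to the interval $(0,1/a_d)$. The paper invokes the extremal-polynomial machinery of Section~\ref{sec:poly}: Proposition~\ref{prop:unique}, which rests on Theorem~\ref{th:winding} and the Krein--Levin--Nudelman theory, gives signature $(0,\dots,0,1)$ and hence places the zero of $\hat q_1$ in $(c_{2d},c_{2d-1})=(0,1/a_d)$ in one line. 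You bypass Section~\ref{sec:poly} entirely and replace it by an elementary sign-and-count analysis built solely on the caustic interlacing of Theorem~\ref{th:d+1}; the bookkeeping you flag as the ``main obstacle'' indeed goes through in each parity (for even $d$ the bumps carrying the $1/\alpha_j$ are the positive ones, forcing $c<0$; for odd $d$ they are the negative ones, forcing $c>0$; in either case the leftmost gap $(0,1/a_d)$ is the unique remaining bump of the correct sign and must host the double zero). What you buy is a self-contained proof that does not need Theorem~\ref{th:winding}; what the paper's route buys is brevity and a clean fit with the alternance framework used throughout. Two small remarks: comparing top coefficients gives $p_1$ leading coefficient $\pm1$, not necessarily $+1$, though this is immaterial since only $p_1^2$ enters; and your induction-on-$d$ aside for degenerate caustics is not quite straightforward (a $(d+1)$-periodic trajectory in a hyperplane is not a $d$-periodic one in dimension $d-1$), but the paper, like you in the main argument, tacitly restricts to non-degenerate caustics.
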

\begin{proof}
Consider the Pell equation \eqref{eq:pell}.
The polynomial $\hat{p}_{d+1}$ has one ($1=d+1-d$) double point of the alternance, which is the zero of polynomial $\hat{q}_1$:
$\hat{q}_1(\gamma)=0$, with $\gamma\in(c_{2d}=0,c_{2d-1})$, according to Propositon \ref{prop:unique}.
In addition, $\hat{p}_{d+1}(s)$ has $d+1$ simple points of the alternance at the endpoints of the intervals
$[c_{2d},c_{2d-1}]$,\dots,$[c_2,c_1]$.

The following properties of the polynomial $\hat{p}_{d+1}$ follows from the structure and distribution of the points of the alternance, the winding numbers and the signature:
\begin{itemize}
 \item $\hat{p}_{d+1}$ takes value $-1$ at $0$, $1/a_1$, \dots, $1/a_d$;
 \item in $(0,1/a_d)$, $\hat{p}_{d+1}$ has a local maximum equal to unity;
 \item $\hat{p}_{d+1}$ takes value $1$ at $1/\alpha_1$, \dots, $1/\alpha_{d-1}$.
\end{itemize}
See Figures \ref{fig:P4} and \ref{fig:P5} for graphs of $\hat{p}_{4}$ and $\hat{p}_{5}$.

\begin{figure}[h]
\centering
\includegraphics[width=9cm,height=4.2cm]{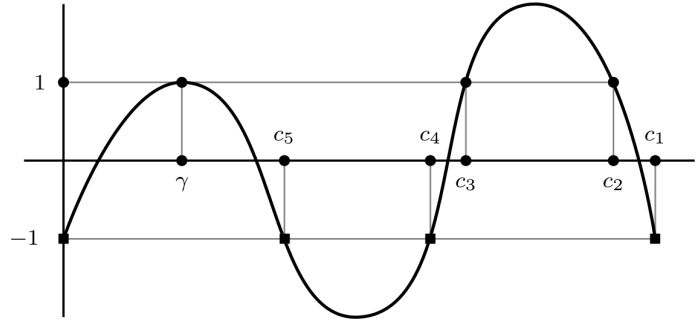}
\caption{The graph of $\hat{p}_4(s)$. The parameters are: $c_1=1/a_1$, $c_2=1/\alpha_1$, $c_3=1/\alpha_2$, $c_4=1/a_2$, $c_5=1/a_3$.}\label{fig:P4}
\end{figure}

\begin{figure}[h]
\centering
\includegraphics[width=11cm,height=4.17cm]{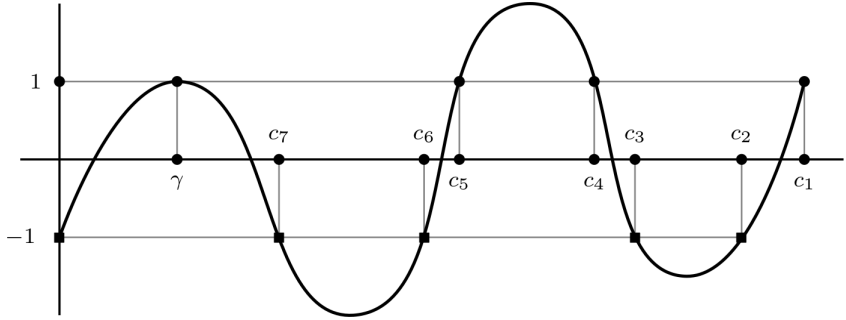}
\caption{The graph of $\hat{p}_5(s)$. The parameters are $c_{7}=1/a_4$, $c_{6}=1/a_{3}$, $c_{3}=1/a_{2}$, $c_{2}=1/a_{1}$; $c_5=1/\alpha_3$, $c_4=1/\alpha_2$, $c_1=1/\alpha_1$. The signature of the trajectory is $(0,0,0,1)$ and the winding numbers $(5,4,3,2)$. }\label{fig:P5}
\end{figure}
For each $d$, there is a unique polynomial satisfying the listed properties and it can be determined as follows.
Denote by $\gamma$ the only point of local extremum of
$$
r_{d+1}(s)=s\prod_{j=1}^{d}\left(s-\frac1{a_j}\right)
$$
in $(0,1/a_d)$ and define:
$$
\hat{p}_{d+1}(s)=\frac{2r_{d+1}(s)}{r_{d+1}(\gamma)}-1,
\quad
\hat{q}_1(s)=s-\gamma.
$$
Now, $1/\alpha_1$, \dots, $1/\alpha_{d-1}$ are solutions of the equation $\hat{p}_{d+1}=1$, different from $\gamma$.
If those solutions exist, they are uniquely determined.
\end{proof}

\begin{remark}
We note that the existence of $(d+1)$-periodic trajectories with non-degenerate caustics will depend on the shape of the ellipsoid $\E$.
The discussion about that in the $3$-dimensional case can be found in \cite{CRR2011}.
\end{remark}

\begin{theorem}
 For a given ellipsoid $\E$, the quadrics $\Q_{\lambda_k}$ are the caustics of $(d+1)$-periodic trajectories if $\lambda_1^{-1}$, \dots, $\lambda_{d-1}^{-1}$ are the solutions of the equation $\hat{p}_{d+1}(s)-1=0$, distinct from $\gamma$.
\end{theorem}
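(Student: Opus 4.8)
The plan is to read the construction in the proof of the previous theorem as an existence statement; its engine is that the polynomial $\hat{p}_{d+1}$ produced there, together with a suitable multiple of $s-\gamma$, solves the Pell equation \eqref{eq:pell}. Recall that $r_{d+1}(s)=s\prod_{j=1}^{d}\left(s-\frac1{a_j}\right)$ depends only on $\E$, that $\gamma\in\left(0,\frac1{a_d}\right)$ is the unique critical point of $r_{d+1}$ in that interval (unique by Rolle's theorem, the $d+1$ zeroes of $r_{d+1}$ being real and distinct), that $r_{d+1}(\gamma)\ne0$, and that
$$
\hat{p}_{d+1}(s)=\frac{2r_{d+1}(s)}{r_{d+1}(\gamma)}-1,
\qquad
\hat{p}_{d+1}(s)-1=\frac{2}{r_{d+1}(\gamma)}\bigl(r_{d+1}(s)-r_{d+1}(\gamma)\bigr).
$$
Because $\gamma$ is a critical point, it is a double zero of the monic degree-$(d+1)$ polynomial $r_{d+1}(s)-r_{d+1}(\gamma)$; by hypothesis its other $d-1$ zeroes are $\frac1{\lambda_1},\dots,\frac1{\lambda_{d-1}}$, so
$$
r_{d+1}(s)-r_{d+1}(\gamma)=(s-\gamma)^2\prod_{k=1}^{d-1}\left(s-\frac1{\lambda_k}\right).
$$

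Next I would set $\hat{\mathcal{P}}_{2d}(s):=r_{d+1}(s)\prod_{k=1}^{d-1}\left(s-\frac1{\lambda_k}\right)$; since $\frac1{a_1},\dots,\frac1{a_d},\frac1{\lambda_1},\dots,\frac1{\lambda_{d-1}},0$ is exactly the list $c_1,\dots,c_{2d}$, this is the polynomial $\hat{\mathcal{P}}_{2d}$ attached in Section \ref{sec:poly} to $\E$ with caustic parameters $\alpha_k=\lambda_k$. With $\hat{q}_1(s):=\frac{2}{r_{d+1}(\gamma)}(s-\gamma)$ the two displays above combine at once to give
$$
\hat{p}_{d+1}^2(s)-1=\bigl(\hat{p}_{d+1}(s)-1\bigr)\bigl(\hat{p}_{d+1}(s)+1\bigr)=\frac{4\,r_{d+1}(s)}{r_{d+1}(\gamma)^2}\bigl(r_{d+1}(s)-r_{d+1}(\gamma)\bigr)=\hat{\mathcal{P}}_{2d}(s)\,\hat{q}_1^2(s),
$$
that is, the Pell equation \eqref{eq:pell} with $n=d+1$ and $n-d=1$. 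By the proposition equating \eqref{eq:pell} with the generalized Cayley condition, $C(d+1,d)$ holds; reformulating this via Lemma \ref{lemma:polinomi} and the argument in its Corollary's proof shows it is exactly the polynomial condition in Theorem \ref{th:cayley}. Combined with the hypothesis that the $\lambda_k$ have the caustic types demanded there, Theorem \ref{th:cayley} (equivalently Theorem \ref{th:cayley-sym}) then produces a $(d+1)$-periodic billiard trajectory within $\E$ whose caustics are exactly $\Q_{\lambda_1},\dots,\Q_{\lambda_{d-1}}$; alternatively one may invoke Theorem \ref{th:genCayley} and note, via Proposition \ref{prop:unique}, that the winding numbers $(d+1,d,\dots,2)$ have greatest common divisor $1$, so the elliptic and the Cartesian periods coincide.

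I expect the only genuine difficulty to lie not in this chain of equivalences but in the hypothesis itself, which is where the shape of $\E$ intervenes (cf.\ the remark above): one must know that $\hat{p}_{d+1}(s)-1$ really possesses $d-1$ real zeroes distinct from $\gamma$, and that they then automatically interlace with $0,\frac1{a_1},\dots,\frac1{a_d}$ in the arrangement of Lemma \ref{lemma:audin}, i.e.\ that the $\lambda_k$ have the caustic types prescribed by Theorem \ref{th:d+1}, so that the $\Q_{\lambda_k}$ are admissible non-degenerate confocal quadrics. This should be read off from the oscillation structure of $\hat{p}_{d+1}$: it takes the value $-1$ at the $d+1$ points $0,\frac1{a_1},\dots,\frac1{a_d}$, the value $+1$ at the interior maximum $\gamma$, and it has exactly one critical point strictly between any two consecutive zeroes of $\hat{p}_{d+1}+1$; consequently, when the remaining $d-1$ solutions of $\hat{p}_{d+1}=1$ are all real they are forced to distribute among the gaps between the reciprocals $\frac1{a_j}$ (and beyond $\frac1{a_1}$, when $d$ is even) precisely as Theorem \ref{th:d+1} demands. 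Together with the previous theorem this yields the sharp picture: admissible caustics of $(d+1)$-periodic trajectories within $\E$ exist if and only if $\hat{p}_{d+1}-1$ has $d-1$ real zeroes other than $\gamma$.
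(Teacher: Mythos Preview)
The paper gives no separate proof of this theorem; it is stated immediately after the uniqueness theorem as a direct consequence of the construction carried out in that proof. Your argument is correct and is precisely the explicit version of what the paper leaves implicit: you verify that the polynomials $\hat p_{d+1}$ and $\hat q_1$ built there actually satisfy the Pell equation \eqref{eq:pell}, and then invoke the proposition identifying \eqref{eq:pell} with the generalized Cayley condition $C(d+1,d)$. That is exactly the intended mechanism, so your approach and the paper's coincide.

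Your second paragraph is a worthwhile addition rather than a different route. The paper's statement is informal and tacitly assumes that the $d-1$ solutions of $\hat p_{d+1}(s)=1$ other than $\gamma$ are real and fall into the intervals dictated by Theorem \ref{th:d+1}; the preceding remark in the paper acknowledges that this depends on the shape of $\E$ but does not spell out the oscillation argument you sketch. Your observation that the alternation structure of $\hat p_{d+1}$ (value $-1$ at the $d+1$ points $0,1/a_d,\dots,1/a_1$, a single interior critical point between consecutive such points, and value $+1$ at $\gamma$) forces any real solutions of $\hat p_{d+1}=1$ to interlace correctly is the right way to close this gap, and it sharpens the statement into the if-and-only-if form you give at the end.
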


\section{Properties of the frequency map}
\label{sec:freq}
In this section, we will prove the injectivity property of the frequency map.

Let us start with two lemmas.

\begin{lemma}\label{lemma:same-type-caustics}
If $\Q_{\alpha}$, $\Q_{\beta}$ are caustics of the same type of a given billiard trajectory within $\E$, then
$\{\alpha^{-1}, \beta^{-1}\}=\{c_{2k+1}, c_{2k}\}$, for some $k$.
\end{lemma}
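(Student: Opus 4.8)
The plan is to translate the statement about ``caustics of the same type'' into the combinatorial description of the arrangement of the $b$'s (equivalently the $c$'s) provided by Lemma \ref{lemma:audin} and Lemma \ref{lemma:winding}. Recall that two confocal quadrics $\Q_{\alpha}$, $\Q_{\beta}$ are of the same type precisely when $\alpha$ and $\beta$ lie between the same two consecutive parameters among $a_1,\dots,a_d$ (i.e.\ there is an index $\ell$ with $a_{\ell}<\alpha,\beta<a_{\ell+1}$, or both lie in $(0,a_1)$, or both lie in $(a_d,+\infty)$ — the last case being excluded for billiard trajectories inside $\E$ since then $b_1>0$). First I would fix the two caustic parameters $\alpha<\beta$ of the same type and show they must be \emph{consecutive} in the ordered list $b_1<\dots<b_{2d-1}$: if some $b_r$ satisfied $\alpha<b_r<\beta$, then since $\alpha,\beta$ are not separated by any $a_k$, that $b_r$ would itself be a caustic parameter $\alpha_i$ of a third, intermediate type, and one checks via Lemma \ref{lemma:audin} that the caustic parameters interlace with the $a$'s in such a way that no two caustics of the same type can have a third caustic strictly between them. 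Hence $\{\alpha,\beta\}=\{b_{2k},b_{2k+1}\}$ or $\{b_{2k-1},b_{2k}\}$ for some $k$.

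Next I would pin down the parity of the indices. Write $\alpha=b_{2j-1}$ or $b_{2j}$ using Lemma \ref{lemma:audin}: the $j$-th caustic parameter $\alpha_j$ occupies slot $2j-1$ or $2j$. If $\alpha=\alpha_j$ and $\beta=\alpha_{j+1}$ are consecutive caustics (necessarily both non-degenerate, so neither equals an $a_k$), then $\alpha\in\{b_{2j-1},b_{2j}\}$ and $\beta\in\{b_{2j+1},b_{2j+2}\}$, and the only way these can be \emph{adjacent} in the $b$-list is $\alpha=b_{2j}$, $\beta=b_{2j+1}$ — which, passing to reciprocals $c_r=1/b_r$ (an order-reversing map), gives $\{\alpha^{-1},\beta^{-1}\}=\{c_{2j},c_{2j+1}\}$, i.e.\ the claimed form with $k=j$. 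The remaining bookkeeping is to confirm that no other pairing of slots is compatible with both $\Q_{\alpha},\Q_{\beta}$ being caustics of the \emph{same} type: since the $a$'s occupy alternate or paired slots complementary to the $\alpha$'s, a pair $\{b_{2k-1},b_{2k}\}$ straddles one of the $a$'s (it contains the slot where an $a$ sits), forcing $\Q_\alpha,\Q_\beta$ to be of \emph{different} types — so only the even–odd pair $\{b_{2k},b_{2k+1}\}$ survives.

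The main obstacle is the careful case analysis of which slots in the ordered list $b_1<\dots<b_{2d-1}$ the parameters $\alpha_j$ may occupy relative to the $a_k$'s: Lemma \ref{lemma:audin} gives $\alpha_j\in\{b_{2j-1},b_{2j}\}$, but to conclude I need the complementary statement that $a_k\in\{b_{2k-1},b_{2k}\}$ as well (which follows from the same lemma by a counting argument, since exactly $k-1$ of the parameters among $\{a_1,\dots,a_{k-1},\alpha_1,\dots,\alpha_{k-1}\}$ that are $\alpha$'s sit in the first $2(k-1)$ slots), and then to verify that ``same type'' is equivalent to the two reciprocals lying in a single interval $(c_{2k+1},c_{2k})$ between two consecutive $c$'s of $a$-origin. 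Once this dictionary between interval-type and slot-parity is set up, the conclusion $\{\alpha^{-1},\beta^{-1}\}=\{c_{2k+1},c_{2k}\}$ is immediate. I would keep the argument short by invoking Lemma \ref{lemma:audin} for the non-degenerate caustics and the definitions from Section \ref{sec:hyper} for the identification of the intervals, rather than re-deriving the arrangement from scratch.
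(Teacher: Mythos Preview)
Your approach is essentially the same as the paper's, and it is correct, but you are making it considerably harder than necessary. The paper's proof is three sentences: it observes directly from Lemma~\ref{lemma:audin} that \emph{exactly one of each pair $\{b_{2i-1},b_{2i}\}$ is a caustic parameter} (the other being some $a_k$); since $\Q_\alpha,\Q_\beta$ of the same type means no $a_k$ lies between $\alpha$ and $\beta$, the two parameters must be adjacent in the $b$-list, and the only way two caustic parameters can be adjacent without both falling into the same pair $\{b_{2i-1},b_{2i}\}$ is $\{\alpha,\beta\}=\{b_{2k},b_{2k+1}\}$. Passing to reciprocals gives the claim. By isolating the ``one caustic per pair'' fact at the outset, the paper collapses your separate consecutiveness and parity steps into one, and avoids entirely the auxiliary statement $a_k\in\{b_{2k-1},b_{2k}\}$ that you flag as the main obstacle.

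One small slip in your write-up: in the first paragraph you say an intermediate $b_r$ between $\alpha$ and $\beta$ would be ``a caustic parameter $\alpha_i$ of a third, intermediate type''. It would in fact be of the \emph{same} type as $\Q_\alpha$ and $\Q_\beta$, since it lies between the same two consecutive $a$'s; the contradiction comes not from a type mismatch but from the one-caustic-per-pair rule forbidding three consecutive $b$'s all being caustic parameters.
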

\begin{proof}
According to Lemma \ref{lemma:audin}, exactly one of each pair $\{b_{2i-1},b_{2i}\}$ is a parameter of a caustic of the trajectory.
Since $\Q_{\alpha}$, $\Q_{\beta}$ are of the same type, $\alpha$ and $\beta$ must be consecutive in the sequence $b_1,\dots,b_{2n-1}$, so
$\{\alpha,\beta\}=\{b_{2k},b_{2k+1}\}$ for some $k$.
\end{proof}

\begin{lemma}[Theorem 2.12 from \cite{PS1999}]\label{lemma:pell-unique}
Let $p_n$, $p_n^*$ be two polynomials of degree $n$, which solve the Pell's equations.
Denote by
$$
\I_d=\cup_{j=0}^{d-1}[c_{2(d-j)}, c_{2(d-j)-1}]
\quad\text{and}\quad
\I_d^*=\cup_{j=0}^{d-1}[c_{2(d-j)}^*, c_{2(d-j)-1}^*]
$$
respectively the sets $\{x\mid |p_n(x)|\le 1\}$ and $\{x\mid |p_n^*(x)|\le 1\}$.
Suppose that:
\begin{itemize} \item [\textrm{(i)}] at least one of the intervals from $\I_d$ coincides with one of the intervals from $\I_d^*$;
\item[\textrm{(ii)}] for each $j\in \{0, \dots, d-1\}$, $c_{2(d-j)}= c_{2(d-j)}^*$ or $c_{2(d-j)-1}= c_{2(d-j)-1}^*$;
\item[\textrm{(iii)}] in each pair of the corresponding intervals
$$
[c_{2(d-j)},c_{2(d-j)-1}]
\quad\text{and}\quad
[c_{2(d-j)}^*,c_{2(d-j)-1}^*]
$$
the polynomials  $p_n$, $p_n^*$  have the same number of extreme points.
\end{itemize}
Then the polynomials $p_n$, $p_n^*$  coincide up to a constant multiplier and sets $\I_d$ and $\I_d^*$ coincide.
\end{lemma}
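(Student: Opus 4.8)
The plan is to reduce the statement to the rigidity claim $\I_d=\I_d^*$, and then to derive that rigidity from the equilibrium measure of the two sets.

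For the reduction I would use that a degree-$n$ polynomial solving a Pell equation is determined, up to sign, by the set $\I_d=\{x\mid|p_n(x)|\le 1\}$: it is a union of exactly $d$ intervals whose $2d$ endpoints are the simple zeros of $p_n^2-1$ and whose $n-d$ interior extreme points are its double zeros, and $\tfrac1n\log\bigl|p_n(z)+\sqrt{p_n^2(z)-1}\bigr|$ is the Green's function $g_{\mathbb C\setminus\I_d}(z,\infty)$ of the complement with pole at infinity; hence $p_n$ is recovered from $\I_d$ together with the sign of its leading coefficient, and it suffices to prove $\I_d=\I_d^*$, after which $p_n=\pm p_n^*$. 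I would also observe that the harmonic measure at infinity of the $j$-th component of $\I_d$ equals $m_j/n$, where $m_j$ is the corresponding winding number, and that by the alternation count in the proof of Theorem~\ref{th:winding} each $m_j$ is read off from the number of extreme points of $p_n$ on the components; consequently hypothesis~(iii) forces $m_j=m_j^*$ for every $j$, so $\I_d$ and $\I_d^*$ carry the same harmonic-measure data.

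For the rigidity I would count parameters. A union of $d$ intervals is described by its $2d$ ordered endpoints, and the conditions that its $j$-th component carry harmonic measure $m_j/n$ ($j=1,\dots,d$) --- which sum to $1$, hence amount to $d-1$ independent real-analytic conditions on the endpoints --- cut out a $(d+1)$-dimensional family $\mathcal F$ containing both $\I_d$ and $\I_d^*$. Hypotheses~(i) and~(ii) prescribe that $\I_d^*$ share $d+1$ of its endpoints with $\I_d$: one endpoint of each component by~(ii), plus the second endpoint of the component singled out by~(i). Fixing those $d+1$ endpoints leaves $d-1$ free endpoints of $\I_d^*$, constrained by the remaining $d-1$ harmonic-measure equations; since $\I_d$ itself solves this $(d-1)\times(d-1)$ system, it would remain to invoke the classical monotonicity of harmonic measure on systems of intervals --- the map from the $d-1$ free endpoints to the corresponding vector of masses has nonzero Jacobian (each mass depends strictly monotonically on each endpoint: displacing an endpoint outward strictly increases the mass of its own component and strictly decreases the others) and is globally injective onto an open simplex; see \cites{PS1999,SodinYu1992}. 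Hence $\I_d^*=\I_d$, and the reduction finishes the proof.

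The main obstacle is the global injectivity of the mass map over the whole $(d+1)$-parameter family: the local, Jacobian, part is a short computation with the normal derivative of the Green's function, but upgrading it to a one-to-one correspondence, together with checking that hypotheses~(i)--(iii) encode exactly the $d+1$ constraints needed --- in particular that~(iii) is what keeps $\I_d^*$ in the same connected component of $\mathcal F$ as $\I_d$, with the same winding numbers and signature --- is the substantive content. An alternative I would keep in mind, avoiding potential theory, argues directly from Chebyshev alternation: normalize the sign of $p_n^*$ so that it agrees with $p_n$ at one endpoint of the common interval from~(i), propagate the matching of signs through all $n+1$ points of alternance that~(i)--(iii) force, and conclude that $p_n-p_n^*$ (or $p_np_n^*-1$) would otherwise have more zeros than its degree permits unless it vanishes identically; there the delicate step is the sign bookkeeping across the non-shared components.
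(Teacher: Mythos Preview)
The paper does not prove this lemma: it is quoted verbatim as Theorem~2.12 of \cite{PS1999} and used as a black box in the proof of Theorem~\ref{th:signature-caustics}. So there is no ``paper's own proof'' to compare against.

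Your outline is structurally sound --- the reduction to $\I_d=\I_d^*$ via the Green's function representation is correct, the parameter count ($2d$ endpoints, $d-1$ independent mass constraints, $d+1$ endpoints pinned by hypotheses~(i)--(ii)) is right, and you correctly isolate global injectivity of the endpoint-to-mass map as the crux. But there is a circularity issue: you defer that global injectivity to \cite{PS1999}, which is where the lemma itself comes from. If you are going to prove Theorem~2.12 of \cite{PS1999}, you cannot cite \cite{PS1999} for its hardest step. Citing \cite{SodinYu1992} instead may be legitimate, but you would need to check that the relevant injectivity statement there does not in turn rest on the Peherstorfer--Schiefermayr result.

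Your alternation-based alternative is the more promising self-contained route and is in fact close in spirit to how such rigidity statements are proved in the extremal-polynomial literature: normalize signs at one endpoint of the shared interval from~(i), use~(ii) and~(iii) to force the full alternation pattern of $p_n$ and $p_n^*$ to interlace in a controlled way, and then count sign changes of $p_n-p_n^*$ (or zeros of $p_np_n^*-1$) to exceed the degree bound unless the difference vanishes. The ``delicate sign bookkeeping across the non-shared components'' you flag is exactly the work that replaces the potential-theoretic injectivity, and it is where a complete proof would have to be explicit.
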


\begin{theorem}\label{th:signature-caustics}
Given an ellipsoid $\E$ in $d$-dimensional space and $n>d$ an integer.
There is at most one set of caustics $\{\alpha_1, \dots, \alpha_{d-1}\}$ of the given types, which generates $n$-periodic trajectories  within $\E$ having a prescribed signature.
\end{theorem}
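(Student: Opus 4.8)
The statement is essentially a uniqueness result for the extremal polynomial $\hat p_n$ attached to an $n$-periodic trajectory, so the natural strategy is to reduce it to Lemma \ref{lemma:pell-unique} (Theorem 2.12 of \cite{PS1999}). Suppose $\{\alpha_1,\dots,\alpha_{d-1}\}$ and $\{\alpha_1^*,\dots,\alpha_{d-1}^*\}$ are two sets of caustics of the prescribed types generating $n$-periodic trajectories within $\E$ with the same signature $(\tau_1,\dots,\tau_d)$. By the Proposition on the Pell equation, to each set corresponds a polynomial pair $(\hat p_n,\hat q_{n-d})$ and $(\hat p_n^*,\hat q_{n-d}^*)$ solving \eqref{eq:pell}, with the associated interval systems $\I_d$, $\I_d^*$ whose endpoints are $\{c_1,\dots,c_{2d-1},c_{2d}=0\}$ and $\{c_1^*,\dots,c_{2d-1}^*,c_{2d}^*=0\}$. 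Here $c_{2d}=c_{2d}^*=0$ and, since the caustics have the \emph{same types}, Lemma \ref{lemma:audin} forces the same arrangement pattern: each $b_{2j-1}$ or $b_{2j}$ that is a reciprocal of some $a_k$ is common to both systems (these are the fixed points $1/a_1,\dots,1/a_d$ of the ellipsoid, identical in both configurations), while the ones coming from caustics may move. I would first check that this gives hypothesis (ii) of Lemma \ref{lemma:pell-unique}: for each $j$, at least one endpoint of the $j$-th interval is one of the fixed values $1/a_k$ or $0$, hence equal in both systems. This is where Lemma \ref{lemma:same-type-caustics} enters: same-type caustics occupy the slots $\{c_{2k},c_{2k+1}\}$, so in each interval $[c_{2(d-j)},c_{2(d-j)-1}]$ the endpoint that is \emph{not} a caustic reciprocal is an $a_k$-reciprocal (or $0$), which is shared.

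For hypothesis (i) — that at least one interval coincides — I would argue that the extreme interval $[c_{2d},c_{2d-1}]=[0,c_{2d-1}]$ has left endpoint $0$ in both systems, and its right endpoint is $c_{2d-1}=1/b_1$. Now $b_1$ is either $a_1$ (shared) or $\alpha_1$; but when $n$ is odd one of the caustics is an ellipsoid, forcing $b_1=\alpha_1$ to lie in $(0,a_1)$, and in the even case $b_1=a_1$. Either way I must pin down this endpoint using the signature. The cleanest route: the signature records $\tau_j=$ number of zeros of $\hat q_{n-d}$ in $(c_{2j},c_{2j-1})$, and via Theorem \ref{th:winding} the signature determines the winding numbers $m_j$, hence determines how many extreme points $\hat p_n$ has on each interval of $\I_d$ — this is exactly hypothesis (iii). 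So hypotheses (ii) and (iii) come for free from "same types" plus "same signature"; the only real content is securing (i). For that, I would observe that once (ii) and (iii) hold, the leftmost interval $[0,c_{2d-1}]$ and $[0,c^*_{2d-1}]$ share their left endpoint and carry the same number of extreme points of their respective polynomials, and the standard Chebyshev/Pell normalization on the first interval (where $\hat p_n$ runs monotonically, or with the prescribed number of oscillations, from $\pm1$ at $0$ to $\pm1$ at $c_{2d-1}$) forces $c_{2d-1}=c^*_{2d-1}$; alternatively one applies Lemma \ref{lemma:pell-unique} with (i) replaced by the common left endpoint $0$ of the extreme intervals, which in the Peherstorfer–Schiefermayr framework is enough to anchor the configuration.

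Having verified (i)–(iii), Lemma \ref{lemma:pell-unique} yields $\hat p_n = \mathrm{const}\cdot\hat p_n^*$ and $\I_d=\I_d^*$. Equality of the interval systems means $\{c_1,\dots,c_{2d-1}\}=\{c_1^*,\dots,c_{2d-1}^*\}$ as ordered sets of endpoints, and since the $a_k$-reciprocals occupy the same slots in both, the remaining slots — occupied by the caustic reciprocals $1/\alpha_1,\dots,1/\alpha_{d-1}$ — must agree as well. Therefore $\{\alpha_1,\dots,\alpha_{d-1}\}=\{\alpha_1^*,\dots,\alpha_{d-1}^*\}$, which is the claim.

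**Main obstacle.** The delicate point is hypothesis (i): translating "same signature" into "some interval of $\I_d$ literally coincides with some interval of $\I_d^*$," rather than merely "corresponding endpoints alternate." The fixed endpoints $1/a_k$ are shared, but an interval has \emph{two} endpoints, and a priori the caustic-endpoint could differ between the two configurations for every single interval, so no interval would coincide outright. The resolution has to exploit the extremal/minimality characterization of $\hat p_n$ on the leftmost interval together with $c_{2d}=c_{2d}^*=0$ and the matched oscillation count there; I expect this to require either a short direct argument about the normalized Chebyshev polynomial on $[0,c_{2d-1}]$, or a careful reading of \cite{PS1999} to confirm that the common endpoint $0$ already substitutes for (i). Everything else — (ii) from Lemma \ref{lemma:same-type-caustics}, (iii) from Theorem \ref{th:winding} — is bookkeeping.
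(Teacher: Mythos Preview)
Your overall strategy---verify hypotheses (i)--(iii) of Lemma \ref{lemma:pell-unique} and conclude---is exactly the paper's proof. However, you have an indexing slip that manufactures a nonexistent obstacle. You write $c_{2d-1}=1/b_1$, but since $c_j=1/b_j$ and the $b_j$ are increasing, the $c_j$ are \emph{decreasing}; thus $c_{2d-1}=1/b_{2d-1}=1/a_d$ (the paper notes explicitly that $b_{2d-1}=a_d$). So the leftmost interval is $[c_{2d},c_{2d-1}]=[0,\,a_d^{-1}]$, whose \emph{both} endpoints are determined by the boundary ellipsoid $\E$ alone and are therefore identical in both configurations. Hypothesis (i) is then immediate---no appeal to the signature, to oscillation counts on the first interval, or to a refined reading of \cite{PS1999} is needed.

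With that correction your argument collapses to the paper's three lines: (i) holds because $[0,a_d^{-1}]$ is shared; (ii) follows from Lemma \ref{lemma:same-type-caustics} exactly as you say; (iii) follows because the prescribed signature fixes the number of interior extreme points of $\hat p_n$ on each interval (via Theorem \ref{th:winding}). The ``main obstacle'' you flag is an artifact of the index error; once fixed, nothing delicate remains.
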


\begin{proof}
The assumption (i) of Lemma \ref{lemma:pell-unique} is satisfied since $[c_{2d}, c_{2d-1}]=[0, a_d^{-1}]$, and
the assumption (ii) due to the Lemma \ref{lemma:same-type-caustics}.
The assumption (iii) follows from the fact that the signature is given, which completes the proof of this Theorem.
\end{proof}

 In order to extend the considerations about winding numbers to the cases of irrational frequencies and non-periodic trajectories, we  employ the potential theory and harmonic analysis, see \cite{Si2011}. We consider the differential of the third kind defined by the conditions:
\begin{equation}\label{eq:diff}
\int_{c_{2j+1}}^{c_{2j}}\frac{\eta_{d-1}(s)}{\sqrt{\hat{\mathcal{P}}_{2d}(s)}}\, ds =0, \, j=d-1, d-2, ..., 1,
\end{equation}
where $\eta_{d-1}$ is a monic polynomial of degree $d-1$.
 Then the equilibrium measure $\mu$, defined by:
$$
\mu([c_{2k}, c_{2k-1}])=\frac{1}{\pi}\int_{c_{2k}}^{c_{2k-1}}\frac{|\eta_{d-1}(s)|}{\sqrt{|\hat{\mathcal{P}}_{2d}(s)|}}\, ds,
$$
induces  a map $\bf m:\mathcal R^{2d-2}_<  \rightarrow  \mathcal R^{d-1}_+$:
$$
\bf m: (c_{2d-2}, \dots, c_1) \mapsto (\mu([c_{2d-2}, c_{2d-3}]),...,\mu([c_{2}, c_{1}])),
$$
where $\mathcal R^{2d-2}_< $ denotes the finite increasing sequences of $2d-2$ of real numbers.
We use the considerations parallel to the proof of the Bogotaryev-Peherstorfer-Totik Theorem (Theorem 5.6.1 from \cite{Si2011}) and observe that they can be extended to all possible distributions of caustic parameters $\alpha$'s vs. the confocal family parameters $a$'s  as governed by  Lemma \ref{lemma:audin}.
Assuming that $c$'s reciprocal to $a$'s are fixed, and $\hat c$'s reciprocal to $\alpha$'s ($\hat c_i=\alpha_i^{-1}$) vary one proves that the above map understood now as a function of varying $\hat c$'s $\bf m (\hat c_{d-1}, \dots, \hat c_1)$ is everywhere locally injective: by using the properties of the polynomials $\eta_{d-1}$ and $\hat{\mathcal{P}}_{2d}$  and their derivatives with respect to the variable $\hat c$s as well as the derivative of the quotient
$$
\frac{|\eta_{d-1}(s)|}{\sqrt{\hat{\mathcal{P}}_{2d}(s)}},
$$

one gets that the Jacobian of the equilibrium measure map is diagonally dominant, thus invertible. From there we get the local injectivity of the frequency map,
defined by (see also Theorem 5.12.12 from \cite{Si2011}):
$$
F(\alpha_1, \dots, \alpha_{d-1})=(\mu([c_{2d}, c_{2d-1}]),\sum_{k=d-1}^{d}\mu([c_{2k}, c_{2k-1}]),...,\sum_{k=1}^d\mu([c_{2k}, c_{2k-1}])).
$$
Notice that a connected component of the set of the parameters of non-degenerate caustics consists of all possible parameters of certain given types of the caustics.
Thus Theorem \ref{th:signature-caustics} implies that the frequency map never attains one rational value twice on such a connected component.
This, together with the local injectivity, leads to the global injectivity of the frequency map.
\begin{theorem}\label{th:1-1a}
The frequency map for the billiard within ellipsoid is injective on each connected component of the set of the parameters of non-degenerate caustics.
\end{theorem}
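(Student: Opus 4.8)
The plan is to combine a local statement — that the frequency map $F$ is a local diffeomorphism on each connected component of the set of non-degenerate caustic parameters — with the rigidity of rational values supplied by Theorem~\ref{th:signature-caustics}, and then to upgrade local injectivity to global injectivity by a density argument. First I would fix a connected component $U$. By Lemma~\ref{lemma:audin} the combinatorial arrangement of the numbers $c_1,\dots,c_{2d-1}$ relative to the reciprocals $a_j^{-1}$ is locally constant, hence constant on $U$; thus the ordered system of $d$ intervals $\I_d=\cup_{j=0}^{d-1}[c_{2(d-j)},c_{2(d-j)-1}]$ carrying the equilibrium measure $\mu$ deforms continuously over $U$ without changing its combinatorial type, and the monic polynomial $\eta_{d-1}$ of \eqref{eq:diff}, the measure $\mu$, and hence $\mathbf{m}$ and $F$ depend real-analytically on the free parameters $\hat c_i=\alpha_i^{-1}$. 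Since $F$ is obtained from $\mathbf{m}$ by the invertible linear change of variables given by partial sums, it suffices to treat $\mathbf{m}$.

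The core step is to show that the differential of $\mathbf{m}$ is nonsingular at every point of $U$. Here I would run the argument behind the Bogatyrev--Peherstorfer--Totik theorem (Theorem~5.6.1 of \cite{Si2011}): differentiate the $d-1$ normalization conditions \eqref{eq:diff} in each variable $\hat c_i$ to solve for $\partial\eta_{d-1}/\partial\hat c_i$, then differentiate the mass integrals $\mu([c_{2k},c_{2k-1}])$ using in addition $\partial\hat{\mathcal{P}}_{2d}/\partial\hat c_i$ and the derivative of the quotient $|\eta_{d-1}|/\sqrt{\hat{\mathcal{P}}_{2d}}$. One checks that the $i$-th diagonal entry of the resulting Jacobian receives a contribution exhibiting a square-root-type singularity at the moving endpoint $\hat c_i$ of the $i$-th interval, while the off-diagonal entries remain uniformly bounded, so the Jacobian is strictly diagonally dominant, hence invertible. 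I expect this to be the main obstacle: the statement of \cite{Si2011} is formulated for one fixed combinatorial pattern of endpoints, and one must verify that the estimate goes through, with the correct sign of the singular term, for \emph{every} pattern permitted by Lemma~\ref{lemma:audin}. Granting it, $F$ is a local diffeomorphism on $U$, in particular an open map.

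It remains to promote this to global injectivity. Suppose $x\in U$ and all coordinates of $F(x)$ are rational; clearing denominators, the trajectory with caustic parameters $\alpha_i=\hat c_i(x)^{-1}$ is periodic, the winding numbers are recovered from the vector $F(x)$, and through Theorem~\ref{th:winding}(a) the signature $(\tau_1,\dots,\tau_d)$ is determined as well. Since the \emph{types} of the caustics are constant on $U$, Theorem~\ref{th:signature-caustics} forces $F^{-1}(F(x))\cap U=\{x\}$. Now assume, for contradiction, that $F(x)=F(y)$ with $x\ne y$ in $U$. Choose disjoint open sets $V\ni x$ and $W\ni y$ on which $F$ is injective; then $F(V)\cap F(W)$ is a nonempty open subset of $\mathbf{E}^{d-1}$, so it contains a point $r$ with rational coordinates. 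Its preimages in $V$ and in $W$ are two distinct points of $U$ mapping to the rational value $r$, contradicting the uniqueness just established. Hence $F$ is injective on $U$, which is the assertion of the theorem. (As a final remark, specializing the whole argument to $d=2$ and reading off $\tau_1$ recovers the classical monotonicity of the rotation number, with the diagonal dominance reducing to the strict positivity of a single derivative.)
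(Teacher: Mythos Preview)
Your proposal is correct and follows essentially the same approach as the paper: local injectivity via the diagonal-dominance argument of the Bogatyrev--Peherstorfer--Totik theorem, combined with Theorem~\ref{th:signature-caustics} to force uniqueness of rational preimages on a connected component, and then an openness/density argument to upgrade to global injectivity. Your write-up is in fact more explicit than the paper's about the last step (the paper simply asserts that ``this, together with the local injectivity, leads to the global injectivity''), and you correctly identify the need to check that the diagonal-dominance estimate persists across all caustic configurations allowed by Lemma~\ref{lemma:audin}.
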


\begin{remark}
Using \cite{Ap1986} one can show that in the case of periodic trajectories
the frequency map coincides with the one defined through the winding numbers and the numbers of points of alternance. Alternatively, by using bilinear relations between the differentials of the first and third kind, see for example \cite{Sp1957}, one can easily get the following relations:
\begin{equation}\label{eq:frequency}
\sum_{i=1}^{d-1}y_i\int_{\bf {a_i}}\omega_j=2y_d\int_{c_1}^\infty \omega_j,
\end{equation}
where the cycle $\bf {a_i}$ encircles the gap $[c_{2(d+1-i)-1}, c_{2(d-i)}]$ clockwise while, cycles
$\bf {b_i}$  are going around $[c_{2d}, c_{2(d+1-i)-1}]$ clockwise. The differentials $\omega_j$ form a basis of holomorphic differential on the curve
$$
\hat \Curve: t^2=\hat{\mathcal{P}}_{2d}(s),
$$
with
$$
\omega_j=\frac{s^{j-1}}{\sqrt{\hat{\mathcal{P}}_{2d}(s)}}ds, \quad j=1,\dots, d-1.
$$
If we denote the components of the map $F$ as $(f_1, \dots, f_d)$, then
$$
|y_i|=f_i.
$$
We  also observe the monotonicity  property of the frequency map: $f_1<f_2<\dots<f_d$.

\end{remark}

\begin{corollary}\label{cor:rotation2}
Given a confocal pencil of conics in the plane
\begin{equation}\label{eq:confocal2}
C_{\lambda}: \frac{x^2}{a-\lambda} + \frac {y^2}{b-\lambda}=1, \quad a>b>0.
\end{equation}
  Then the rotation number
\begin{equation}\label{eq:rotation}
\rho(\lambda)
=
\rho(\lambda, a, b)
=
\frac
{\int_0^{\min\{b, \lambda\}}\frac{dt}{\sqrt{(\lambda-t)(b-t)(a-t)}}}
{2\int^a_{\max\{b, \lambda\}}\frac{dt}{\sqrt{(\lambda-t)(b-t)(a-t)}}}
\end{equation}
is a  strictly monotonic function on each of the intervals $(-\infty, b)$ and $(b, a)$.
\end{corollary}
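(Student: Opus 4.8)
The plan is to deduce Corollary~\ref{cor:rotation2} as the two-dimensional case of Theorem~\ref{th:1-1a}. First I would set up the dictionary with Section~\ref{sec:freq}: for $d=2$ the ellipse is $C_0$, the confocal conic $C_\lambda$ is the single caustic $\Q_{\alpha_1}$, the ordered constants are $\{b_1,b_2,b_3\}=\{\lambda,b,a\}$ with $b_3=a$, and $c_j=1/b_j$, $c_4=0$. Since $d=2$, the equilibrium measure $\mu$ attached to~\eqref{eq:diff} is a \emph{probability} measure on $[c_4,c_3]\cup[c_2,c_1]$, so the frequency map $F=(f_1,f_2)$ has $f_2\equiv1$, and $F$ is injective exactly when the scalar $f_1=\mu([c_4,c_3])$ is. Next I would identify $f_1$ with $\rho$: by the classical correspondence between billiards in confocal conics, Poncelet maps, and elliptic curves (as in \cite{DragRadn2011book}), both $f_1$ and $\rho$ are expressions for the rotation number of one and the same translation on the elliptic curve of the pencil -- $\rho$ through the explicit integrals of~\eqref{eq:rotation}, $f_1$ through the equilibrium measure -- so $\rho$ is a fixed M\"obius function of $f_1$ (of the shape $\rho=f_1/\bigl(2(1-f_1)\bigr)$). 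Consequently, on any interval of $\lambda$'s, $\rho$ is injective iff $f_1$ is iff $F$ is, and $\rho$ is strictly monotone there iff $f_1$ is, a M\"obius map being strictly monotone.

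The proof then runs as follows. For $\lambda\in(0,b)$ the caustic $\Q_\lambda$ is a confocal ellipse, for $\lambda\in(b,a)$ a confocal hyperbola, and each of these open intervals is a connected component of the set of parameters of non-degenerate caustics of the billiard within $C_0$; so Theorem~\ref{th:1-1a} makes $F$, hence $\rho$, injective on each, and a continuous injection of an interval into the real line is strictly monotone. This already gives strict monotonicity of $\rho$ on $(b,a)$ and on $(0,b)$. To reach the full interval $(-\infty,b)$ of the corollary I would treat $\lambda<0$ separately: there $C_\lambda$ is an ellipse strictly containing $C_0$, the point $b_1=\lambda$ is negative, and after reordering the four numbers $\{1/\lambda,1/b,1/a,0\}$ the configuration becomes $c_4=1/\lambda<c_3=0<c_2=1/a<c_1=1/b$ with only $c_4$ varying. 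The potential-theoretic step behind Theorem~\ref{th:1-1a} (local injectivity of the equilibrium-measure map via a diagonally dominant Jacobian, in the spirit of the Bogatyrev-Peherstorfer-Totik theorem cited there) uses only the ordering of the $c_j$'s, not their signs, so it applies verbatim to this configuration and shows that the relevant equilibrium-measure component, hence $\rho$, is strictly monotone on $(-\infty,0)$. Finally, $\rho$, given by~\eqref{eq:rotation} with the square roots read as positive reals, extends continuously across $\lambda=0$ with both one-sided limits there equal to $0$, while $\rho<0$ on $(-\infty,0)$ and $\rho>0$ on $(0,b)$; since a monotone function that is negative on $(-\infty,0)$ and vanishes at $0$ must be increasing there, and likewise on $(0,b)$, the a~priori independent monotonicity directions on the two sub-intervals agree, and the two pieces glue into a strictly monotone function on all of $(-\infty,b)$.

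I expect the passage to the full interval $(-\infty,b)$ to be the main obstacle: Theorem~\ref{th:1-1a} is stated only for the caustic configurations of a genuine billiard within $\E$, i.e.\ the ones allowed by Lemma~\ref{lemma:audin}, which cover $\lambda\in(0,b)\cup(b,a)$ but not $\lambda<0$, so for negative $\lambda$ one has to re-invoke the potential theory for the re-ordered interval data and then splice the pieces at $\lambda=0$ using the continuity and sign structure of~\eqref{eq:rotation}. The remaining ingredient, the explicit identification of $f_1$ with $\rho$ through the elliptic-curve dictionary, is classical but must be written out with enough care to pin down the M\"obius normalization; everything else is routine bookkeeping.
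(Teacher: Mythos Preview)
Your approach is the same as the paper's: deduce the corollary from Theorem~\ref{th:1-1a} by recognising $\rho$ as the $d=2$ frequency map. The paper's entire proof is one sentence---``relation~(\ref{eq:rotation}) is equivalent to the $d=2$ case of~(\ref{eq:frequency})''---where~(\ref{eq:frequency}) is established in the preceding remark via the Riemann bilinear relations between differentials of the first and third kind; this identifies $\rho$ directly with the ratio $f_1/f_2$, so injectivity of $F$ gives the result without any further intermediary. Your route through a M\"obius relation $\rho=f_1/(2(1-f_1))$ is the same identification dressed differently, but the justification you offer (``the classical correspondence'') is vaguer than what the paper actually uses; if you want to pin down that step, the bilinear relations are the tool, and they make the M\"obius form unnecessary.

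Where you go beyond the paper is the interval $(-\infty,0]$. The paper does not address it at all: Theorem~\ref{th:1-1a} is stated only for genuine caustic parameters, which in dimension two means $\lambda\in(0,b)\cup(b,a)$, and the one-line proof offers nothing further. Your observation that the potential-theoretic step depends only on the ordering of the $c_j$'s and hence carries over to the reordered configuration with $c_4=1/\lambda<0$ is correct in spirit, and your splicing at $\lambda=0$ is a reasonable way to close the gap---but note that for $\lambda<0$ the integrand in the numerator of~(\ref{eq:rotation}) has negative argument under the square root, so ``read as positive reals'' needs a convention (absolute values, or the analytic continuation along the real cycle) that you should make explicit. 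This is a genuine loose end in the paper's statement that you have identified and partially repaired.
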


The relation (\ref{eq:rotation}) is equivalent to the $d=2$ case of (\ref{eq:frequency}).

\begin{remark}
Let us relate more closely the formula (\ref{eq:rotation}) with the geometric meaning of the rotation number.
Consider a billiard trajectory within $C_0$ with the caustic $C_{\alpha}$. 
We will assume that $C_{\alpha}$ is an ellipse, i.e.~$a>b>\alpha>0$.
Following \cite{DragRadn2014jmd}, consider the map:
$$
\sigma(A)=\left(
\int_{M_0}^A\frac{d\lambda_2}{\sqrt{(a-\lambda_2)(b-\lambda_2)(\alpha-\lambda_2)}},
\int_{M_0}^A\frac{d\lambda_1}{\sqrt{(a-\lambda_1)(b-\lambda_1)(\alpha-\lambda_1)}}
\right),
$$
where $M_0$ is an arbitrary given point on $C_{\alpha}$, and $\lambda_1$, $\lambda_2$ are Jacobi coordinates associated with the confocal pencil (\ref{eq:confocal2}).
According to \cite{DragRadn2014jmd}, $\sigma$ maps the region between $C_0$ and $C_{\alpha}$ bijectively to the cylinder $(\mathbf{R}/p\mathbf{Z})\times[-v,0]$, with
$$
p=4\int_b^a\frac{d\lambda}{\sqrt{(a-\lambda)(b-\lambda)(\alpha-\lambda)}},
\quad
v=\int_0^{\alpha}\frac{d\lambda}{\sqrt{(a-\lambda)(b-\lambda)(\alpha-\lambda)}}.
$$ 
The cylinder can be seen also as a rectangle with the horizontal sides equal to $p$, while the vertical ones are equal to $v$ and identified to each other.
The transformation $\sigma$ maps the billiard trajectory onto a zig-zag line, such that its segments form angles $\pm\pi/4$ with the sides of the rectangle, see Figure \ref{fig:zigzag}.
\begin{figure}[h]
	\begin{center}
	\begin{tikzpicture}

\draw [black, fill=gray!30] (0,0) circle [x radius=sqrt(7), y radius=2];
	
\draw[black,fill=white] (0,0) circle [x radius=2, y radius=1];

\draw[thick]
(-1.73205, 0.5) -- (-2.62148, -0.27027) -- (-1.1952, -0.801794) -- (1.31279, -1.73643) --
(1.95711, -0.205987);

\draw[black,fill=black] (-1.73205, 0.5) circle(0.05) node[below right]{$T_0$};
\draw[black,fill=black] (-2.62148, -0.27027) circle(0.05) node[left]{$A_1$};
\draw[black,fill=black] (-1.1952, -0.801794) circle(0.05) node[above right]{$T_1$};
\draw[black,fill=black] (1.31279, -1.73643) circle(0.05) node[below right]{$A_2$};
\draw[black,fill=black] (1.95711, -0.205987) circle(0.05) node[left]{$T_2$};

\begin{scope}[shift={(0,-4)}]

\draw[gray!30,fill=gray!30](-5,1)--(5,1)--(5,-0.5)--(-5,-0.5);

\draw(-5,1)--(5,1);
\draw(-5,-0.5)--(5,-0.5);
\draw[dashed](-5,1)--(-5,-0.5);
\draw[dashed](5,1)--(5,-0.5);

\draw[thick] (-4,-0.5)--(-2.5,1)--(-1,-0.5)--(0.5,1)--(2,-0.5);

\draw[black,fill=black] (-4, -0.5) circle(0.05) node[below]{$\sigma(T_0)$};
\draw[black,fill=black] (-1, -0.5) circle(0.05) node[below]{$\sigma(T_1)$};
\draw[black,fill=black] (2, -0.5) circle(0.05) node[below]{$\sigma(T_2)$};

\draw[black,fill=black] (-2.5, 1) circle(0.05) node[above]{$\sigma(A_1)$};
\draw[black,fill=black] (.5, 1) circle(0.05) node[above]{$\sigma(A_2)$};

\end{scope}

	\end{tikzpicture}
	
\end{center}
\caption{A billiard trajectory and its image by $\sigma$.}\label{fig:zigzag}
\end{figure}
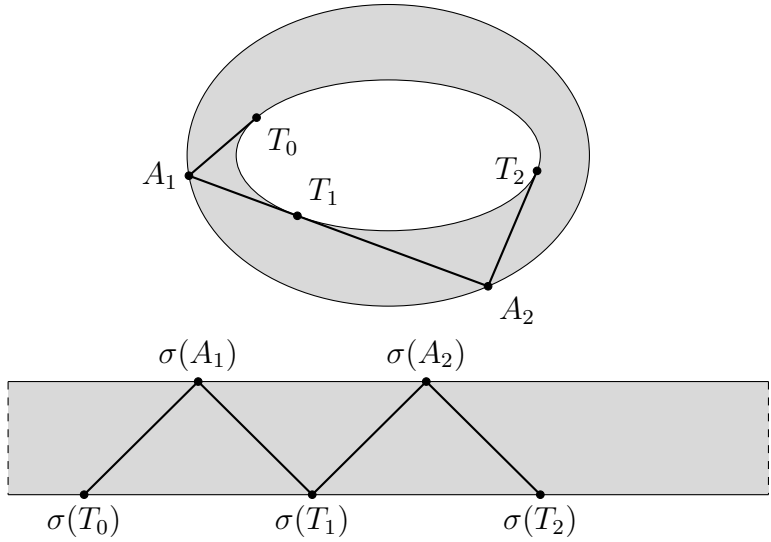

The pullback of the Lebesgue measure $dL$ on the horizontal side of the rectangle $\mu_0=\sigma^{*}(dL)$ is a measure on the caustic $C_{\alpha}$, which is invariant with respect to the billiard dynamics: $\mu_0(T_0T_1)=\mu_0(T_1T_2)$.
The rotation number is:
$$
\rho(\alpha,a,b)=\frac{\mu_0(T_1T_2)}{\mu_0(C_{\alpha})}=\frac{|\sigma(T_1)\sigma(T_2)|}{p}=\frac{2v}p,
$$ 
which is equivalent to (\ref{eq:rotation}).

\end{remark}

\begin{remark}
A proof of Corollary \ref{cor:rotation2} is contained in the beautiful book of Duistermaat \cite{DuistermaatBOOK}.
However, that proof is highly nontrivial and uses a heavy machinery of the theory of algebraic surfaces.
Let us mention that  statements similar to \cite{PS1999}*{Theorem 2.12} existed before, see for example \cite{Meiman1977}.
\end{remark}

\section{Trajectories with low periods in dimension three}
\label{sec:examples}
This section is meant to illustrate the power and effectiveness of the methods and tools developed above.
We provide a comprehensive description of periodic trajectories with periods $4$, $5$, and $6$ in the three-dimensional space.
By analysing these cases, we observed new, interesting properties of such periodic trajectories. For the two-dimensional case see \cite{DragRadn2019rcd}.

\subsection{4-periodic trajectories in dimension three}\label{sec:4periodic}

The Cayley type conditions for such trajectories can be written directly applying Theorem \ref{th:d+1}.

\begin{example}\label{ex:n=4}
There is a $4$-periodic trajectory of the billiard within ellipsoid $\E$, with non-degenerate caustics $\Q_{\alpha_1}$ and $\Q_{\alpha_2}$ if and only if the following conditions are satisfied:
\begin{itemize}
 \item the caustics are $1$-sheeted hyperboloids, i.e.~$\alpha_1,\alpha_2\in(a_1,a_2)$;
 \item $C_3=0$; and
 \item $C_0+C_1\alpha_2+C_2\alpha_2^2=0$,
\end{itemize}
with $C_0$, $C_1$, $C_2$, $C_3$ being the coefficients in the Taylor expansion about $x=0$:
$$
\frac{\sqrt{(a_1-x)(a_2-x)(a_3-x)(\alpha_1-x)(\alpha_2-x)}}{\alpha_1-x}=C_0+C_1x+C_2x^2+C_3x^3+\dots.
$$

Moreover, according to Theorem \ref{th:winding} the winding numbers of such trajectories satisfy $m_0>m_1>m_2$, with $m_0=4$ and $m_2$ being even.
Thus, $(m_0,m_1,m_2)=(4,3,2)$.
\end{example}

It is interesting to consider the case when the two caustics coincide: $\alpha_1=\alpha_2$.
In that case, the segments of the billiard trajectory are placed along generatrices of the hyperboloid $\Q_{\alpha_1}$, see Figure \ref{fig:4periodic}.

\begin{figure}[h]
\centering
\includegraphics[width=7cm,height=5cm]{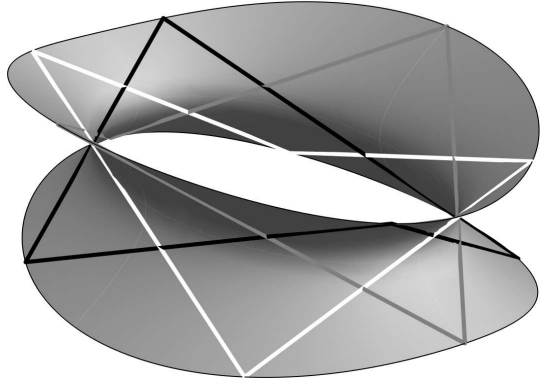}
\caption{Four-periodic trajectories on a hyperboloid.}\label{fig:4periodic}
\end{figure}

\begin{corollary}\label{cor:4hyp}
There exists a $4$-periodic trajectory of the billiard within $\E$, with the segments being parts of generatrices of the confocal $1$-sheeted hyperboloid $\Q_{\alpha_1}$
if and only if
$$
a_1=\frac{a_2a_3}{a_2+a_3}
\quad\text{and}\quad
\alpha_1=a_2+a_3-\sqrt{a_2^2+a_3^2}.
$$
\end{corollary}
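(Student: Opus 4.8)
\textbf{Proof proposal for Corollary \ref{cor:4hyp}.}

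The plan is to specialize Example \ref{ex:n=4} to the degenerate case $\alpha_1=\alpha_2=:\alpha$ and turn the three scalar conditions there into the two explicit equations in the statement. First I would note that when $\alpha_1=\alpha_2$ the relevant generating function simplifies dramatically: the factor $(\alpha_1-x)$ in the denominator cancels one of the two copies of $(\alpha-x)$ in the radicand, leaving
$$
\frac{\sqrt{(a_1-x)(a_2-x)(a_3-x)(\alpha-x)^2}}{\alpha-x}
=
\sqrt{(a_1-x)(a_2-x)(a_3-x)},
$$
valid for $x$ near $0$ with the principal branch, since $0<a_1<a_2<a_3$ and $\alpha\in(a_1,a_2)$ keep all factors positive. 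So the coefficients $C_0,C_1,C_2,C_3$ are simply the Taylor coefficients at $x=0$ of $g(x):=\sqrt{(a_1-x)(a_2-x)(a_3-x)}$, which are completely explicit. Writing $(a_1-x)(a_2-x)(a_3-x)=e_0-e_1x+e_2x^2-x^3$ with $e_0=a_1a_2a_3$, $e_1=a_1a_2+a_1a_3+a_2a_3$, $e_2=a_1+a_2+a_3$, one computes $C_0=\sqrt{e_0}$, and then $C_1$, $C_2$, $C_3$ from the binomial expansion of $(1+u)^{1/2}$ with $u=(-e_1x+e_2x^2-x^3)/e_0$.

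Next I would impose the two conditions from Example \ref{ex:n=4}, now both evaluated at $\alpha_1=\alpha_2=\alpha$: namely $C_3=0$ and $C_0+C_1\alpha+C_2\alpha^2=0$. The condition $C_3=0$ involves only $a_1,a_2,a_3$ and should collapse, after clearing the common factor of $\sqrt{e_0}$ and powers of $e_0$, to a polynomial relation among the $a_i$; I expect it to be exactly $a_1(a_2+a_3)=a_2a_3$, i.e. $a_1=\frac{a_2a_3}{a_2+a_3}$, which is the first claimed equation. (A clean way to see this without heavy computation: $C_3=0$ says $g(x)$ agrees with a quadratic polynomial to order $x^3$, equivalently $g$'s third Taylor coefficient vanishes; since $g^2$ is the explicit cubic above, differentiating $2g g'=(g^2)'$ repeatedly at $0$ and solving the triangular system for the derivatives of $g$ gives $C_3$ as a rational expression in $e_0,e_1,e_2$ whose numerator factors through $e_0e_2-e_1$ — up to scaling — and $e_0e_2=e_1$ is precisely $a_1a_2a_3(a_1+a_2+a_3) $ versus... ) In any case the first equation is a direct consequence of $C_3=0$; then substituting it back, the second condition $C_0+C_1\alpha+C_2\alpha^2=0$ becomes a quadratic equation in $\alpha$ whose coefficients depend only on $a_2,a_3$ (after eliminating $a_1$). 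Solving that quadratic, and selecting the root lying in the admissible interval $(a_1,a_2)$ as required by Example \ref{ex:n=4}, should yield $\alpha=a_2+a_3-\sqrt{a_2^2+a_3^2}$; the other root $a_2+a_3+\sqrt{a_2^2+a_3^2}$ exceeds $a_3$ and is discarded.

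Finally I would verify the geometric assertion that the trajectory then lies along generatrices of $\Q_\alpha$: when both caustics of a billiard segment coincide with a one-sheeted hyperboloid, the segment is simultaneously tangent to it along a direction in which it does not actually leave the surface, i.e. it is a ruling line — this is the standard characterization of lines on a confocal one-sheeted quadric in terms of the confocal caustic parameters, and I would cite it (or note it follows from the Chasles picture recalled in Section \ref{sec:hyper}, degenerating the two tangency conditions into a single contact of order two, forcing the line into the surface). The main obstacle I anticipate is purely computational bookkeeping: extracting $C_3$ and the quadratic in $\alpha$ in closed form and checking the root selection stays inside $(a_1,a_2)$ under the constraint $a_1=a_2a_3/(a_2+a_3)$; I would double-check the admissibility by noting $\alpha=a_2+a_3-\sqrt{a_2^2+a_3^2}$ satisfies $\alpha<a_2$ (equivalent to $a_3<\sqrt{a_2^2+a_3^2}$, always true) and $\alpha>a_1=a_2a_3/(a_2+a_3)$ (a short inequality in $a_2,a_3$), so the constructed caustic is indeed a one-sheeted hyperboloid confocal with $\E$.
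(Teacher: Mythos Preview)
Your approach is essentially the paper's: specialize Example~\ref{ex:n=4} to $\alpha_1=\alpha_2$, observe that the generating function collapses to $\sqrt{(a_1-x)(a_2-x)(a_3-x)}$, impose $C_3=0$ to pin down $a_1$, and then solve the quadratic $C_0+C_1\alpha+C_2\alpha^2=0$ and select the root in $(a_1,a_2)$. One small correction to your parenthetical heuristic: the numerator of $C_3$ is not a single expression like $e_0e_2-e_1$ but the product $(a_1a_2-a_1a_3-a_2a_3)(a_1a_2+a_1a_3-a_2a_3)(a_1a_2-a_1a_3+a_2a_3)$, and it is the standing assumption $0<a_1<a_2<a_3$ that forces the middle factor to vanish and hence $a_1=\dfrac{a_2a_3}{a_2+a_3}$---this is exactly how the paper proceeds.
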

\begin{proof}
We will apply Example \ref{ex:n=4} to the case $\alpha_2=\alpha_1$.
Since
$$
C_3=-\frac{ (a_1 a_2-a_1 a_3-a_2 a_3) (a_1 a_2+a_1 a_3-a_2 a_3) (a_1 a_2-a_1 a_3+a_2 a_3)}{16 \left(a_1  a_2 a_3\right)^{5/2}}
$$
and $0<a_1<a_2<a_3$, the coefficient $C_3$ can be equal to zero only if $a_2a_3=a_1a_3+a_1a_2$.
Now, substituting $\alpha_2=\alpha_1$ and $a_1=a_2a_3/(a_2+a_3)$, we get:
$$
C_0+C_1\alpha_2+C_2\alpha_2^2=\frac{\alpha_1 ^2-2(a_2+a_3)\alpha_1+2 a_2 a_3}{2\sqrt{a_2+a_3}}=0.
$$
This is a quadratic equation in $\alpha_1$.
One of its solutions, $a_2+a_3+\sqrt{a_2^2+a_3^2}$, is bigger than $a_3$ so it cannot correspond to a caustic, while the other one,
$a_2+a_3-\sqrt{a_2^2+a_3^2}$, is between $a_1$ and $a_2$.
\end{proof}

\begin{proposition}
Each confocal family of quadrics contains a unique pair of ellipsoid and $1$-sheeted hyperboloid such that there is a $4$-periodic billiard trajectory within the ellipsoid with the segments placed on the hyperboloid.
\end{proposition}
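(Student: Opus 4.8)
\emph{Plan of proof.} The idea is to reduce everything to Corollary~\ref{cor:4hyp} by exploiting the translation invariance of a confocal pencil. Recall that the family $\Q_\lambda$ is unchanged under a simultaneous shift of $a_1,a_2,a_3$ and $\lambda$ by the same constant; hence for any ellipsoid $\Q_{\lambda_0}$ of the family (so $\lambda_0<a_1$), the billiard inside $\Q_{\lambda_0}$ together with its confocal caustics is exactly the situation of Corollary~\ref{cor:4hyp} with the parameters $a_i$ replaced by $a_i-\lambda_0$, $i=1,2,3$, which still satisfy $0<a_1-\lambda_0<a_2-\lambda_0<a_3-\lambda_0$.

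First I would write down, via this substitution, the two conditions of Corollary~\ref{cor:4hyp} under which the pair $(\Q_{\lambda_0},\Q_{\mu_0})$ carries a $4$-periodic trajectory within $\Q_{\lambda_0}$ with segments along generatrices of the $1$-sheeted hyperboloid $\Q_{\mu_0}$, namely
$$
a_1-\lambda_0=\frac{(a_2-\lambda_0)(a_3-\lambda_0)}{(a_2-\lambda_0)+(a_3-\lambda_0)},
\qquad
\mu_0-\lambda_0=(a_2-\lambda_0)+(a_3-\lambda_0)-\sqrt{(a_2-\lambda_0)^2+(a_3-\lambda_0)^2}.
$$
The first relation involves $\lambda_0$ only; clearing the denominator and collecting terms turns it into the quadratic $\lambda_0^2-2a_1\lambda_0+(a_1a_2+a_1a_3-a_2a_3)=0$, whose roots are $\lambda_0=a_1\pm\sqrt{(a_2-a_1)(a_3-a_1)}$. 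Only the root with the minus sign satisfies $\lambda_0<a_1$ (the other one lies in $(a_2,a_3)$, i.e.\ it is the parameter of a two-sheeted hyperboloid, not of an ellipsoid of the pencil), so $\lambda_0=a_1-\sqrt{(a_2-a_1)(a_3-a_1)}$ is the unique admissible choice of ellipsoid. Substituting this value into the second relation determines $\mu_0$ uniquely, and the estimate already carried out in the proof of Corollary~\ref{cor:4hyp} (applied to the shifted parameters) shows that the resulting $\mu_0$ lies in $(a_1,a_2)$, i.e.\ $\Q_{\mu_0}$ is indeed a $1$-sheeted hyperboloid.

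Finally, the ``if and only if'' in Corollary~\ref{cor:4hyp} supplies both directions: for the pair $(\Q_{\lambda_0},\Q_{\mu_0})$ just constructed such a $4$-periodic trajectory genuinely exists, while for every other ellipsoid of the family the first displayed equation fails, so no admissible hyperboloid can accompany it; and once the (unique) ellipsoid is fixed, the hyperboloid is forced. I do not anticipate a real obstacle: the only points needing care are the bookkeeping of the shift $\lambda\mapsto\lambda-\lambda_0$ and the verification that exactly one root of the quadratic corresponds to an ellipsoid of the pencil, everything else being the elementary algebra of Corollary~\ref{cor:4hyp}.
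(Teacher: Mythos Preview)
Your proposal is correct and follows essentially the same approach as the paper: both arguments shift to the ellipsoid $\Q_{\lambda_0}$, impose the first condition of Corollary~\ref{cor:4hyp} on the translated parameters, solve the resulting quadratic for $\lambda_0$, and then read off the unique hyperboloid from the second condition. You are somewhat more explicit than the paper in spelling out the translation invariance, in locating the spurious root $a_1+\sqrt{(a_2-a_1)(a_3-a_1)}\in(a_2,a_3)$, and in checking that $\mu_0\in(a_1,a_2)$, but the substance is identical.
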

\begin{proof}
We search for $\lambda<a_1$ such that the ellipsoid $\Q_{\lambda}$ from the confocal family satisfies the first condition of Corollary \ref{cor:4hyp}:
$$
a_1-\lambda=\frac{(a_2-\lambda)(a_3-\lambda)}{a_2-\lambda+a_3-\lambda},
$$
which has a unique solution in $(-\infty,a_1)$: $\lambda=a_1-\sqrt{(a_3-a_1)(a_2-a_1)}$.
The corresponding hyperboloid is then uniquely defined from Corollary \ref{cor:4hyp}.
\end{proof}

\subsection{5-periodic trajectories in dimension three}\label{sec:5periodic}

\begin{example}
According to Theorem \ref{th:uslov},
there is a $5$-periodic trajectory of the billiard within ellipsoid $\E$, with non-degenerate caustics $\Q_{\alpha_1}$ and $\Q_{\alpha_2}$ if and only if the following conditions are satisfied:
\begin{itemize}
\item since the period is odd, one of the caustics, say $\Q_{\alpha_1}$, is an ellipsoid, i.e.~$\alpha_1\in(0,c)$; and
 \item $C_3=C_4=0$,
\end{itemize}
with $C_3$, $C_4$ being the coefficients in the Taylor expansion about $x=0$:
$$
\frac{\sqrt{(a_1-x)(a_2-x)(a_3-x)(\alpha_1-x)(\alpha_2-x)}}{\alpha_1-x}=C_0+C_1x+C_2x^2+C_3x^3+\dots.
$$

According to Theorem \ref{th:winding}, the winding numbers $(m_0,m_1,m_2)$ satisfy $m_0=5$, $m_0>m_1>m_2$. Since $m_1$, $m_2$ are even, $(m_0,m_1,m_2)=(5,4,2)$.
The graph of the corresponding polynomial $\hat{p}_5(s)$ is shown in Figure \ref{fig:p542}.
\begin{figure}[h]
\centering
\includegraphics[width=11cm,height=4.15cm]{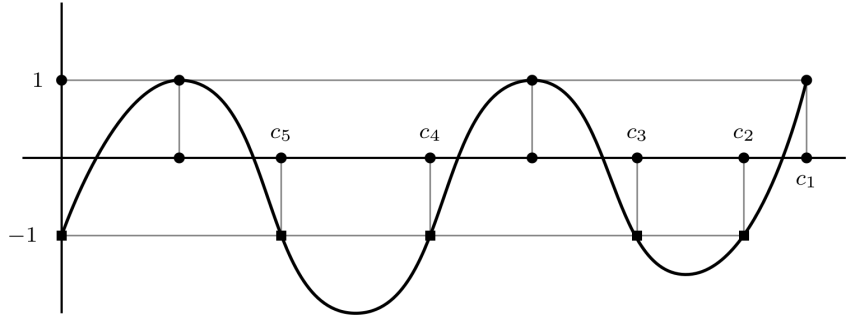}
 \caption{The graph of $\hat{p}_5(s)$. The parameters are $c_1=1/\alpha_1$, $c_2=1/a_3$, $\{c_3,c_4\}=\{1/a_2,1/\alpha_2\}$, $c_{5}=1/a_1$.
 The winding numbers of the trajectory are $(5,4,2)$ and the signature $(0,1,1)$.}\label{fig:p542}
\end{figure}
\end{example}

\subsection{6-periodic trajectories in dimension three}

We saw in Sections \ref{sec:4periodic} and \ref{sec:5periodic} that $4$-periodic and $5$-periodic trajectories the three-dimensional case have uniquely determined winding numbers.
This is not the case with the trajectories of period $6$, which can have winding numbers
$$
(m_0,m_1,m_2)\in\{(6,4,2),(6,5,4),(6,5,2),(6,3,2)\}.
$$

\begin{example}[Winding numbers $(6,4,2)$.]\label{ex:642}
There is a periodic trajectory with winding numbers $(6,4,2)$ of the billiard within ellipsoid $\E$, with non-degenerate caustics $\Q_{\alpha_1}$ and $\Q_{\alpha_2}$ if and only if $6P_0\sim6P_{\infty}$, that is $C_4=C_5=0$, with
$$
\sqrt{(a_1-x)(a_2-x)(a_3-x)(\alpha_1-x)(\alpha_2-x)}=C_0+C_1x+C_2x^2+C_3x^3+\dots.
$$
Moreover, since all winding numbers are even, such trajectories are elliptic $3$-periodic, see Example \ref{ex:d-elliptic}.

The graph of the corresponding polynomial $\hat{p}_6(s)$ is shown in Figure \ref{fig:p642}.
\begin{figure}[h]
\centering
\includegraphics[width=12cm,height=3.96cm]{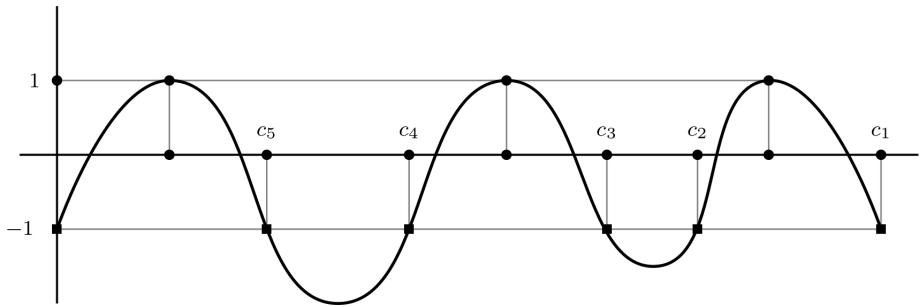}
\caption{The graph of $\hat{p}_6(s)$ corresponding to the winding numbers $(6,4,2)$ and signature $(1,1,1)$. The parameters are $\{c_1,c_2\}=\{1/\alpha_1,1/a_1\}$, $\{c_3,c_4\}=\{1/\alpha_2,1/a_2\}$, $c_5=1/a_3$.
 }\label{fig:p642}
\end{figure}
\end{example}

For trajectories with winding numbers $(6,4,2)$, there are no constraints for types of the caustics.
It is interesting to consider separately the case when the caustics coincide with each other, that is when the trajectories lie on an $1$-sheeted hyperboloid.
An example of such trajectories is shown in Figure \ref{fig:traj642}, with the parameters obtained by application of Example \ref{ex:642}.
Notice that the trajectories are symmetric with respect to the origin.

\begin{figure}[h]
\centering
\includegraphics[width=8cm,height=4.37cm]{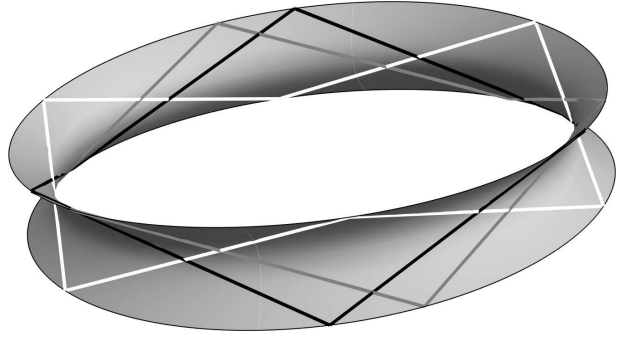}
 \caption{Six-periodic trajectories on a hyperboloid. $m_2=2$ since the trajectories close after winding once around the vertical coordinate axis.
 The parameters are
 $a_1=180-80\sqrt{5}\approx1.11$, $a_2=4$, $a_3=5$, $\alpha_1=\alpha_2=\frac{20}{61}(9-2\sqrt{5})\approx1.48$.}\label{fig:traj642}
\end{figure}

\begin{proposition}\label{prop:cayley642}
There is a periodic trajectory with elliptic period $3$, with the segments being parts of generatrices of a confocal $1$-sheeted hyperboloid
if and only if
$$
\left|\begin{matrix} B_3 & B_4 \\ B_4 & B_5 \end{matrix}\right|=0,
$$
with
$$
\sqrt{(a_1-x)(a_2-x)(a_3-x)}=B_0+B_1x+B_2x^2+B_3x^3+\dots.
$$
\end{proposition}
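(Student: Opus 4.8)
The plan is to reduce the statement to the degenerate-caustic case of the $(d+1)$-periodic criterion already established, with $d=3$ and the coincidence $\alpha_1=\alpha_2$ forcing a double factor. The starting point is Example~\ref{ex:642}: a trajectory with elliptic period $3$ (equivalently, all winding numbers even, i.e.\ $6$-periodic with winding numbers $(6,4,2)$) exists with caustics $\Q_{\alpha_1}$, $\Q_{\alpha_2}$ if and only if $C_4=C_5=0$ in the expansion of $\sqrt{(a_1-x)(a_2-x)(a_3-x)(\alpha_1-x)(\alpha_2-x)}$. When the two caustics coincide, $\alpha_1=\alpha_2=:\alpha$, this square root becomes $(\alpha-x)\sqrt{(a_1-x)(a_2-x)(a_3-x)}$, so writing $\sqrt{(a_1-x)(a_2-x)(a_3-x)}=B_0+B_1x+B_2x^2+\cdots$, the coefficients $C_k$ of the product are $C_k=\alpha B_k-B_{k-1}$ (with $B_{-1}=0$). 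The conditions $C_4=C_5=0$ then read
\begin{equation*}
\alpha B_4 = B_3,\qquad \alpha B_5 = B_4.
\end{equation*}

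The next step is to extract from this pair of equations a condition on the $B_j$ alone that does not mention $\alpha$, together with a verification that the $\alpha$ so determined genuinely corresponds to a confocal $1$-sheeted hyperboloid. Eliminating $\alpha$ from the two displayed relations gives $B_3 B_5 = B_4^2$, i.e.\ precisely the vanishing of the stated $2\times2$ determinant $\left|\begin{smallmatrix} B_3 & B_4 \\ B_4 & B_5\end{smallmatrix}\right|$. Conversely, if that determinant vanishes and (say) $B_4\neq0$, set $\alpha:=B_3/B_4=B_4/B_5$; then both equations $C_4=0$ and $C_5=0$ hold, so by Example~\ref{ex:642} there is a trajectory with elliptic period $3$ having $\Q_\alpha$ as a (doubled) caustic, hence segments lying on generatrices of that $1$-sheeted hyperboloid. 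I would also note, as in Corollary~\ref{cor:4hyp} and the surrounding discussion, that the degenerate locus where segments lie on a quadric corresponds exactly to $\alpha_1=\alpha_2$, so no generality is lost.

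The one genuinely delicate point — and the step I expect to be the main obstacle — is controlling the degenerate subcases and the interval constraints. First, one must check that the value $\alpha=B_3/B_4$ actually lies in $(a_1,a_2)$ so that $\Q_\alpha$ is a confocal $1$-sheeted hyperboloid rather than an ellipsoid or a $2$-sheeted hyperboloid; this is where the inequalities $0<a_1<a_2<a_3$ and the sign pattern of the Taylor coefficients $B_j$ of $\sqrt{(a_1-x)(a_2-x)(a_3-x)}$ must be invoked, in the same spirit as the sign analysis of $C_3$ in the proof of Corollary~\ref{cor:4hyp}. Second, one must handle the edge case $B_4=0$: then the determinant condition forces $B_3 B_5=0$, and one has to argue (again from the explicit form of the $B_j$, or from the fact that $\sqrt{(a_1-x)(a_2-x)(a_3-x)}$ has no polynomial truncation of low degree because $a_1,a_2,a_3$ are distinct and positive) that this cannot produce an admissible confocal hyperboloid, so that case is vacuous.

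Assembling these pieces: the forward direction is the substitution $C_k=\alpha B_k-B_{k-1}$ plus elimination of $\alpha$; the reverse direction is solving for $\alpha$ from the determinant relation and feeding it back into Example~\ref{ex:642}; and the technical glue is the verification that the resulting $\alpha$ sits in the correct interval, carried out by the same kind of elementary but slightly fiddly estimates on $B_3,B_4,B_5$ used elsewhere in Section~\ref{sec:examples}. The whole argument is short precisely because the heavy lifting — translating periodicity into the vanishing of Taylor coefficients — has already been done in Theorem~\ref{th:uslov} and Example~\ref{ex:642}.
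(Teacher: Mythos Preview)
Your approach is essentially identical to the paper's: both start from Example~\ref{ex:642}, specialize to $\alpha_1=\alpha_2=\alpha$, observe that $C_k=\alpha B_k-B_{k-1}$, and eliminate $\alpha$ from $C_4=C_5=0$ to obtain $B_3B_5=B_4^2$. The paper's proof is a single sentence recording exactly this substitution; your additional care about the edge case $B_4=0$ and the interval constraint $\alpha\in(a_1,a_2)$ goes beyond what the paper actually verifies.
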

\begin{proof}
Follows from the condition stated in Example \ref{ex:642}, with $\alpha_1=\alpha_2$, and noticing that
$C_4=\alpha_1B_4-B_3$ and $C_5=\alpha_1B_5-B_4$.
\end{proof}

As a consequence of Proposition \ref{prop:cayley642} and the Cayley's condition, we get

\begin{corollary}
The following three statements are equivalent:
\begin{itemize}
 \item There is a periodic trajectory with elliptic period $3$ of the billiard within ellipsoid $\frac{x_1^2}{a_1}+\frac{x_2^2}{a_2}+\frac{x_3^2}{a_3}=1$, with the segments being parts of generatrices of a confocal $1$-sheeted hyperboloid.
 \item There is a $6$-periodic trajectory within ellipse:
 $\frac{x_2^2}{a_2}+\frac{x_3^2}{a_3}=1$,
 with ellipse $\frac{x_2^2}{a_2-a_1}+\frac{x_3^2}{a_3-a_1}=1$ as caustic.
 \item
 There is a $6$-periodic trajectory within ellipse:
 $\frac{x_1^2}{a_1}+\frac{x_3^2}{a_3}=1$,
 with hyperbola $\frac{x_1^2}{a_1-a_2}+\frac{x_3^2}{a_3-a_2}=1$ as caustic.
\end{itemize}
\end{corollary}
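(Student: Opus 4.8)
The plan is to show that all three statements are equivalent to one and the same polynomial identity, namely the vanishing of $\left|\begin{matrix} B_3 & B_4 \\ B_4 & B_5 \end{matrix}\right|$, where $\sqrt{(a_1-x)(a_2-x)(a_3-x)}=B_0+B_1x+B_2x^2+\dots$. For the first statement this is exactly Proposition \ref{prop:cayley642}, so nothing needs to be added there; the work is to run the algebro-geometric criterion of Theorem \ref{th:uslov} for the two planar statements.

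First I would record the bookkeeping that makes the whole thing tick. In the second statement the boundary ellipse $\frac{x_2^2}{a_2}+\frac{x_3^2}{a_3}=1$ has confocal parameters $a_2<a_3$, and its caustic is the member $\lambda=a_1$ of that confocal family, an ellipse since $0<a_1<a_2$. In the third statement the boundary ellipse $\frac{x_1^2}{a_1}+\frac{x_3^2}{a_3}=1$ has confocal parameters $a_1<a_3$, and its caustic is the member $\lambda=a_2$, a hyperbola since $a_1<a_2<a_3$. In both cases the associated cubic $\Pol$ is the \emph{same symmetric polynomial} $(a_1-x)(a_2-x)(a_3-x)$, and its smallest root — the quantity $b_1$ of Section \ref{sec:hyper} — is $a_1$ in both cases. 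A planar billiard has only one caustic, so the set $\Set$ of Theorem \ref{th:uslov} is empty; that theorem then asserts that in each case a $6$-periodic trajectory with the prescribed caustic exists if and only if $6(P_0-P_{a_1})\sim0$ on $\Curve\colon y^2=(a_1-x)(a_2-x)(a_3-x)$, with $P_0=(0,\sqrt{a_1a_2a_3})$. Since the curve $\Curve$, the point $P_0$ and the Weierstrass point $P_{a_1}$ are literally the same for the two planar problems, statements two and three are already seen to be equivalent, before any computation.

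Next I would evaluate the divisor relation $6(P_0-P_{a_1})\sim0$. Using $2P_{a_1}\sim2P_\infty$ for Weierstrass points (as in the proof of Theorem \ref{th:uslov}), it becomes $6P_0\sim6P_\infty$, which I would treat exactly as in the proof of Example \ref{ex:642}; the only difference is that $\Curve$ has genus $1$, so $\LL(6P_\infty)$ is six-dimensional with basis $1,x,x^2,x^3,\sqrt{\Pol},x\sqrt{\Pol}$. Looking for $\varphi=p_3(x)+q_1(x)\sqrt{\Pol(x)}\in\LL(6P_\infty)$, with $\deg p_3\le3$, $\deg q_1\le1$, having a zero of order $6$ at $P_0$: the coefficients of $p_3$ absorb the vanishing conditions at $x^0,\dots,x^3$, and the two remaining conditions at $x^4,x^5$ form a homogeneous linear system in the two coefficients of $q_1$, with matrix $\begin{pmatrix} B_4 & B_3 \\ B_5 & B_4 \end{pmatrix}$. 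A nontrivial $\varphi$ thus exists if and only if $B_4^2-B_3B_5=0$, i.e. $\left|\begin{matrix} B_3 & B_4 \\ B_4 & B_5 \end{matrix}\right|=0$; equivalently, this is Cayley's condition \eqref{eq:Cayodd} for $n=6$. Combined with Proposition \ref{prop:cayley642}, this completes the chain of equivalences.

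The one point I expect to be genuinely delicate is the bookkeeping in the second paragraph: one must verify that after identifying which of $a_1,a_2,a_3$ plays the role of the caustic in each planar billiard, all three problems produce the \emph{same} symmetric cubic $(a_1-x)(a_2-x)(a_3-x)$ and the \emph{same} Weierstrass point $P_{a_1}$ — this coincidence is precisely why the three conditions collapse into one. Everything else is routine, the genus-$1$ computation being a verbatim rerun of the one already carried out for Example \ref{ex:642}. (It is also worth noting that whichever sense of ``$6$-periodic'' one adopts is immaterial here, since any lower-period side condition is again expressed through the same coefficients $B_i$ on the same curve $\Curve$.)
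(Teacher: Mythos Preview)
Your proposal is correct and follows essentially the same route as the paper: all three statements are reduced to the single condition $\left|\begin{smallmatrix} B_3 & B_4 \\ B_4 & B_5 \end{smallmatrix}\right|=0$, the first via Proposition~\ref{prop:cayley642} and the two planar ones via the classical Cayley condition \eqref{eq:Cayodd} for $n=6$ applied to the common cubic $(a_1-x)(a_2-x)(a_3-x)$. Your bookkeeping that both planar problems share the same $\Pol$ and the same $b_1=a_1$ is exactly the observation underlying the paper's one-line proof; you have simply unpacked the Cayley condition through Theorem~\ref{th:uslov} and a genus-$1$ Riemann--Roch computation rather than citing \eqref{eq:Cayodd} directly.
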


In the remaining three triplets of the winding numbers corresponding to $6$-periodic trajectories, the middle one, $m_1$, is odd.
That means that both caustics in these cases need to be $1$-sheeted hyperboloids.
From Theorem \ref{th:uslov} we get the analytic condition for such trajectories.

\begin{proposition}\label{prop:6odd}
There is a $6$-periodic trajectory with odd frequency number $m_1$ of the billiard within ellipsoid $\E$, with non-degenerate caustics $\Q_{\alpha_1}$ and $\Q_{\alpha_2}$ if and only if both $\Q_{\alpha_1}$ and $\Q_{\alpha_2}$ are $1$-sheeted hyperboloids, that is $\alpha_1,\alpha_2\in(a_1,a_2)$, and any of the following two equivalent conditions satisfied:
\begin{itemize}
 \item[(i)]
$$
\rank\left(
\begin{matrix}
\alpha_1\alpha_2 & 0 & 0 & C_0 & 0 \\
-(\alpha_1+\alpha_2) & \alpha_1\alpha_2 & 0 & C_1 & C_0\\
1 & -(\alpha_1+\alpha_2) & \alpha_1\alpha_2 & C_2 & C_1\\
0 & 1 & -(\alpha_1+\alpha_2)& C_3 & C_2 \\
0 & 0 & 1 & C_4 & C_3\\
0 & 0 & 0 & C_5 & C_4
\end{matrix}
\right)<5,
$$
with
$$
\sqrt{(a_1-x)(a_2-x)(a_3-x)(\alpha_1-x)(\alpha_2-x)}=C_0+C_1x+C_2x^2+C_3x^3+\dots;
$$
 \item[(ii)]
 there are polynomials $p_2(x)$ and $p_1(x)$ of degrees $2$ and $1$ respectively such that:
 \begin{equation}\label{eq:6odd}
 (\alpha_1-x)(\alpha_2-x)p_2^2(x)-(a_1-x)(a_2-x)(a_3-x)p_1^2(x)=x^6.
 \end{equation}
\end{itemize}
\end{proposition}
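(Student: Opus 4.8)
The plan is to follow the same template used in Theorems \ref{th:d+1}--\ref{th:cayley} and Proposition \ref{prop:cayley642}, specializing $d=3$ and $n=6$. First I would invoke Theorem \ref{th:uslov}: the trajectory with caustics $\Q_{\alpha_1}$, $\Q_{\alpha_2}$ is $6$-periodic iff, for some $\Set'\subseteq\Set$,
$$
6(P_0-P_{b_1})+\sum_{\{\beta,\beta'\}\in\Set'}(P_\beta-P_{\beta'})\sim0
$$
on the curve $\Curve$. Since the middle winding number $m_1$ is odd, Lemma \ref{lemma:winding}(ii) forces $b_2,b_3\in\{\alpha_1,\alpha_2\}$, i.e.\ both caustics are $1$-sheeted hyperboloids, $\alpha_1,\alpha_2\in(a_1,a_2)$; and then $\Set=\{\{\alpha_1,\alpha_2\}\}$. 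By Lemma \ref{lemma:audin} applied to this ordering, $\{\alpha_1,\alpha_2\}=\{b_2,b_3\}$ indeed forms a same-type pair, so $\Set'$ can be $\emptyset$ or $\{\{\alpha_1,\alpha_2\}\}$; the first choice would give $6P_0\sim6P_\infty$, which forces all winding numbers even by Example \ref{ex:642}, contradicting $m_1$ odd. Hence $\Set'=\{\{\alpha_1,\alpha_2\}\}$ and the periodicity condition becomes $6P_0+P_{\alpha_1}\sim 6P_\infty+P_{\alpha_2}$ (or with $\alpha_1,\alpha_2$ swapped, which is symmetric).

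Next I would translate this divisor equivalence into the rank condition (i). The relation $6P_0+P_{\alpha_1}\sim 6P_\infty+P_{\alpha_2}$ holds iff there is a meromorphic function in $\LL(6P_\infty+P_{\alpha_2})$ with a zero of order $6$ at $P_0$ and a simple zero at $P_{\alpha_1}$. By Riemann--Roch on $\Curve$ (genus $2$), $\dim\LL(6P_\infty+P_{\alpha_2})=6$, with basis $1,x,x^2,x^3,\dfrac{y}{\alpha_2-x},\dfrac{xy}{\alpha_2-x}$, so one searches for
$$
\varphi=r_3(x)-\frac{p_1(x)\,y}{\alpha_2-x},
$$
with $\deg r_3\le3$, $\deg p_1\le1$. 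Expanding $y=\sqrt{\Pol(x)}=(\alpha_2-x)\cdot\big(C_0+C_1x+\dots\big)\cdot\frac{1}{\alpha_2-x}$ is awkward directly; instead I multiply through by $(\alpha_2-x)$ and write $\psi=(\alpha_2-x)r_3(x)-p_1(x)y$, a function whose order-$6$ vanishing at $x=0$ together with vanishing at $P_{\alpha_1}$ is the target. Comparing Taylor coefficients of $\psi$ at $x=0$ — using $y=C_0+C_1x+\dots$ with the $C_i$ as in the statement — the six conditions $\psi(0)=\dots=\psi^{(5)}(0)=0$ plus the vanishing at $P_{\alpha_1}$ become a homogeneous linear system in the five coefficients of $(r_3,p_1)$; solvability with a nontrivial vector is exactly the statement that the displayed $6\times5$ matrix (whose first three columns encode multiplication by $(\alpha_1-x)(\alpha_2-x)$ acting on $r_3$, and whose last two columns encode the Taylor data of $p_1 y$) has rank $<5$. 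This is the routine-but-careful bookkeeping step; the shape of the matrix in (i) is precisely the Sylvester-type matrix one gets, so I would just verify the column entries match.

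Finally, part (ii) follows from (i) exactly as Lemma \ref{lemma:polinomi} and Proposition \ref{prop:6odd}'s analogues: rationalize by multiplying $\psi$ by its conjugate $(\alpha_2-x)r_3(x)+p_1(x)y$, obtaining
$$
(\alpha_2-x)^2 r_3^2(x)-(\alpha_1-x)(\alpha_2-x)\,p_1^2(x)\prod_{j=1}^3(a_j-x)=(\text{const})\,x^6,
$$
since the left side is a polynomial of degree $6$ vanishing to order $6$ at $x=0$; dividing by $(\alpha_2-x)$ — which divides $\prod(a_j-x)$? no — instead I set $p_2(x):=r_3(x)$ has degree $3$, not $2$, so a small adjustment is needed: one absorbs one factor, noting $r_3$ must be divisible by $(\alpha_2-x)$ for $\varphi$ itself (not $\psi$) to be regular, hence $r_3(x)=(\alpha_2-x)\tilde r_2(x)$ with $\deg\tilde r_2\le2$; relabeling $p_2:=\tilde r_2$ gives exactly \eqref{eq:6odd} after canceling one $(\alpha_2-x)$. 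The equivalence (i)$\iff$(ii) is then the statement that the linear system has a solution iff the corresponding Pell-type identity \eqref{eq:6odd} is solvable, which is the degree-count argument already used twice in the paper.

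The main obstacle I anticipate is not conceptual but the exact matching of the matrix in (i): getting the divisibility bookkeeping right (that $r_3$ carries a hidden factor $(\alpha_2-x)$, forcing $p_2$ to have degree $2$) and confirming that, after this reduction, the five unknowns and six equations produce precisely the displayed $6\times5$ array with those entries $\alpha_1\alpha_2,\ -(\alpha_1+\alpha_2),\ 1$ in the first three columns and the shifted $C_i$'s in the last two. Everything else is a direct application of Theorem \ref{th:uslov}, Lemmas \ref{lemma:audin} and \ref{lemma:winding}, Riemann--Roch, and the rationalization trick of Lemma \ref{lemma:polinomi}.
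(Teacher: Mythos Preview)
Your overall strategy coincides with the paper's: derive the divisor relation $6P_0+P_{\alpha_1}\sim 6P_\infty+P_{\alpha_2}$ from Theorem~\ref{th:uslov} and Lemma~\ref{lemma:winding}, compute $\LL(6P_\infty+P_{\alpha_2})$ by Riemann--Roch, impose a sixth-order zero at $P_0$ plus a simple zero at $P_{\alpha_1}$, and then rationalize to get \eqref{eq:6odd}. The derivation of the hyperboloid constraint and of $\Set'=\{\{\alpha_1,\alpha_2\}\}$ is correct and in fact more explicit than the paper's.

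The gap is exactly where you flagged it, but your proposed fix is wrong. You claim ``$r_3$ must be divisible by $(\alpha_2-x)$ for $\varphi$ itself to be regular.'' That is false: $\varphi=r_3(x)-\dfrac{p_1(x)\,y}{\alpha_2-x}$ is automatically in $\LL(6P_\infty+P_{\alpha_2})$ for any $r_3$ of degree $\le3$ and $p_1$ of degree $\le1$; the term $\dfrac{y}{\alpha_2-x}$ already has only a simple pole at $P_{\alpha_2}$. The correct reduction, and the one the paper uses, comes from the \emph{other} Weierstrass point: since $y$ vanishes at $P_{\alpha_1}$, the requirement $\varphi(P_{\alpha_1})=0$ forces $r_3(\alpha_1)=0$, hence $r_3(x)=(\alpha_1-x)p_2(x)$ with $\deg p_2\le2$. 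After this, $(\alpha_2-x)\varphi=(\alpha_1-x)(\alpha_2-x)p_2(x)+(\text{linear})\cdot y$, and Taylor-expanding to order $5$ in the five unknowns $(p_2,p_1)$ produces precisely the displayed $6\times5$ matrix (the first three columns are the shifts of $\alpha_1\alpha_2-(\alpha_1+\alpha_2)x+x^2$).

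This same mis-factorization breaks your argument for (ii): with $r_3$ of degree $3$, the product $\psi\bar\psi=(\alpha_2-x)^2r_3^2-p_1^2(\alpha_1-x)(\alpha_2-x)\prod_j(a_j-x)$ has degree $8$, not $6$, so it cannot equal $(\text{const})\,x^6$. With the correct factor $(\alpha_1-x)\mid r_3$, you get $(\alpha_1-x)(\alpha_2-x)p_2^2-\prod_j(a_j-x)\,p_1^2$, which is degree $6$ and yields \eqref{eq:6odd} directly.
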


\begin{proof}
 The algebro-geometric condition for such trajectories is $6P_0+P_{\alpha_1}\sim6P_{\infty}+P_{\alpha_2}$.
The basis of $\LL(6P_{\infty}+P_{\alpha_2})$ is:
$$
1,\ x,\ x^2,\ x^3,\ y,\ \frac{y}{x-\alpha_2}.
$$
We are searching for a non-trivial linear combination $\varphi$ that has a zero of order $6$ at $x=0$ and a simple zero $x=\alpha_1$:
$$
\varphi=A_0+A_1x+A_2x^2+A_3x^3+A_4y+A_5\frac{y}{x-\alpha_2}.
$$
Since $y$ has a zero at $x=\alpha_1$, one root of $A_0+A_1x+A_2x^2+A_3x^3$ is $\alpha_1$, so the condition is equivalent to
\begin{equation}\label{eq:2phi}
(x-\alpha_2)\varphi=(\alpha_1-x)(\alpha_2-x)(A_0'+A_1'x+A_2'x^2)+A_3'y+A_4'xy,
\end{equation}
having a zero of order $6$ at $x=0$.
Part (i) then follows from the Taylor expansion of $(x-\alpha_2)\varphi$ around $x=0$:
\begin{gather*}
(\alpha_1\alpha_2 A_0'+C_0 A_3')
+
\left(-(\alpha_1+\alpha_2)A_0'+\alpha_1\alpha_2A_1'+C_1A_3'+C_0A_4'\right)x
\\
+
(A_0'-(\alpha_1+\alpha_2)A_1'+\alpha_1\alpha_2A_1+C_2A_3'+C_1A_4')x^2
\\
+
(A_1'-(\alpha_1+\alpha_2)A_2'+C_3A_3'+C_2A_4')x^3
\\
+
(A_2'+C_4A_3'+C_3A_4')x^4
+
(C_5A_3'+C_4A_3')x^5+\dots
\end{gather*}

To obtain part (ii), denote $p_2(x)=A_0'+A_1'x+A_2'x^2$, $p_1(x)=A_3'+A_4'x$ and multiply the righthandside of \eqref{eq:2phi} by
$p_2(x)-yp_1(x)$, and divide it by $(\alpha_1-x)(\alpha_2-x)$.
We get that the function:
$$
(\alpha_1-x)(\alpha_2-x)p_2^2(x)-(a_1-x)(a_2-x)(a_3-x)p_1^2(x)
$$
has a zero of order $6$ at $x=0$.
Since that function is a polynomial of degree $6$, we get the requested condition.
\end{proof}

\begin{remark}
Equation \eqref{eq:6odd} from Proposition \ref{prop:6odd} is equivalent to:
$$
r_2^2(s)\left(s-\frac1{\alpha_1}\right)\left(s-\frac1{\alpha_2}\right)
-
r_1^2(s)\cdot s\left(s-\frac1{a_1}\right)\left(s-\frac1{a_2}\right)\left(s-\frac1{a_3}\right)=1.
$$
Now we proceed similarly as in Lemma \ref{lemma:polinomi}.
Denote:
$$
\hat{p}_6(s):=2r_1^2(s)\cdot s\left(s-\frac1{a_1}\right)\left(s-\frac1{a_2}\right)\left(s-\frac1{a_3}\right)+1
=
2r_1^2(s)\cdot \rho_4(s)+1
,
$$
and we get:
$$
\hat{p}_6^2(s):=4r_1^2\rho_4(r_1^2\rho_4+1)+1=4r_1^2 r_2^2 \hat{\Pol}_{6}+1=\hat{q}_3^2\hat{\Pol}_{6}+1.
$$
Finally, we get Pell's equation:
$$
\hat{p}_6^2(s)-\hat{q}_3^2(s)\hat{\Pol}_{6}(s)=1.
$$
\end{remark}

\begin{example}[$6$-periodic trajectories with winding numbers $(6,5,4)$.]
The graph of the corresponding polynomial $\hat{p}_6(s)$ is shown in Figure \ref{fig:p654}.
\begin{figure}[h]
\centering
\includegraphics[width=12cm,height=3.96cm]{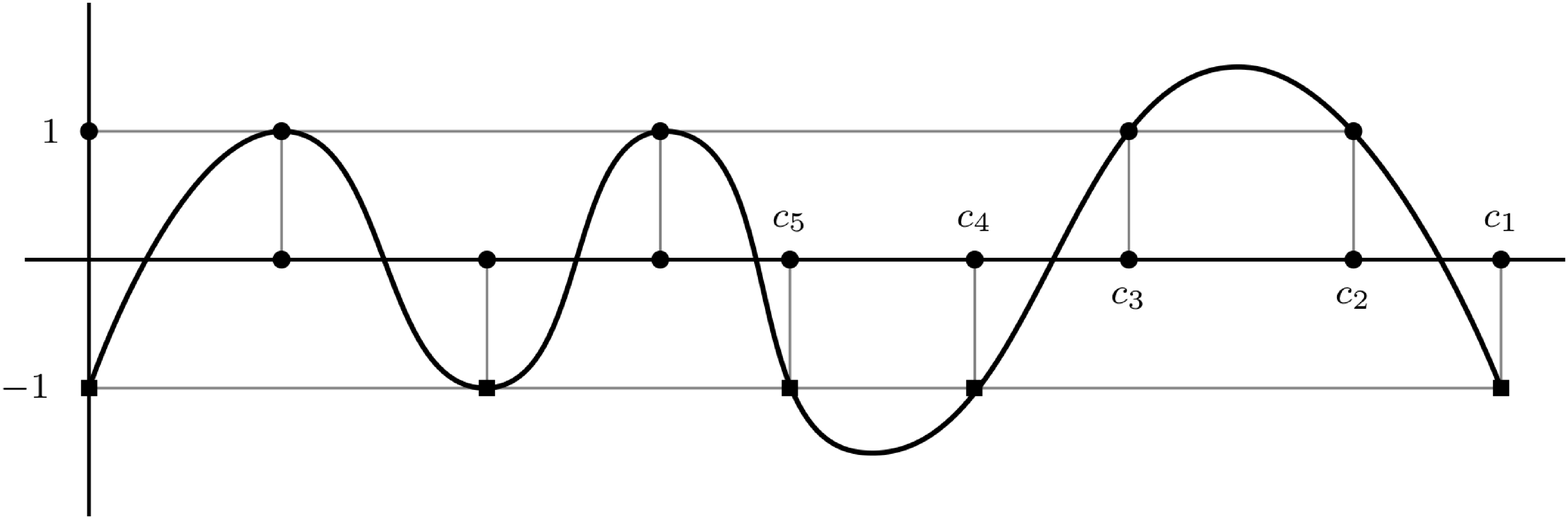}
 \caption{The graph of $\hat{p}_6(s)$ corresponding to the signature $(0,0,3)$ and winding numbers $(6,5,4)$.
 The parameters are $c_1=1/a_1$, $c_2=1/\alpha_1$, $c_3=1/\alpha_2$, $c_4=1/a_2$, $c_{5}=1/a_1$.}\label{fig:p654}
\end{figure}
Several such trajectories, for $\alpha_1=\alpha_2$, are shown in Figure \ref{fig:traj6odd}.
\begin{figure}[h]
\centering
\includegraphics[width=8cm,height=6.84cm]{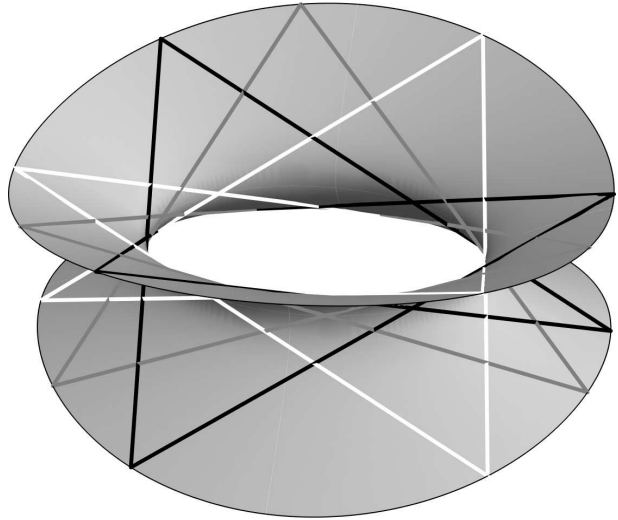}
 \caption{Six-periodic trajectories on a hyperboloid. $m_2=4$ since the trajectories close after winding twice around the vertical coordinate axis.
 The parameters are $a_1\approx3.303$, $a_2=4$, $a_3=5$, $\alpha_1=\alpha_2\approx3.5$.}\label{fig:traj6odd}
\end{figure}

\end{example}

\begin{example}[$6$-periodic trajectories with winding numbers $(6,5,2)$.]

The graph of the corresponding polynomial $\hat{p}_6(s)$ is shown in Figure \ref{fig:p652}.
\begin{figure}[h]
\centering
\includegraphics[width=12cm,height=3.96cm]{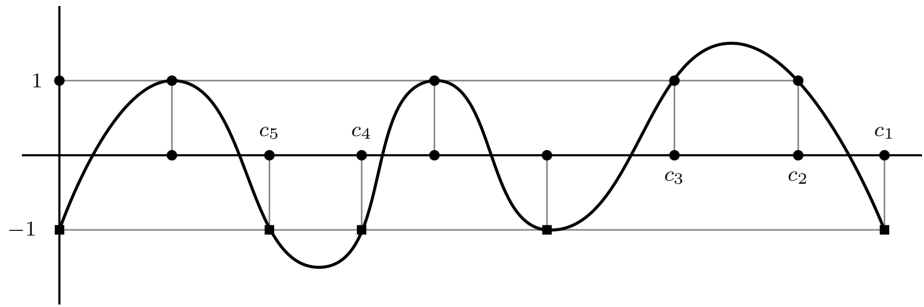}
 \caption{The graph of $\hat{p}_6(s)$ corresponding to the winding numbers $(6,5,2)$ and signature $(0,2,1)$. The parameters are $c_1=1/a_3$, $c_2=1/\alpha_1$, $c_3=1/\alpha_2$, $c_4=1/a_2$, $c_{5}=1/a_1$.}\label{fig:p652}
\end{figure}
\end{example}

\begin{example}[$6$-periodic trajectories with winding numbers $(6,3,2)$.]
The graph of the corresponding polynomial $\hat{p}_6(s)$ is shown in Figure \ref{fig:p632}.
\begin{figure}[h]
\centering
\includegraphics[width=12cm,height=3.96cm]{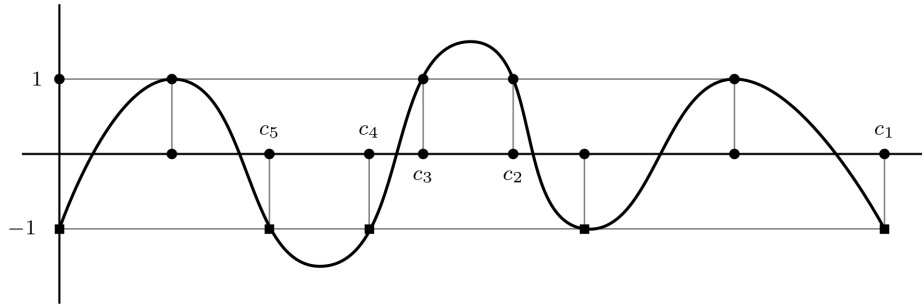}
 \caption{The graph of $\hat{p}_6(s)$ corresponding to winding numbers $(6,3,2)$ and signature is $(2,0,1)$. The parameters are  $c_1=1/a_3$, $c_2=1/\alpha_1$, $c_3=1/\alpha_2$, $c_4=1/a_2$, $c_{5}=1/a_1$.}\label{fig:p632}
\end{figure}
\end{example}

\subsection*{Scknowledgements}
The research  was supported
by the Australian Research Council, Discovery Project 190101838 \emph{Billiards within quadrics and beyond} and by the
 Serbian Ministry of Education, Science, and Technological Development,
Project 174020 \emph{Geometry and Topology of Manifolds, Classical
Mechanics, and Integrable Dynamical Systems}.
M.~R.~is grateful to Holger Dullin for discussions.
Both autors thank the referee for careful reading and useful suggestions.

\begin{bibdiv}
\begin{biblist}


\bib{AbendaFed2004}{article}{
    title={Closed geodesics and billiards on quadrics related to elliptic KdV solutions},
    author={Abenda, Simonetta},
    author={Fedorov, Yuri},
    journal={Letters in Mathematical Physics},
    volume={76},
    date={2006},
    pages={111--134}
}

\bib{AbendaGrin2010}{article}{
   author={Abenda, Simonetta},
   author={Grinevich, Petr G.},
   title={Periodic billiard orbits on $n$-dimensional ellipsoids with
   impacts on confocal quadrics and isoperiodic deformations},
   journal={J. Geom. Phys.},
   volume={60},
   date={2010},
   number={10},
   pages={1617--1633}
}

\bib{AhiezerAPPROX}{book}{
   author={Ahiezer, N. I.},
   title={Lekcii po Teorii Approksimacii},
   language={Russian},
   publisher={OGIZ, Moscow-Leningrad},
   date={1947},
   pages={323}
}

\bib{Ap1986}{article}{
    author={Aptekarev, A. I.},
    title={Asymptotic properties of polynomials orthogonal on a system of contours, and periodic motions of Toda chains},
    journal={Math.USSR. Sb},
    volume={53},
    date={1986},
    pages={233-260}
}

\bib{ArnoldMMM}{book}{
   author={Arnol\cprime d, V. I.},
   title={Mathematical methods of classical mechanics},
   series={Graduate Texts in Mathematics},
   volume={60},
   edition={2},
   note={Translated from the Russian by K. Vogtmann and A. Weinstein},
   publisher={Springer-Verlag, New York},
   date={1989},
   pages={xvi+508}
}

\bib{Audin1994}{article}{
    author={Audin, Mich\`ele},
    title={Courbes alg\'ebriques et syst\`emes int\'egrables:
g\'eodesiques des quadriques},
    journal={Exposition. Math.},
    volume={12},
    date={1994},
    pages={193--226}
}


\bib{BenciG1989}{article}{
   author={Benci, V.},
   author={Giannoni, F.},
   title={Periodic bounce trajectories with a low number of bounce points},
   language={English, with French summary},
   journal={Ann. Inst. H. Poincar\'e Anal. Non Lin\'eaire},
   volume={6},
   date={1989},
   number={1},
   pages={73--93}
}

\bib{Bezdek2009}{article}{
   author={Bezdek, D\'aniel},
   author={Bezdek, K\'aroly},
   title={Shortest billiard trajectories},
   journal={Geom. Dedicata},
   volume={141},
   date={2009},
   pages={197--206}
}

\bib{BDFRR2002}{article}{
   author={Bolotin, S.},
   author={Delshams, A.},
   author={Fedorov, Yu.},
   author={Ram\'\i rez-Ros, R.},
   title={Bi-asymptotic billiard orbits inside perturbed ellipsoids},
   conference={
      title={Progress in nonlinear science, Vol. 1},
      address={Nizhny Novgorod},
      date={2001},
   },
   book={
      publisher={RAS, Inst. Appl. Phys., Nizhniy Novgorod},
   },
   date={2002},
   pages={48--62}
}

\bib{CRR2011}{article}{
   author={Casas, Pablo S.},
   author={Ram\'\i rez-Ros, Rafael},
   title={The frequency map for billiards inside ellipsoids},
   journal={SIAM J. Appl. Dyn. Syst.},
   volume={10},
   date={2011},
   number={1},
   pages={278--324}
}

\bib{CRR2012}{article}{
   author={Casas, Pablo S.},
   author={Ram\'\i rez-Ros, Rafael},
   title={Classification of symmetric periodic trajectories in ellipsoidal
   billiards},
   journal={Chaos},
   volume={22},
   date={2012},
   number={2},
   pages={026110, 24}
}

\bib{Cayley1854}{article}{
    author={Cayley, Arthur},
    title={Developments on the porism of the in-and-circumscribed polygon},
    journal={Philosophical magazine},
    volume={7},
    date={1854},
    pages={339--345}
}

\bib{CCS1993}{article}{
    author={Chang, Shau-Jin},
    author={Crespi, Bruno},
    author={Shi, Kang-Jie},
    title={Elliptical billiard systems and the full Poncelet's theorem in $n$ dimensions},
    journal={J. Math. Phys.},
    volume={34},
    number={6},
    date={1993},
    pages={2242--2256}
}

\bib{DarbouxSUR}{book}{
    author={Darboux, Gaston},
    title={
        Le\c{c}ons sur la th\'eorie
        g\'en\'erale des surfaces et les
        applications g\'eo\-m\'etri\-ques du
        calcul infinitesimal
    },
    publisher={Gauthier-Villars},
    address={Paris},
     date={1914},
     volume={2 and 3}
}

\bib{DelshamsFedRR2001}{article}{
    author={Delshams, A.},
    author={Fedorov, Y.},
    author={Ram\'irez-Ros, R.},
    title={Homoclinic billiard orbits inside symmetrically perturbed ellipsoids},
    journal={Nonlinearity},
    volume={14},
    date={2001},
    pages={1141--1195}
}

\bib{Drag2009}{article}{
   author={Dragovi\'c, Vladimir},
   title={Multivalued hyperelliptic continued fractions of generalized
   Halphen type},
   journal={Int. Math. Res. Not. IMRN},
   date={2009},
   number={10},
   pages={1891--1932}
}

\bib{DragRadn1998a}{article}{
    author={Dragovi\'c, Vladimir},
    author={Radnovi\'c, Milena},
    title={Conditions of Cayley's type for ellipsoidal billiard},
    journal={J. Math. Phys.},
    volume={39},
    date={1998},
    number={1},
    pages={355--362}
}

\bib{DragRadn1998b}{article}{
    author={Dragovi\'c, Vladimir},
    author={Radnovi\'c, Milena},
    title={Conditions of Cayley's type for ellipsoidal billiard},
    journal={J. Math. Phys.},
    volume={39},
    date={1998},
    number={11},
    pages={5866--5869}
}

\bib{DragRadn2004}{article}{
    author={Dragovi\'c, Vladimir},
    author={Radnovi\'c, Milena},
    title={Cayley-type conditions for billiards within $k$ quadrics in $\mathbf R^d$},
    journal={J. of Phys. A: Math. Gen.},
    volume={37},
    pages={1269--1276},
    date={2004}
}

\bib{DragRadn2006jmpa}{article}{
    author={Dragovi\'c, Vladimir},
    author={Radnovi\'c, Milena},
    title={Geometry of integrable billiards and pencils of quadrics},
    journal={Journal Math. Pures Appl.},
    volume={85},
    date={2006},
    pages={758--790}
}

\bib{DragRadn2008}{article}{
    author={Dragovi\'c, Vladimir},
    author={Radnovi\'c, Milena},
    title={Hyperelliptic Jacobians as Billiard Algebra of Pencils of Quadrics: Beyond Poncelet Porisms},
    journal={Adv.  Math.},
    volume={219},
    date={2008},
    number={5},
    pages={1577--1607}
}

\bib{DragRadn2011book}{book}{
    author={Dragovi\'c, Vladimir},
    author={Radnovi\'c, Milena},
    title={Poncelet Porisms and Beyond},
    publisher={Springer Birkhauser},
    date={2011},
    place={Basel}
}

\bib{DragRadn2014jmd}{article}{
	author={Dragovi\'c, Vladimir},
	author={Radnovi\'c, Milena},
	title={Pseudo-integrable billiards and arithmetic dynamics},
	date={2014},
	journal={Journal of Modern Dynamics},
	volume={8},
	number={1},
	pages={109--132}
}

\bib{DragRadn2019rcd}{article}{
	author={Dragovi\'{c}, Vladimir},
	author={Radnovi\'{c}, Milena},
	title={Caustics of Poncelet polygons and classical extremal polynomials},
	journal={Regul. Chaotic Dyn.},
	volume={24},
	date={2019},
	number={1},
	pages={1--35}
}

\bib{DuistermaatBOOK}{book}{
   author={Duistermaat, Johannes J.},
   title={Discrete integrable systems: QRT maps and elliptic surfaces},
   series={Springer Monographs in Mathematics},
   publisher={Springer},
   place={New York},
   date={2010},
   pages={xxii+627},
   isbn={978-1-4419-7116-6}
}

\bib{Fed2001}{article}{
    author={Fedorov, Yuri},
    title={An ellipsoidal billiard with quadratic potential},
    journal={Funct. Anal. Appl.},
    volume={35},
    date={2001},
    number={3},
    pages={199--208}
}

\bib{FlattoBOOK}{book}{
   author={Flatto, Leoplod},
   title={Poncelet's Theorem},
   publisher={AMS},
   date={2009},
 pages={240},
   isbn={ISBN: 978-0-8218-4375-8}
}

\bib{GrifHar1978}{article}{
    author={Griffiths, Philip},
    author={Harris, Joe},
    title={On Cayley's explicit solution to Poncelet's porism},
    journal={EnsFeign. Math.},
    volume={24},
    date={1978},
    number={1-2},
    pages={31--40}
}

\bib{Hal1888}{book}{
    author={Halphen, G.-H.},
    title={Trait\' e des fonctiones elliptiques et de leures applications},
    part={deuxieme partie},
    publisher={Gauthier-Villars et fils},
    address={Paris},
    date={1888}
}

\bib{Irie2012}{article}{
   author={Irie, Kei},
   title={Symplectic capacity and short periodic billiard trajectory},
   journal={Math. Z.},
   volume={272},
   date={2012},
   number={3-4},
   pages={1291--1320}
}

\bib{IT2017}{article}{
   author={Izmestiev, Ivan},
   author={Tabachnikov, Serge},
   title={Ivory's theorem revisited},
   journal={J. Integrable Syst.},
   volume={2},
   date={2017},
   number={1},
   pages={xyx006, 36}
}

\bib{JovJov2014}{article}{
	author={Jovanovi\'c, Bo\v zidar},
   author={Jovanovi\'c, Vladimir},
   title={Geodesic and billiard flows on quadrics in pseudo-Euclidean
   spaces: L--A pairs and Chasles theorem},
   journal={Int. Math. Res. Not. IMRN},
   date={2015},
   number={15},
   pages={6618--6638}
}

\bib{KhTab2009}{article}{
    author={Khesin, Boris},
    author={Tabachnikov, Serge},
    title={Pseudo-Riemannian geodesics and billiards},
    journal={Advances in Mathematics},
    volume={221},
    date={2009},
    pages={1364--1396}
}

\bib{KLN1990}{article}{
   author={Kre\u\i n, M. G.},
   author={Levin, B. Ya.},
   author={Nudel\cprime man, A. A.},
   title={On special representations of polynomials that are positive on a
   system of closed intervals, and some applications},
   note={Translated from the Russian by Lev J. Leifman and Tatyana L.
   Leifman},
   conference={
      title={Functional analysis, optimization, and mathematical economics},
   },
   book={
      publisher={Oxford Univ. Press, New York},
   },
   date={1990},
   pages={56--114}
}

\bib{LebCONIQUES}{book}{
    author={Lebesgue, Henri},
    title={Les coniques},
    publisher={Gauthier-Villars},
    address={Paris},
    date={1942}
}

\bib{Meiman1977}{article}{
   author={Meiman, N. N.},
   title={The theory of one-dimensional Schr\"odinger operators with a
   periodic potential},
   journal={J. Mathematical Phys.},
   volume={18},
   date={1977},
   number={4},
   pages={834--848}
}

\bib{PS1999}{article}{
   author={Peherstorfer, F.},
   author={Schiefermayr, K.},
   title={Description of extremal polynomials on several intervals and their
   computation. I, II},
   journal={Acta Math. Hungar.},
   volume={83},
   date={1999},
   number={1-2},
   pages={27--58, 59--83}
}

\bib{Poncelet1822}{book}{
    author={Poncelet, Jean Victor},
    title={Trait\'e des propri\'et\'es projectives des figures},
    publisher={Mett},
    address={Paris},
    date={1822}
}

\bib{PT2011}{article}{
   author={Popov, G.},
   author={Topalov, P.},
   title={On the integral geometry of Liouville billiard tables},
   journal={Comm. Math. Phys.},
   volume={303},
   date={2011},
   number={3},
   pages={721--759}
}

\bib{Radn2015}{article}{
    author={Radnovi\'c, Milena},
    title={Topology of the elliptical billiard with the Hooke's potential},
    journal={Theoretical and Applied Mechanics},
    date={2015},
    volume={42},
    number={1},
    pages={1--9}
}

\bib{RR2014}{article}{
   author={Ram\'\i rez-Ros, Rafael},
   title={On Cayley conditions for billiards inside ellipsoids},
   journal={Nonlinearity},
   volume={27},
   date={2014},
   number={5},
   pages={1003--1028}
}

\bib{Si2011}{book}{
    author={Simon, Barry},
    title={
    Szeg\"o's Theorem and its Descendants
    },
    publisher={Princeton University Press},
    address={Princetin and Oxford},
     date={2011}
}

\bib{SodinYu1992}{article}{
   author={Sodin, M. L.},
   author={Yuditski\u\i , P. M.},
   title={Functions that deviate least from zero on closed subsets of the
   real axis},
   language={Russian, with Russian summary},
   journal={Algebra i Analiz},
   volume={4},
   date={1992},
   number={2},
   pages={1--61},
   issn={0234-0852},
   translation={
      journal={St. Petersburg Math. J.},
      volume={4},
      date={1993},
      number={2},
      pages={201--249},
      issn={1061-0022},
   },
}

\bib{Sp1957}{book}{
   author={Springer, George},
   title={Introduction to Riemann Surfaces},
   publisher={AMS Chelsea Publishing},
   date={1957},
   pages={309}
}

\bib{Tcheb1852}{article}{
    author={Tchebycheff, P. L.},
    title={Report of the Extarordinary Professor of St Petersburg University Tchebycheff about the Trip Abroad},
    pages={246--255},
    date={1852},
    book={
        volume={5},
        title={Complete Collected Works},
        organization={AN SSSR},
        address={Moscow-Leningrad},
        date={1946--1951}
    }
}

\bib{Wiersig2000}{article}{
   author={Wiersig, Jan},
   title={Ellipsoidal billiards with isotropic harmonic potentials},
   journal={Internat. J. Bifur. Chaos Appl. Sci. Engrg.},
   volume={10},
   date={2000},
   number={9},
   pages={2075--2098}
}

\bib{WDullin2002}{article}{
    author={Waalkens, H.},
    author={Dullin, H. R.},
    title={Quantum Monodromy in Prolate Ellipsoidal Billiards},
    journal={Annals of Physics},
    volume={295},
    date={2002},
    number={1},
    pages={81--112}
}

\end{biblist}
\end{bibdiv}

\end{document}